\newlist{assumptions}{enumerate}{1} % Define a new list type 'assumptions'
\theoremstyle{theorem}
\newtheorem{theorem}{Theorem}[section]
\newtheorem{lemma}[theorem]{Lemma}
\newtheorem{proposition}[theorem]{Proposition}
\theoremstyle{definition}
\newtheorem{definition}[theorem]{Definition}
\newtheorem{remark}[theorem]{Remark}
\begin{document}

\title{Riemannian Interior Point Methods \\ for Constrained Optimization on Manifolds\footnote{An earlier version of this article has been circulated under the title ``Superlinear and Quadratic Convergence of Riemannian Interior Point Methods.''}}

\author{Zhijian Lai \and Akiko Yoshise}

%\institute{Zhijian Lai \at
%         University of Tsukuba,
%         Tsukuba, Ibaraki 305-8573, Japan,
%         galvin.lai@outlook.com
%         \and
%         Akiko Yoshise, Corresponding author \at
%         University of Tsukuba,
%         Tsukuba, Ibaraki 305-8573, Japan,
%         yoshise@sk.tsukuba.ac.jp}
\author{
	Zhijian Lai\thanks{
		University of Tsukuba, 
		Tsukuba, Ibaraki 305-8573, Japan.
		\href{mailto:galvin.lai@outlook.com}{galvin.lai@outlook.com}
	}
	\and Akiko Yoshise\thanks{
		Corresponding author. 
		University of Tsukuba, 
		Tsukuba, Ibaraki 305-8573, Japan.
		\href{mailto:yoshise@sk.tsukuba.ac.jp}{yoshise@sk.tsukuba.ac.jp}
	}
}

\date{Revised \today} %November 21, 2022

\maketitle

\begin{abstract}

We extend the classical primal-dual interior point method from the Euclidean setting to the Riemannian one. 
Our method, named the Riemannian interior point method, is for solving Riemannian constrained optimization problems. 
We establish its local superlinear and quadratic convergence under the standard assumptions.
Moreover, we show its global convergence when it is combined with a classical line search.
Our method is a generalization of the classical framework of primal-dual interior point methods for nonlinear nonconvex programming.
Numerical experiments show the stability and efficiency of our method.

\smallskip

\textbf{Communicated by Sándor Zoltán Németh}

\smallskip

\textbf{Keywords:} Riemannian manifolds, Riemannian optimization, Nonlinear optimization and Interior point method.

\end{abstract}

%\keywords{Riemannian manifolds \and 
%	Riemannian optimization \and 
%	Nonlinear optimization \and
%	Interior point method}
%	
%\subclass{65K05 \and 90C48}

\section{Introduction}

In this paper, we consider the following Riemannian Constrained Optimization Problem (RCOP):
\begin{equation}\tag{RCOP} \label{RCOP} 
	\begin{array}{ll}
		\underset{x \in \mathcal{M}}{\min} & f(x) \\
		\text { s.t. } & h(x)=0, \text{ and }  g(x) \leq 0,\\
	\end{array}
\end{equation}
where $\mathcal{M}$ is a connected, complete $d$-dimensional Riemannian manifold and $f \colon \mathcal{M}\to \mathbb{R}, h\colon\mathcal{M}\to \mathbb{R}^{l}$, and $g\colon\mathcal{M}\to \mathbb{R}^{m}$ are smooth functions.
This problem appears in many applications, for instance, matrix approximation with nonnegative constraints on a fixed-rank manifold \cite{Song2020} and orthogonal nonnegative matrix factorization on the Stiefel manifold \cite{Jiang2022}; for more, see \cite{liu2020simple,Obara2022}.

The body of knowledge on (\ref{RCOP}) without $ h,g$, often called simply \emph{Riemannian optimization}, has grown considerably in the last 20 years. 
The well-known methods in the Euclidean setting, such as steepest descent, Newton, and trust region, have been extended to the Riemannian setting \cite{absil2009optimization,hu2020brief,boumal2022intromanifolds}. 
By contrast, research on (\ref{RCOP}) is still in its infancy. 
The earliest studies go back to ones on the optimality conditions. Yang et al. \cite{yang2014optimality} extended the Karush Kuhn Tucker (KKT) conditions to (\ref{RCOP}). Bergmann and Herzog \cite{bergmann2019intrinsic} considered more Constraint Qualifications (CQs) on manifolds. 
Yamakawa and Sato \cite{yamakawa2022sequential} proposed sequential optimality conditions in the Riemannian case. 
Liu and Boumal \cite{liu2020simple} were the first to develop practical algorithms. 
They extended the augmented Lagrangian method and exact penalty method to (\ref{RCOP}). 
Schiela and Ortiz \cite{Schiela2021} and Obara et al. \cite{Obara2022} proposed the Riemannian sequential quadratic programming method. 
However, to our knowledge, interior point methods have yet to be considered for (\ref{RCOP}).

The advent of interior point methods in the 1980s greatly advanced the field of optimization \cite{wright1997primal,ye1997interior}. By the early 1990s, the success of these methods in linear and quadratic programming ignited interest in using them on nonlinear nonconvex cases \cite{el1996formulation,yamashita1996superlinear}. From the 1990s to the first decade of the 21st century, a large number of interior point methods for nonlinear programming emerged. They proved to be as successful as the linear ones. A subclass known as \emph{primal-dual} interior point methods is the most efficient and practical. As described in \cite{lustig1991computational}, the primal-dual approach to linear programming was introduced in \cite{megiddo1989pathways}: it was first developed as an algorithm in \cite{kojima1989primal} and eventually became standard for the nonlinear case as well \cite{el1996formulation,yamashita1996superlinear}.

\textbf{Contribution.}
In this paper, we extend the primal-dual interior point algorithms from the Euclidean setting, i.e., $\mathcal{M}=\mathbb{R}^{d}$ in (\ref{RCOP}), to the Riemannian setting. We call this extension the \emph{Riemannian Interior Point Method} (RIPM). 
Our contributions are summarized as follows:
\begin{enumerate}%[leftmargin=*]
	\item 
	To our knowledge, this is the first study to apply the primal-dual interior point method to the nonconvex constrained optimization problem on Riemannian manifolds. 
	One significant contribution is that we establish many essential foundational concepts for the general interior point method in the Riemannian context, such as the KKT vector field and its covariant derivative. 
	In addition, we build the first framework for the Riemannian version of the interior point method. 
	These contributions will have uses in the future, especially in developing more advanced interior point methods.
	
	\item 
	We give a detailed theoretical analysis to ensure local and global convergence of RIPM. Considering that many practical problems involve minimizing a nonconvex function on Riemannian manifolds, the theoretical counterparts of our method are the early interior point methods for nonlinear nonconvex programming first proposed by El-Bakry et al. \cite{el1996formulation}.
		
	\item 
	Our numerical experiments\footnote{The code is freely available at \url{https://doi.org/10.5281/zenodo.10612799}} demonstrate the great potential of RIPM. The method meets the challenges presented in these experiments with better stability and higher accuracy compared with the current Riemannian exact penalty, augmented Lagrangian, and sequential quadratic programming methods. 
\end{enumerate}

\textbf{Organization.} 
The rest of this paper is organized as follows. 
In Section \ref{sec:Notation}, we review the notation of Riemannian geometry and explain the Riemannian Newton method. 
In Section \ref{sec:Interpretation}, we give a full interpretation of our RIPM and describe a prototype algorithm of RIPM. We also investigate the use of Krylov subspace methods to efficiently solve a condensed form of a perturbed Newton equation. This is particularly important for the numerical implementation of RIPM.
Section \ref{sec:Preliminaries} gives the necessary preliminaries and auxiliary results needed to prove convergence in our subsequent sections.
In Section \ref{sec:local}, we give the proof of local superlinear and quadratic convergence of the prototype algorithm of RIPM. 
Section \ref{sec:global} describes a globally convergent version of RIPM with a classical line search; then, Section \ref{sec:ProofGlobal} proves its global convergence.
Section \ref{sec:ne} is a collection of numerical experiments. 
Section \ref{sec:Conclusion} summarizes our research and presents future work.

\section{Notation}\label{sec:Notation}

\subsection{Riemannian Geometry}

Let us briefly review some concepts from Riemannian geometry, following the notation of \cite{boumal2022intromanifolds}.
$\mathcal{M}$ denotes a finite-dimensional smooth manifold.
Let $p \in \mathcal{M}$ and $T_p \mathcal{M}$ be the tangent space at $p$ with $0_{p}$ being its zero element.
We use a canonical identification $ T_{p} \mathcal{E}\cong \mathcal{E}$ for a vector space $ \mathcal{E} $ and $p \in \mathcal{E}$.
A vector field is a map $V \colon \mathcal{M} \to T \mathcal{M}$ with $V(p) \in T_{p} \mathcal{M}$, where $T \mathcal{M}:=\bigcup_{p \in \mathcal{M}} T_p \mathcal{M}$ is the tangent bundle. 
$\mathfrak{X}(\mathcal{M})$ denotes the set of all smooth vector fields defined on $\mathcal{M}$.
Furthermore, $\mathcal{M}$ is a Riemannian manifold if it is equipped with a Riemannian metric, that is, a choice of inner product $\langle\cdot, \cdot\rangle_p \colon T_p \mathcal{M} \times T_p \mathcal{M} \to \mathbb{R} $ for each tangent space at $p$ on $\mathcal{M}$ such that for all $V, W \in \mathfrak{X}(\mathcal{M})$, the map
\begin{equation}\label{eq:RiemannianMetric}
	p \mapsto\langle V(p), W(p)\rangle_p
\end{equation}
is a smooth function from $\mathcal{M}$ to $\mathbb{R}$.
Riemannian metric induces the norm $\|\xi\|_p:=\sqrt{\langle\xi, \xi\rangle_p}$ for $\xi \in T_p \mathcal{M}$.
We often omit the subscript $p$ if it is clear from the context.
Throughout this paper, we assume that all the manifolds involved are \emph{connected} and \emph{complete}.
Given a curve segment on $\mathcal{M}$, $c:[a, b] \rightarrow \mathcal{M}$, the length of $c$ is defined as 
$
L(c):=\int_a^b \| \dot{c}(t)\|_{c(t)} \mathrm{d} t,
$
where $\dot{c}(t) \in T_{c(t)} \mathcal{M}$ is the velocity vector of $c$ at $t$.
Since $\mathcal{M}$ is connected, there exists a curve segment connecting any pair of points $p,q \in \mathcal{M}$.
Indeed, $\mathcal{M}$ is a metric space under the Riemannian distance $d(p, q):=\inf_{c} L(c) $ where the infimum is taken over all curve segments joining $p $ to $q $ \cite[Thm. 2.55]{lee2018}.
For two manifolds $\mathcal{M}_1, \mathcal{M}_2$ and a smooth map $F \colon \mathcal{M}_1 \to \mathcal{M}_2$, the differential of $F$ at $p \in \mathcal{M}_1$ is a linear operator denoted as $\mathcal{D} F(p) \colon T_p \mathcal{M}_1 \to T_{F(p)} \mathcal{M}_2$.
Let $\mathfrak{F}(\mathcal{M})$ be the set of all smooth \emph{scalar fields} (or say real-valued functions) $f\colon \mathcal{M} \to \mathbb{R}$.
The Riemannian gradient of $f$ at $p$, $\operatorname{grad} f(p)$, is defined as the unique element of $T_p \mathcal{M}$ that satisfies $\langle \xi, \operatorname{grad} f(p)\rangle_p = \mathcal{D} f(p)[\xi]$ for all $\xi \in T_p \mathcal{M}$, where $\mathcal{D} f(p) \colon T_p \mathcal{M} \to T_{f(p)} \mathbb{R} \cong \mathbb{R}$.
Note that for any $f \in \mathfrak{F}(\mathcal{M})$, the gradient vector field $x \mapsto \operatorname{grad} f(x)$ is a smooth vector field on $\mathcal{M}$, i.e., $\operatorname{grad} f \in \mathfrak{X}(\mathcal{M})$.
A retraction $\mathrm{R} \colon T \mathcal{M} \to \mathcal{M}$ is a smooth map such that
$\mathrm{R}_{p}\left(0_{p}\right)=p$ and $\mathcal{D} \mathrm{R}_{p}\left(0_{p}\right) =\operatorname{id}_{T_p \mathcal{M}}$, i.e., the identity map on $T_{p} \mathcal{M}$, where $\mathrm{R}_{p}$ is the restriction of $\mathrm{R}$ to $T_{p} \mathcal{M}$ and $\mathcal{D} \mathrm{R}_{p}\left(0_{p}\right) \colon T_{0_p} (T_p \mathcal{M}) \cong T_p \mathcal{M} \to T_{p} \mathcal{M}$ is the differential of $\mathrm{R}_{p}$ at $0_{p}$.
One theoretically perfect type of retraction is the exponential map, denoted as $\operatorname{Exp}$. Since $\mathcal{M}$ is complete, the exponential map is well-defined on the whole tangent bundle.
Let $\operatorname{Exp}_p\colon T_p \mathcal{M} \rightarrow \mathcal{M}$ be the exponential map at $p$; then $t \mapsto \operatorname{Exp}_p(t \xi)$ is the unique geodesic that passes through $p$ with velocity $\xi \in T_p \mathcal{M}$ when $t=0$.

\subsection{Riemannian Newton Method}\label{subsec:RiemannianNewton}

The Newton method is a powerful tool for finding the zeros of nonlinear functions in the Euclidean setting.
The generalized Newton method has been studied in the Riemannian setting; it aims to find a \emph{singularity} for the vector field $F \in \mathfrak{X}(\mathcal{M})$, i.e., a point $p \in \mathcal{M}$ such that,
\begin{equation}\label{eq:Singularity}
	F(p)=0_{p} \in T_p \mathcal{M}.
\end{equation}
Let $\nabla$ be the Levi-Civita connection on $\mathcal{M}$, i.e., the unique symmetric connection compatible with the Riemannian metric.
The covariant derivative $\nabla F$ assigns each point $p \in \mathcal{M}$ a linear operator $\nabla F(p)\colon T_{p}\mathcal{M}\to T_{p}\mathcal{M}$. 
In particular, the Riemannian Hessian of $f \in \mathfrak{F}(\mathcal{M})$ at $p$ is a self-adjoint operator on $T_{p} \mathcal{M}$, defined as $\operatorname{Hess} f(p):=\nabla (\operatorname{grad}f)(p).$
The Riemannian Newton method for solving (\ref{eq:Singularity}) is performed as Algorithm \ref{algo:StandardNewton}.

\begin{algorithm}%[H]
	\caption{Riemannian Newton Method for (\ref{eq:Singularity})}
	\label{algo:StandardNewton}
	\SetAlgoLined
	
	\KwIn{A vector field $F \in \mathfrak{X}(\mathcal{M})$, an initial point $p_0 \in \mathcal{M}$ and a retraction $\mathrm{R}$ on $\mathcal{M}$.}
	\KwOut{Sequence $\{p_k\} \subset \mathcal{M}$ such that $\left\{p_k\right\} \rightarrow p^*$ and $F(p^{*})=0_{p^{*}}$.}	
	
	Set $k \to 0$\;
	
	\While{Stopping criterion not satisfied}{
		1. Obtain $\xi_k \in T_{p_k}\mathcal{M}$ by solving the Newton equation (a linear operator equation on tangent space $T_{p_k}\mathcal{M}$): 
		\begin{equation}\label{eq:StandardNewtonEquation}
			\nabla F(p_k)[\xi_k] = -F(p_k);
		\end{equation}
	
		2. Compute the next point as $p_{k+1} := \mathrm{R}_{p_k}(\xi_k)$\;	
		
		3. $k \to k + 1$\;
	}
\end{algorithm}

\section{Description of Riemannian Interior Point Methods}
\label{sec:Interpretation}

In this section, we will give a comprehensive interpretation of the Riemannian interior point method.
Following common usage in the interior-point literature, big letters denote the associated diagonal matrix, e.g., $Z =\operatorname{diag}(z_{1}, \ldots, z_{m})$ with $z \in \mathbb{R}^{m}$. $e$ denotes the all-ones vector, and $0$ stands for zero vector/matrix with proper dimensions.

\subsection{KKT Vector Field and Its Covariant Derivative}

The Lagrangian function of (\ref{RCOP}) is
\begin{equation*}
	\mathcal{L}(x, y, z):=f(x)+\sum_{i=1}^l y_i h_i(x)+\sum_{i=1}^m z_i g_i(x),
\end{equation*}
where $y \in \mathbb{R}^l$ and $z \in \mathbb{R}^m$ are Lagrange multipliers corresponding to the equality and
inequality constraints, respectively.
With respect to the variable $x$, $\mathcal{L}(\cdot, y, z)$ is a real-valued function defined on $\mathcal{M}$, and its Riemannian gradient is
\begin{equation*}
\operatorname{grad}_{x} \mathcal{L}(x, y, z)=\operatorname{grad} f(x) + \sum_{i =1 }^{l} y_{i} \operatorname{grad} h_{i}(x) + \sum_{i =1 }^{m} z_{i} \operatorname{grad} g_{i}(x),
\end{equation*}
where $\{\operatorname{grad} h_{i}(x)\}_{i=1}^{l}$ and $ \{\operatorname{grad} g_{i}(x)\}_{i=1}^{m}$ are the gradients of the component functions of $h$ and $g$. 
The Riemannian KKT conditions (e.g., see \cite[Definition 2.3]{liu2020simple} or \cite{yang2014optimality}) for (\ref{RCOP}) are given by
\begin{equation}\label{eq:KKTConditions}
	\operatorname{grad}_{x} \mathcal{L}(x, y, z) =0_{x}; \;
	h(x) =0 ,
	g(x) \leq 0 , 
	z  \geq 0; \;
	Z g(x) =0.
\end{equation}
The above conditions can be written in terms of slack variables $\mathbb{R}^{m} \ni s:=-g(x)$, as
\begin{equation}\label{eq:KKTVectorField}
	F(w):=\left(\begin{array}{l}
		\operatorname{grad}_{x} \mathcal{L}(x, y, z) \\
		 h(x) \\
		g(x)+s \\
		Z S e
	\end{array}\right)=0_{w}
	=\left(\begin{array}{l}
		0_{x} \\
		0\\
		0\\
		0
	\end{array}\right) \in T_w \mathcal{N},
\end{equation}
and $(z,s) \geq 0,$
where $w:=(x, y, z,s) \in \mathcal{N}:=\mathcal{M}\times \mathbb{R}^{l} \times \mathbb{R}^{m} \times \mathbb{R}^{m}$.
Here, we have generated a vector field $F$ in (\ref{eq:KKTVectorField}) on the product manifold $\mathcal{N}$, i.e.,
for any $w\in \mathcal{N}$, 
$
F(w) \in
T_{w} \mathcal{N} \cong T_{x} \mathcal{M} \times \mathbb{R}^{l} \times \mathbb{R}^{m} \times \mathbb{R}^{m}.
$
We will call $F \colon \mathcal{N} \to T\mathcal{N} $ in (\ref{eq:KKTVectorField}) above the \emph{KKT vector field} for (\ref{RCOP}).
Note that for $\xi=\left(\xi_x, \xi_y, \xi_z, \xi_s\right) \in T_{w} \mathcal{N} $,
\begin{equation}\label{eq:barR}
	\bar{\mathrm{R}}_{w}(\xi):=(\mathrm{R}_{x}(\xi_{x}), y+\xi_{y}, z+\xi_{z}, s+\xi_{s})
\end{equation}
is a well-defined retraction on the product manifold $\mathcal{N}$ as long as $\mathrm{R}$ is a retraction on $\mathcal{M}$.

Now, we aim to find a singularity of the KKT vector field $F$ on $\mathcal{N}$ under some nonnegative constraints $(z,s) \geq 0$.
If the Riemannian Newton method is to be applied to (\ref{eq:KKTVectorField}), we must formulate the covariant derivative of $F$ at an arbitrary $w \in \mathcal{N}$. Before that, we need some new symbols.
Fixing a point $x \in \mathcal{M}$, we can define two linear operators $\mathcal{H}_{x}\colon \mathbb{R}^{l} \rightarrow T_{x} \mathcal{M}$ and $\mathcal{G}_{x}\colon \mathbb{R}^{m} \rightarrow T_{x} \mathcal{M}$ by
\begin{equation}\label{defn:HxGx}
	\mathcal{H}_x [v]:=\sum_{i=1}^l v_i \operatorname{grad} h_i(x), \quad \mathcal{G}_x [u]:=\sum_{i=1}^m u_i \operatorname{grad} g_i(x),
\end{equation}
respectively.
Then, for $\mathcal{H}_{x}$, its adjoint operator $\mathcal{H}_x^*\colon T_x \mathcal{M} \rightarrow \mathbb{R}^l$ is given by
\begin{equation*}
	\mathcal{H}_x^* [\xi]=\left(\left\langle\operatorname{grad} h_1(x), \xi\right\rangle_x, \cdots,\left\langle\operatorname{grad} h_l(x), \xi\right\rangle_x\right)^T.
\end{equation*}
Also, $\mathcal{G}_x^*\colon T_x \mathcal{M} \rightarrow \mathbb{R}^m$ with $\mathcal{G}_x^* [\xi]=\left(\left\langle\operatorname{grad} g_1(x), \xi\right\rangle_x, \cdots,\left\langle\operatorname{grad} g_m(x), \xi\right\rangle_x\right)^T$. 

Now, using the solutions of Exercises 5.4 and 5.13 on covariant derivatives of vector fields on product manifolds in monograph \cite{boumal2022intromanifolds}, the following results can be easily derived.
Given KKT vector field $F $ in (\ref{eq:KKTVectorField}) and $w \in \mathcal{N}$, the covariant derivative of $F$ at $w$ is the operator $\nabla F(w)\colon T_{w} \mathcal{N} \to T_{w} \mathcal{N}$ given by
\begin{equation}\label{eq:CovariantDerivativeKKTVectorField}
	\nabla F(w) [\Delta w] =
	\left(\begin{array}{l}
		\operatorname{Hess}_{x} \mathcal{L}(w) [\Delta x]
		+\mathcal{H}_{x} [\Delta y]
		+\mathcal{G}_{x} [\Delta z]\\
		\mathcal{H}_{x}^{*}[\Delta x]\\
		\mathcal{G}_{x}^{*}[\Delta x]+\Delta s\\
		Z \Delta s + S \Delta z
	\end{array}\right),
\end{equation}
where $\Delta w=(\Delta x, \Delta y, \Delta z, \Delta s) \in T_{w} \mathcal{N}$ and $\operatorname{Hess}_{x} \mathcal{L}(w)$ is the Riemannian Hessian of real-valued function $\mathcal{L}(\cdot, y, z)$.
Indeed,
$\operatorname{Hess}_{x} \mathcal{L}(w)\colon T_{x}\mathcal{M} \to T_{x}\mathcal{M}$ satisfies
\begin{equation}\label{eq:HessianLagrange}
	\operatorname{Hess}_{x} \mathcal{L}(w) =
	\operatorname{Hess} f(x)
	+\sum_{i=1}^{l} y_{i} \operatorname{Hess} h_{i}(x)
	+\sum_{i=1}^{m} z_{i} \operatorname{Hess} g_{i}(x),
\end{equation}
where $\{\operatorname{Hess} h_{i}(x)\}_{i=1}^{l}$ and $\{\operatorname{Hess} g_{i}(x)\}_{i=1}^{m}$ are Hessians of the component functions of $h$ and $g$.

\subsection{Implication of Standard Assumptions}

Next, we will discuss an important result about the covariant derivative of the KKT vector field given in (\ref{eq:CovariantDerivativeKKTVectorField}).
Let $\mathbb{A}(x):=\left\{i \mid g_{i}(x)=0\right\}$ denote the active set at $x \in \mathcal{M}$.
A prior study on Riemannian optimality conditions \cite{yang2014optimality,bergmann2019intrinsic} showed that the following assumptions are meaningful.
We call them the \emph{standard Riemannian assumptions} for (\ref{RCOP}).
Note that the $x^{*}$ and $w^*$ in \ref{B3}-\ref{B5} all refer to those in \ref{B1}.

\setlist[assumptions,1]{label=\textbf{(A\arabic*)}}
\begin{assumptions}[leftmargin=9mm]
	\item {Existence.} There exists $w^*=(x^{*}, y^{*}, z^{*}, s^{*})$ satisfying the Riemannian KKT conditions (\ref{eq:KKTConditions}). Here, we introduce the slack variables $s^{*}:=-g(x^{*})$.
	\label{B1}	
%	\item {Smoothness of $ C^{2} $.} The functions $f, g,$ and $h$ are $ C^{2} $ on $\mathcal{M}$.
%	\label{B2}	
	\item {Linear Independence Constraint Qualification (LICQ).} \\ The set $\left\{\operatorname{grad} h_{i}(x^{*})\right\}_{i=1}^{l} \bigcup \left\{\operatorname{grad} g_{i}(x^{*})\right\}_{i \in \mathbb{A}(x^{*})}$ is linearly independent in $ T_{x^{*}}\mathcal{M}$.
	\label{B3}	
	\item {Strict complementarity.} $(z^{*})_{i}>0$ if $g_{i}(x^{*})=0$ for all $i=1, \cdots,m$.
	\label{B4}	
	\item {Second-order sufficiency.}
	$\left \langle\operatorname{Hess}_{x} \mathcal{L}(w^{*})[\xi], \xi\right\rangle>0$ for all nonzero $\xi \in T_{x^{*}}\mathcal{M}$ \\ satisfying $\left\langle\xi, \operatorname{grad} h_{i}(x^{*})\right\rangle=0 $ for $i=1,\dots,l,$ and
	$\left\langle\xi, \operatorname{grad} g_{i}(x^{*})\right\rangle=0$ for $ i \in \mathbb{A}(x^{*})$. 
	\label{B5}
\end{assumptions}

Recall the Riemannian Newton method discussed in Section \ref{subsec:RiemannianNewton}.
It can be shown that if $p^*$ is a solution of equation (\ref{eq:Singularity}) and the covariant derivative $\nabla F\left(p^*\right)$ is \emph{nonsingular}, then Algorithm \ref{algo:StandardNewton} has the local superlinear convergence \cite{fernandes2017superlinear} and local quadratic convergence \cite{ferreira2002kantorovich} under certain mild conditions on the map $p \mapsto \nabla F(p)$.
Note that the requirement of nonsingularity for the covariant derivative at the solution point is of primary importance.
Therefore, the next theorem motivates us to use the Riemannian Newton method to solve (\ref{eq:KKTVectorField}).

\begin{theorem}\label{prop:non}
	If the standard Riemannian assumptions \ref{B1}-\ref{B5} hold at some point $w^{*}$,
	then the operator $\nabla F(w^{*})$ in (\ref{eq:CovariantDerivativeKKTVectorField}) is nonsingular.
\end{theorem}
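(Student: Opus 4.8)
The plan is to show that the linear operator $\nabla F(w^{*})\colon T_{w^{*}}\mathcal{N}\to T_{w^{*}}\mathcal{N}$ has trivial kernel, i.e., that $\nabla F(w^{*})[\Delta w]=0$ forces $\Delta w=0$. Writing $\Delta w=(\Delta x,\Delta y,\Delta z,\Delta s)$ and reading off the four block equations from (\ref{eq:CovariantDerivativeKKTVectorField}), the key is the last block $Z^{*}\Delta s+S^{*}\Delta z=0$ together with the complementarity structure at $w^{*}$: since $s^{*}_i=-g_i(x^{*})$, strict complementarity \ref{B4} says that for each index $i$ exactly one of $z^{*}_i,s^{*}_i$ is positive and the other is zero (for $i\in\mathbb{A}(x^{*})$ we have $z^{*}_i>0,s^{*}_i=0$; for $i\notin\mathbb{A}(x^{*})$ we have $s^{*}_i>0,z^{*}_i=0$ because $z^{*}\geq 0$ and $z^{*}_i g_i(x^{*})=0$). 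Hence the equation $z^{*}_i\Delta s_i+s^{*}_i\Delta z_i=0$ splits into: $\Delta s_i=0$ for $i\in\mathbb{A}(x^{*})$ and $\Delta z_i=0$ for $i\notin\mathbb{A}(x^{*})$.

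Next I would exploit the third block $\mathcal{G}_{x^{*}}^{*}[\Delta x]+\Delta s=0$, which gives $\Delta s=-\mathcal{G}_{x^{*}}^{*}[\Delta x]$, i.e., $\Delta s_i=-\langle\operatorname{grad}g_i(x^{*}),\Delta x\rangle$ for all $i$. Combined with $\Delta s_i=0$ for $i\in\mathbb{A}(x^{*})$, this yields $\langle\operatorname{grad}g_i(x^{*}),\Delta x\rangle=0$ for every active $i$; also $\mathcal{H}_{x^{*}}^{*}[\Delta x]=0$ from the second block gives $\langle\operatorname{grad}h_i(x^{*}),\Delta x\rangle=0$ for all $i$. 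Thus $\Delta x$ lies in exactly the subspace appearing in the second-order sufficiency condition \ref{B5}. Now I would test the first block with $\Delta x$: from $\operatorname{Hess}_x\mathcal{L}(w^{*})[\Delta x]+\mathcal{H}_{x^{*}}[\Delta y]+\mathcal{G}_{x^{*}}[\Delta z]=0$, take the Riemannian inner product with $\Delta x$. The terms $\langle\mathcal{H}_{x^{*}}[\Delta y],\Delta x\rangle=\langle\Delta y,\mathcal{H}_{x^{*}}^{*}[\Delta x]\rangle=0$ and $\langle\mathcal{G}_{x^{*}}[\Delta z],\Delta x\rangle=\sum_i\Delta z_i\langle\operatorname{grad}g_i(x^{*}),\Delta x\rangle$; for $i\notin\mathbb{A}(x^{*})$ we showed $\Delta z_i=0$, and for $i\in\mathbb{A}(x^{*})$ we showed $\langle\operatorname{grad}g_i(x^{*}),\Delta x\rangle=0$, so this term vanishes too. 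Hence $\langle\operatorname{Hess}_x\mathcal{L}(w^{*})[\Delta x],\Delta x\rangle=0$, and \ref{B5} forces $\Delta x=0$.

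With $\Delta x=0$ in hand, the third block gives $\Delta s=-\mathcal{G}_{x^{*}}^{*}[0]=0$. It remains to show $\Delta y=0$ and $\Delta z=0$. From the first block with $\Delta x=0$ we get $\mathcal{H}_{x^{*}}[\Delta y]+\mathcal{G}_{x^{*}}[\Delta z]=0$, i.e., $\sum_i\Delta y_i\operatorname{grad}h_i(x^{*})+\sum_i\Delta z_i\operatorname{grad}g_i(x^{*})=0$. We already know $\Delta z_i=0$ for $i\notin\mathbb{A}(x^{*})$, so this is a vanishing linear combination of $\{\operatorname{grad}h_i(x^{*})\}_{i=1}^{l}\cup\{\operatorname{grad}g_i(x^{*})\}_{i\in\mathbb{A}(x^{*})}$; by LICQ \ref{B3} all coefficients vanish, so $\Delta y=0$ and $\Delta z_i=0$ for active $i$ as well, whence $\Delta z=0$. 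Therefore $\Delta w=0$ and $\nabla F(w^{*})$ is injective; since $T_{w^{*}}\mathcal{N}$ is finite-dimensional, it is nonsingular.

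\emph{Main obstacle.} None of the individual steps is deep — the argument is the Riemannian transcription of the classical Euclidean nonsingularity proof for the primal-dual KKT Jacobian. The one point requiring care is keeping the adjoint relations $\langle\mathcal{H}_{x^{*}}[v],\xi\rangle_{x^{*}}=\langle v,\mathcal{H}_{x^{*}}^{*}[\xi]\rangle$ (and similarly for $\mathcal{G}$) straight, since here the inner product on the $T_{x^{*}}\mathcal{M}$ side is the Riemannian metric rather than the Euclidean one; but these are exactly the identities recorded right after (\ref{defn:HxGx}), so the obstacle is merely bookkeeping rather than anything substantive.
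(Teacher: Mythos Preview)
Your proposal is correct and follows essentially the same argument as the paper's proof: both show the kernel of $\nabla F(w^{*})$ is trivial by using strict complementarity to split the indices, then the second and third blocks to place $\Delta x$ in the critical cone, second-order sufficiency to force $\Delta x=0$, and finally LICQ to kill $\Delta y$ and the remaining components of $\Delta z$. The only cosmetic difference is that the paper writes the inner-product step directly on the expanded sums rather than via the adjoint identities for $\mathcal{H}_{x^{*}}$ and $\mathcal{G}_{x^{*}}$, but this is the same computation.
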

\begin{proof}
This proof omits all the asterisks of the variables. Define $ \mathbb{E}:=\{1,\dots,l\}$ and $\mathbb{I}:=\{1,\dots,m\} $. Take some $w=(x, y, z, s) \in \mathcal{N} $ satisfying \ref{B1}-\ref{B5}, then we have $ s_{i} = -g_{i}(x)$ and $z_{i} s_{i} = 0$ for all $ i \in \mathbb{I}$. For short, let $\mathbb{A}:=\mathbb{A}(x) \subset \mathbb{I} $. 
Suppose that $\nabla {F}(w) [\Delta w] =0$ for some $\Delta w = (\Delta x, \Delta y, \Delta z, \Delta s) \in T_{w} \mathcal{N} \cong T_x \mathcal{M} \times \mathbb{R}^l \times \mathbb{R}^m \times \mathbb{R}^m$.
$\Delta y_i$ denotes the components of the vector $\Delta y$, as do $\Delta z_i$, $\Delta s_i$.
To prove its nonsingularity, we will show that $ \Delta w=0$.
Expanding the equation $\nabla {F}(w) [\Delta w] =0$ gives
\begin{equation}\label{13}
	\left\{
	\begin{aligned}
		0&= \operatorname{Hess}_{x} \mathcal{L}(w) [\Delta x]
		+\sum_{i\in \mathbb{E}} \Delta y_{i} \operatorname{grad} h_{i}(x)
		+\sum_{i\in \mathbb{I}} \Delta z_{i} \operatorname{grad} g_{i}(x),\\
		0&=\left\langle\operatorname{grad}h_{i}(x), \Delta x\right\rangle, \text { for all } i\in \mathbb{E},\\
		0&=\left\langle\operatorname{grad} g_{i}(x), \Delta x\right\rangle + \Delta s_{i}, \text { for all } i\in \mathbb{I}, \\
		0&=z_{i} \Delta s_{i} + s_{i} \Delta z_{i}, \text { for all } i \in \mathbb{I}. 
	\end{aligned}
	\right.
\end{equation}
Strict complementarity \ref{B4} and the last equalities above imply that $ \Delta s_{i}=0 $
for all $ i \in \mathbb{A}$ and $ \Delta z_{i}=0 $
for $ i \in  \mathbb{I} \setminus \mathbb{A}$.
Substituting those values into the system (\ref{13}) reduces it to
\begin{equation}\label{sing1}
	\left\{
	\begin{aligned}
		0&= \operatorname{Hess}_{x} \mathcal{L}(w) [\Delta x]
		+\sum_{i\in \mathbb{E}} \Delta y_{i} \operatorname{grad} h_{i}(x)
		+\sum_{i\in \mathbb{A}} \Delta z_{i} \operatorname{grad} g_{i}(x), \\
		0&=\left\langle\operatorname{grad}h_{i}(x), \Delta x\right\rangle, \text { for all } i\in \mathbb{E},\\
		0&=\left\langle\operatorname{grad}g_{i}(x), \Delta x\right\rangle, \text { for all } i\in \mathbb{A},
	\end{aligned}
	\right.
\end{equation}
and
$
\Delta s_{i}=-\left\langle\operatorname{grad} g_{i}(x), \Delta x\right\rangle \text { for all } i\in \mathbb{I} \setminus \mathbb{A}.
$
It follows from system (\ref{sing1}) that
\begin{align*}
	0
	&=  \langle \operatorname{Hess}_{x} \mathcal{L}(w) [\Delta x]
	+\sum_{i\in \mathbb{E}} \Delta y_{i} \operatorname{grad} h_{i}(x)
	+\sum_{i\in  \mathbb{A}} \Delta z_{i} \operatorname{grad} g_{i}(x) , \Delta x \rangle \\
	&= \left\langle\operatorname{Hess}_{x} \mathcal{L}(w) [\Delta x], \Delta x\right\rangle
	+\sum_{i\in \mathbb{E}} \Delta y_{i} \left\langle\operatorname{grad}h_{i}(x), \Delta x\right\rangle
	+\sum_{i\in  \mathbb{A}} \Delta z_{i} \left\langle\operatorname{grad}g_{i}(x), \Delta x\right\rangle \\
	&= \left\langle\operatorname{Hess}_{x} \mathcal{L}(w) [\Delta x], \Delta x\right\rangle.
\end{align*}
Thus, from second-order sufficiency \ref{B5}, $ \Delta x$ must be zero elements.
And then $\Delta s_{i}=0 \text { for all } i\in \mathbb{I} \setminus  \mathbb{A}.$
Next, substituting $ \Delta x=0$ into the first equation in (\ref{sing1}) yields
$
0=\sum_{i\in \mathbb{E}} \Delta y_{i} \operatorname{grad} h_{i}(x)
+\sum_{i\in  \mathbb{A}} \Delta z_{i} \operatorname{grad} g_{i}(x).
$
The LICQ \ref{B3} implies that the coefficients $\Delta y_{i}$ for $i\in \mathbb{E}$ and $ \Delta z_{i} $ for $ i\in  \mathbb{A} $ must be zero. 
This completes the proof.
\end{proof}

\subsection{Prototype Algorithm of RIPM}

Applying the Riemannian Newton method directly to the KKT vector field $F: \mathcal{N} \rightarrow T \mathcal{N}$ results in the following Newton equation (see (\ref{eq:StandardNewtonEquation}) without iteration count $k$) at each iteration:
\begin{equation}\label{eq:OriginalNewtonEquation}
	\nabla F(w) [\Delta w]+F(w)=0.
\end{equation}
As with the usual interior point method in the Euclidean setting, once the iterates reach the boundary of the feasible region, they are forced to stick to it \cite[P6]{wright1997primal}.
For the iterates to maintain a sufficient distance from the boundary, we introduce a perturbed complementary equation with some barrier parameter $\mu>0$ and define the \textit{perturbed KKT vector field}:
\begin{equation}\label{eq:perturbedKKTvectorfield}
	F_{\mu}(w):=F(w)-\mu \hat{e}, \text { and } \hat{e}:=\hat{e}(w):=\left(0_x, 0,0, e\right).
\end{equation}
Notice that the perturbation term $\hat{e}$, indeed, is a special vector field on $\mathcal{N}$, not a constant, because $0_x$ is essentially dependent on $w$ and/or $x$.
In fact, the covariant derivative of the perturbed KKT vector field is the same as that of the original. From the linearity of the connection $\nabla$, for any $w \in \mathcal{N}$ and any $\mu>0$, we have 
\begin{equation}\label{eq:NabulaperturbedKKTvectorfield}
	\nabla F_{\mu}(w)=\nabla F(w)-\mu \nabla \hat{e}(w)=\nabla F(w),
\end{equation}
where the last equity comes from $\nabla \hat{e}(w) [\Delta w]=\left(0_x, 0,0,0\right)$ for all $\Delta w \in T_w \mathcal{N}$.
Applying the Riemannian Newton method to perturbed KKT vector field $F_{\mu}(w)$ yields the \emph{perturbed Newton equation}: $\nabla F_{\mu}(w) [\Delta w]+F_{\mu}(w)=0$. 
From (\ref{eq:perturbedKKTvectorfield}) and (\ref{eq:NabulaperturbedKKTvectorfield}), this equation is equivalent to 
\begin{equation}
	\nabla F(w) [\Delta w]+F(w)=\mu \hat{e},
\end{equation}
which reduces to the ordinary Newton equation (\ref{eq:OriginalNewtonEquation}) as $\mu \to 0$.
At this point, we can describe a prototype of the Riemannian Interior Point Method (RIPM) in Algorithm \ref{algo:prototype_algo}.

\begin{algorithm}%[H]
	\caption{Prototype Algorithm of RIPM for (\ref{RCOP})}
	\label{algo:prototype_algo}
	\SetAlgoLined
	
	\KwIn{An initial point $w_0=\left(x_0, y_0, z_0, s_0\right) \in \mathcal{N}$ with $\left(z_0, s_0\right)>0$ and a retraction $\mathrm{R}$ on $\mathcal{M}$. $\hat{\gamma} \in(0,1)$, $\mu_0>0$.}
	\KwOut{Sequence $\left\{w_k\right\} \subset \mathcal{N}$ such that $\left\{w_k\right\} \rightarrow w^*$ and $w^*$ satisfies the KKT conditions (\ref{eq:KKTConditions}).} % 
	
	Set $k \to 0$\;
	
	\While{Stopping criterion not satisfied}{
		1. Obtain $\Delta w_{k}=(\Delta x_{k}, \Delta y_{k}, \Delta z_{k},\Delta s_{k}) \in T_{w_k} \mathcal{N}$ by solving the perturbed Newton equation on $T_{w_k} \mathcal{N}$:
		\begin{equation}\label{eq:prototype}
			\nabla F(w_{k}) [\Delta w_{k}]=-F(w_{k})+\mu_{k} \hat{e};
		\end{equation}
	
		2. Choose $ \gamma_{k} \in [\hat{\gamma},1]$ and compute the step size $\alpha_{k}$ defined by
		\begin{equation}\label{rule:StepSizeA}
			\min \left\{1, \gamma_{k} \min _{i}\left\{-\frac{(s_{k})_{i}}{(\Delta s_{k})_{i}} \mid(\Delta s_{k})_{i}<0\right\},\gamma_{k} \min _{i}\left\{-\frac{(z_{k})_{i}}{(\Delta z_{k})_{i}} \mid(\Delta z_{k})_{i}<0\right\}\right\};
		\end{equation}
	
		3. Compute the next point as
		$w_{k+1}:=\bar{\mathrm{R}}_{w_k}\left(\alpha_k \Delta w_k\right)$, see (\ref{eq:barR})\;
		
		4. Choose $0<\mu_{k+1}<\mu_k$\;
		
		5. $k \to k+1$\;
	}
\end{algorithm}

In the step 2 of Algorithm \ref{algo:prototype_algo}, we just wish to compute a step size $0<\alpha_k \leq 1$ to ensure new point $w_{k+1}$ with $\left(z_{k+1},s_{k+1}\right)>0$. 
There are many schemes to achieve this purpose. 
The scheme in (\ref{rule:StepSizeA}) is simple but sufficient to guarantee the local convergence, as will be proved in Section \ref{sec:local}.

\subsection{Solving Perturbed Newton Equation Efficiently}

The challenge of Algorithm \ref{algo:prototype_algo} is how to solve the Newton equation (\ref{eq:prototype}) in an efficient manner.
In this subsection, we will do this in two steps: the first step will be to turn the original full Newton equation, which is asymmetric and consists of four variables, into a condensed form, which is symmetric and consists of only two variables. In the second step, an iterative method, namely, the Krylov subspace method, is used to solve the \emph{operator equations} directly, avoiding the expensive computational effort of converting them into the usual \emph{matrix equations}.

\subsubsection{Condensed Form of Perturbed Newton Equation}

Let us consider Algorithm \ref{algo:prototype_algo} and omit the iteration count $k$.
Given the current point $w \in \mathcal{N}$ with $(z, s) > 0$, for the KKT vector field $F(w)$ in (\ref{eq:KKTVectorField}), we denote its components by $F_x,F_y,F_z,F_s$ in top-to-bottom order, namely,
$F_x:=\operatorname{grad}_x \mathcal{L}(w),F_y:=h(x),F_z:=g(x)+s,F_s:=Z S e.$
By using these symbols and the formulation of $\nabla F(w)$ in (\ref{eq:CovariantDerivativeKKTVectorField}), the \emph{full} (perturbed) Newton equation (\ref{eq:prototype}) defined on $T_w \mathcal{N} \cong T_x \mathcal{M} \times \mathbb{R}^l \times \mathbb{R}^m \times \mathbb{R}^m$ is expanded as:
\begin{equation}\label{eq:FullNewtonEquation}
	\left(\begin{array}{l}
		\operatorname{Hess}_{x} \mathcal{L}(w) [\Delta x]
		+\mathcal{H}_{x} [\Delta y]
		+\mathcal{G}_{x} [\Delta z]\\
		\mathcal{H}_{x}^{*}[\Delta x]\\
		\mathcal{G}_{x}^{*}[\Delta x]+\Delta s\\
		Z \Delta s + S \Delta z
	\end{array}\right)
	=
	\left(\begin{array}{l}
		-F_x\\
		-F_y\\
		-F_z\\
		-F_s+\mu e
	\end{array}\right).
\end{equation}
Not only does this equation contain four variables, but there is no symmetry on the left side of the equation, so it would be unwise to solve it just like that.
By $(z, s) > 0$ and the fourth line of (\ref{eq:FullNewtonEquation}), we deduce
$
\Delta s=Z^{-1}\left(\mu e -F_s -S \Delta z \right).
$
Substituting this $\Delta s$ into the third line of (\ref{eq:FullNewtonEquation}), we get
$ \mathcal{G}_{x}^{*}[\Delta x]-Z^{-1} S \Delta z = -Z^{-1} \mu e-g(x)$ and thus $ \Delta z=S^{-1}\left[Z\left(\mathcal{G}_{x}^{*}[\Delta x]+F_z\right)+\mu e -F_s\right] $.
Substituting this $\Delta z$ further into the first line of (\ref{eq:FullNewtonEquation}) and combining it with the second line of (\ref{eq:FullNewtonEquation}) yields the following \emph{condensed} Newton equation, which is defined on $T_{x} \mathcal{M} \times \mathbb{R}^{l}$:
\begin{equation}\label{eq:CondensedEquation}
	\mathcal{T}(\Delta x,\Delta y):=
	\left(\begin{array}{l}
		\mathcal{A}_{w} [\Delta x]
		+\mathcal{H}_{x} [\Delta y]\\
		\mathcal{H}_{x}^{*} [\Delta x]
	\end{array}\right)
	=
	\left(\begin{array}{l}
		c\\
		q
	\end{array}\right).
\end{equation}
where
\begin{equation}\label{eq:SemiEquationWhere}
	\begin{aligned}
		\mathcal{A}_{w}&:=\operatorname{Hess}_{x} \mathcal{L}(w)+
		\mathcal{G}_{x} S^{-1} Z \mathcal{G}_{x}^{*}, \\
		c&:=
		-F_x- \mathcal{G}_{x} S^{-1}\left( Z F_z+\mu e-F_s\right),
		q:=-F_y.
	\end{aligned}
\end{equation}
Here, $c$ and $q$ are constant vectors. If we defined $\Theta:=\mathcal{G}_x S^{-1} Z \mathcal{G}_x^*$, then $\mathcal{A}_{w}=\operatorname{Hess}_{x} \mathcal{L}(w)+ \Theta.$
Note that both $\Theta$ and $\operatorname{Hess}_{x} \mathcal{L}(w)$ are operators from and to $T_x \mathcal{M}.$

From the discussion above, for any $w \in \mathcal{N}$ with $(z, s)>0$, the operator $\nabla F(w)$ in (\ref{eq:CovariantDerivativeKKTVectorField}) is nonsingular if and only if the newly defined operator $\mathcal{T}$ in (\ref{eq:CondensedEquation}) is nonsingular.
Eventually, it is sufficient for us to solve the equation (\ref{eq:CondensedEquation}) containing only two variables $\Delta x$ and $\Delta y$.
In fact, when we consider the case of only inequality constraints in (\ref{RCOP}), then $\Delta y$ vanishes, and only a linear equation 
\begin{equation}\label{eq:OnlyInequality}
	\mathcal{A}_w [\Delta x]=c
\end{equation}
on $T_{x} \mathcal{M}$ needs to be solved.
More importantly, the operator $\mathcal{T}$ in the left side of (\ref{eq:CondensedEquation}) is symmetric, or say self-adjoint (although often indefinite).
It is trivial to check that operators $\Theta $ and $\mathcal{A}_{w}$ are self-adjoint on $T_{x} \mathcal{M}$; and thus $\mathcal{T}$ is self-adjoint on the product vector space $T_{x} \mathcal{M} \times \mathbb{R}^{l}$ equipped with inner product
$\left\langle(\xi_x, \xi_y),(\eta_x, \eta_y)\right\rangle:=\left\langle \xi_x, \eta_x\right\rangle_x+ \xi_y ^T \eta_y$.
We can also see that (\ref{eq:CondensedEquation}) is a saddle point problem defined on Hilbert spaces form its special structure. 

\subsubsection{Krylov Subspace Methods on Tangent Space}
\label{ssec:Krylov}
Next, how to solve (\ref{eq:CondensedEquation}) efficiently becomes critical.
For simplicity, we consider the case of only inequality constraints in (\ref{RCOP}), then we will solve operator equation (\ref{eq:OnlyInequality}) with a self-adjoint operator $\mathcal{A}_w \colon T_x \mathcal{M} \to T_x \mathcal{M}$.
Let $d:=\operatorname{dim} T_x \mathcal{M}$.
Unfortunately, in most cases of practical applications, the Riemannian situation leaves us with no explicit matrix form available for $\mathcal{A}_w$.
This means that we can only access $\mathcal{A}$ (subscript $w$ omitted) by inputting a vector $v$ to return $\mathcal{A}v$.

A general approach is first to find the matrix representation $\hat{\mathcal{A}}$ for $\mathcal{A}$ under some basis of $T_x \mathcal{M}$. 
In detail, the full process of this approach is as follows:
\begin{enumerate}[label=\textbf{(Step \arabic*)}, leftmargin=14mm]
	\item Obtain $d$ random independent vectors on $T_x \mathcal{M}$.
	% (it often needs $d$ orthogonal projection operations onto subspace $T_x \mathcal{M}$ when $\mathcal{M}$ is a Riemannian submanifold, e.g., \texttt{M.randvec} in Manopt)
	\item Obtain an orthonormal basis $\left\{u_i\right\}_{i=1}^{d}$ of $T_x \mathcal{M}$ by the modified Gram-Schmidt algorithm.
	\item Compute $[\hat{\mathcal{A}}]_{i j}:=\left\langle\mathcal{A} u_j, u_i\right\rangle_x$ for $1 \leq i \leq j \leq d$ due to symmetry, then we obtain the matrix representation $\hat{\mathcal{A}} \in \mathbb{R}^{d \times d}$.
	\item Compute $[\hat{c}]_{i}:=\left\langle c, u_i \right\rangle_x$ for $1 \leq i \leq d$, then we obtain the vector representation $\hat{c} \in \mathbb{R}^{d}$.
	\item Using arbitrary linear solver to get solution $\Delta \hat{x} \in \mathbb{R}^d$ from matrix equation
	$\hat{\mathcal{A}} \Delta \hat{x}=\hat{c}$.
	\item Recovery the tangent vector $\Delta x \in T_x \mathcal{M}$ by $\Delta x = \sum_{i=1}^{d} (\Delta \hat{x})_i u_i$.
\end{enumerate}
In Algorithm \ref{algo:prototype_algo}, at each iteration, $x$ is updated, and thus, the tangent space $T_x \mathcal{M}$ changes. Thus, the above six steps need to be done all over again. Obviously, this approach is so expensive that it is not feasible in practice.

An ideal approach is to use an iterative method, such as the Krylov subspace method (e.g., conjugate gradients method \cite[Chapter 6.3]{boumal2022intromanifolds}), on $T_x \mathcal{M}$ directly. Such a method does not explicitly require a matrix representation $\hat{\mathcal{A}}$ for $\mathcal{A}$. In general, it only needs to call an abstract linear operator $v \mapsto \mathcal{A} v$. 
Since $\mathcal{A}$ in (\ref{eq:SemiEquationWhere}) is self-adjoint but indefinite, for solving operator equation (\ref{eq:OnlyInequality}), we will use the Conjugate Residual (CR) method (see \cite[ALGORITHM 6.20]{saad2003}) as stated in Algorithm \ref{algo:ConjugateResidualTangentSpace}.

\begin{algorithm}%[H]
	\caption{Conjugate Residual (CR) Method on Tangent Spaces for (\ref{eq:OnlyInequality})}
	\label{algo:ConjugateResidualTangentSpace}
	\SetAlgoLined
	% https://en.wikipedia.org/wiki/Conjugate_residual_method
	% Yousef Saad - Iterative Methods for Sparse Linear Systems,
	% Second Edition-SIAM (2003) P203. ALGORITHM 6.20
	
	\KwIn{Symmetric invertible linear operator $\mathcal{A}: T_x \mathcal{M} \to T_x \mathcal{M}$, nonzero $c \in T_x \mathcal{M}$ and an initial point $v_0 \in T_x \mathcal{M}$.}
	%	 Some threshold $\epsilon>0$.
	\KwOut{Sequence $\left\{v_n\right\} \subset T_x \mathcal{M}$ such that $\left\{v_n\right\} \to v^{*}$ and  $\mathcal{A} v^{*}=c$.}
	
	% v_0=0
	Set $n \to 0$, $r_0:=c-A v_0, p_0:=r_0$ and compute $\mathcal{A}r_0, \mathcal{A}p_0$ \;
	
	%	\While{$ \|r_{n}\|/\| c\| \geq \epsilon $ }{
		\While{stopping criterion not satisfied}{
			1. Update number $\alpha_n := \langle r_n, \mathcal{A} r_n \rangle _{x} / \langle \mathcal{A}p_n, \mathcal{A}p_n \rangle _{x} $ \tcp*{Step length}
			2. Set $v_{n+1} := v_n + \alpha_n p_n$ \tcp*{Iterate point}
			3. Update $r_{n+1} := r_n - \alpha_n \mathcal{A} p_n$ \tcp*{Residual} 
			4. Compute $\mathcal{A} r_{n+1}$ \tcp*{This is the only call to $\mathcal{A}$ in while loop}
			5. Update number $\beta_n := \langle r_{n+1}, \mathcal{A} r_{n+1} \rangle _{x} / \langle r_n, \mathcal{A} r_n \rangle _{x} $ \;
			6. Set $p_{n+1} := r_{n+1} + \beta_n p_n$ \tcp*{Conjugate direction}
			7. Compute $\mathcal{A} p_{n+1} :=\mathcal{A} r_{n+1}+\beta_n \mathcal{A} p_n$ \tcp*{No need to call $\mathcal{A}$ here}
			8. $n \to n + 1$\;
		}
\end{algorithm}
A significant feature is that the iterate points $v_k$, conjugate directions $p_k$, and residual vectors $r_k:=\mathcal{A} v_k-c$ are all contained in $T_x \mathcal{M}$. 
Usually, the initial point $v_0$ is the zero element of $T_x \mathcal{M}$; the iteration terminates when the relative residual $\left\|r_k\right\| /\|c\| \leq \epsilon$ for some threshold $\epsilon>0$, or some maximum number of iterates is reached.

The discussion of the above two approaches can be naturally extended to the case containing equality constraints in (\ref{RCOP}), where we consider $\mathcal{T}$ on $T_x \mathcal{M} \times \mathbb{R}^l$ instead of $\mathcal{A}$ on $T_x \mathcal{M}$.

\section{Preliminaries and Auxiliary Results}\label{sec:Preliminaries}

This section introduces the useful results that are indispensable to our subsequent discussion.

%\subsection{Totally Retractive Neighborhood}
\begin{remark}
	For a retraction $\mathrm{R}$ on $\mathcal{M}$ and $x \in \mathcal{M}$,
	by $\mathcal{D} \mathrm{R}_{x}\left(0_{x}\right)=\mathrm{id}_{T_{x} \mathcal{M}}$ and the inverse function theorem, there exists a neighborhood $V$ of $0_{x}$ in $T_{x} \mathcal{M}$ such that $ \mathrm{R}_{x} $ is a diffeomorphism on $V$; thus,
	$\mathrm{R}_{x}^{-1}(y)$ is well defined for all $ y \in \mathcal{M} $ sufficiently close to $ x $.
	In this case, $\mathrm{R}_{x}(V) \subset \mathcal{M}$ is called a retractive neighborhood of $x$.
	Furthermore, the existence of a \emph{totally} retractive neighborhood \cite[Theorem 2]{zhu2020riemannian} shows that for any $\bar{x} \in \mathcal{M} $ there is a neighborhood $W$ of $\bar{x}$ such that
	$\mathrm{R}_{x}^{-1}(y)$ is well defined for all $x, y \in W$.
	In what follows, we will suppose that an appropriate neighborhood has been chosen by default for the well-definedness of $\mathrm{R}_x^{-1}(y)$.
\end{remark}

\subsection{Vector Transport and Parallel Transport}

Define the Whitney sum
$
T \mathcal{M} \oplus T \mathcal{M}:=\left\{(\eta, \xi): \eta, \xi \in T_x \mathcal{M}, x \in \mathcal{M} \right\}.
$
A smooth map
$
\mathrm{T} \colon T \mathcal{M} \oplus T \mathcal{M} \to T \mathcal{M}: (\eta, \xi) \mapsto \mathrm{T}_{\eta}(\xi),
$
is called a \emph{vector transport} on $\mathcal{M}$ if there exists an associated retraction $\mathrm{R}$ on $\mathcal{M}$ such that $\mathrm{T}$ satisfies the following properties for all $x \in \mathcal{M}$:
\begin{enumerate}
	\item Associated retraction. $\mathrm{T}_{\eta}\left(\xi\right) \in T_{\mathrm{R}_{x}\left(\eta\right)} \mathcal{M}$
	for all $\eta, \xi \in T_x \mathcal{M}$.	
	\item Consistency. $\mathrm{T}_{0_{x}}\left(\xi\right)=\xi$ for all $\xi \in T_{x} \mathcal{M}$.	
	\item Linearity.
	$\mathrm{T}_{\eta}\left(a \xi+b \zeta\right)=a \mathrm{T}_{\eta}\left(\xi\right)+b \mathrm{T}_{\eta}\left(\zeta\right)$ for all $a, b \in \mathbb{R}$ and $\eta, \xi, \zeta \in T_{x} \mathcal{M}$.
\end{enumerate}
Thus, fixing any $\eta \in T_x \mathcal{M}$, the map
$
\mathrm{T}_\eta \colon T_x \mathcal{M} \to T_{\mathrm{R}_x(\eta)} \mathcal{M}: \xi \mapsto \mathrm{T}_\eta(\xi),
$
is a linear operator.
Additionally, $\mathrm{T}$ is \emph{isometric} if 
$
\langle \mathrm{T}_{\eta} (\xi), \mathrm{T}_{\eta} (\zeta)\rangle =\langle\xi, \zeta\rangle
$
holds, for all $x \in \mathcal{M}$ and all $\xi, \zeta, \eta \in T_{x} \mathcal{M}$.
In other words, for any $\eta \in T_x \mathcal{M}$, the adjoint and the inverse of $\mathrm{T}_{\eta}$ coincide, i.e., $ \mathrm{T}_{\eta}^{*} =  \mathrm{T}_{\eta}^{-1}$.
There are two important classes of vector transport as follows. Let $\mathrm{R}$ be a retraction on $\mathcal{M}$. 
\begin{enumerate}[(1)]
	\item 
	The \emph{differentiated retraction} defined by
	\begin{equation}\label{eq:DifferentiatedRetraction}
		\mathrm{T}_{\eta}(\xi):=\mathcal{D} \mathrm{R}_{x}(\eta)[\xi],  \; \eta, \xi \in T_{x} \mathcal{M}, x \in \mathcal{M}, 
	\end{equation}
	is a valid vector transport \cite[equation (8.6)]{absil2009optimization}.	
	\item
	Given a smooth curve $\gamma \colon [0,1] \to \mathcal{M}$ and $t_0, t_1 \in [0,1]$, the \emph{parallel transport} from the tangent space at $\gamma(t_0)$ to the tangent space at $\gamma(t_1)$ along $\gamma,$
	is a linear operator
	$ \mathrm{P}_\gamma^{t_1 \to t_0} \colon T_{\gamma\left(t_0\right)} \mathcal{M} \rightarrow T_{\gamma\left(t_1\right)} \mathcal{M}$ defined by $\mathrm{P}_\gamma^{t_1 \to t_0}(\xi)=Z\left(t_1\right)$, where $Z$ is the unique parallel vector field such that $Z\left(t_0\right)=\xi$.
	Then, for any $x \in \mathcal{M}, \eta \in T_x \mathcal{M}$, then
	\begin{equation}\label{eq:ParallelTransport}
		\mathrm{T}_{\eta}\left(\xi\right):=\mathrm{P}_\gamma^{1 \to 0} (\xi)
	\end{equation}
	is a valid vector transport \cite[equation (8.2)]{absil2009optimization}, where $\mathrm{P}_\gamma$ denotes the parallel transport along the curve $t \mapsto \gamma(t):=\mathrm{R}_x\left(t \eta\right)$. We often omit the superscript $^{1 \to 0}$ if it is clear from the context.
	In particular, parallel transport is isometric.
\end{enumerate}

\subsection{Lipschitz Continuity with respect to Vector Transports}\label{subsec:LipschitzContinuity}

Multiple Riemannian versions of Lipschitz continuity have been defined, e.g., \cite[Section 10.4]{boumal2022intromanifolds}.
Here, we consider Lipschitz continuity with respect to a vector transport.
In what follows, let $\mathcal{M}$ be a Riemannian manifold endowed with a vector transport $\mathrm{T}$ and an associated retraction $\mathrm{R}$.
We first consider the Lipschitz continuous gradient of scale field $f$.
\begin{definition}[{\cite[Definition 5.2.1]{huang2013optimization}}]
	\label{defn:LipschitzContinuouslyDifferentiable}
	A function $f\colon \mathcal{M} \rightarrow \mathbb{R}$ is \emph{Lipschitz continuously differentiable with respect to $\mathrm{T}$ in $\mathcal{U} \subset \mathcal{M}$} if it is differentiable and there exists a constant $\kappa>0$ such that, for all $x, y \in \mathcal{U}$,
	$
	\left\|\operatorname{grad} f(y)-\mathrm{T}_{\eta} [\operatorname{grad} f(x)] \right\| \leq \kappa\|\eta\|,
	$
	where $\eta=\mathrm{R}_{x}^{-1} y$. 
\end{definition}
%\begin{lemma}[{\cite[Lemma 5.2.1]{huang2013optimization}}]
%	If $f\colon \mathcal{M} \rightarrow \mathbb{R}$ is $ C^2 $, then for any $\bar{x} \in \mathcal{M}$ and for any given vector transport $\mathcal{T}$, there exists a neighborhood $\mathcal{U}$ of $\bar{x}$ such that $f$ is Lipschitz continuously differentiable with respect to $\mathrm{T}$ in $\mathcal{U}$.
%\end{lemma}
% We get the next result if gradient vector field, $\operatorname{grad} f(x)$, is replaced by a general vector field $F$.
%\begin{definition}
%	A vector field $ F $ on $\mathcal{M}$ is \emph{Lipschitz continuous with respect to $\mathrm{T}$ in $\mathcal{U} \subset \mathcal{M}$} if there exists a constant $\kappa>0$ such that, for all $x, y \in \mathcal{U}$,
%	$
%	\left\| F(y)-\mathrm{T}_{\eta} [F(x)]\right\| \leq \kappa\|\eta\|,
%	$
%	where $\eta=\mathrm{R}_{x}^{-1} y$.
%\end{definition}
%
%\begin{lemma}
%	If $F$ is a $C^{1}$ vector field on $\mathcal{M}$, then for any $\bar{x} \in \mathcal{M}$, there exists a neighborhood $\mathcal{U}$ of $\bar{x}$ such that $F$ is Lipschitz continuous with respect to $\mathrm{T}$ in $\mathcal{U}$.
%\end{lemma}

Going one degree higher, let us now discuss the Lipschitz continuity of Hessian operators.
Throughout this paper, for a linear operator $\mathcal{A}: \mathcal{E} \rightarrow \mathcal{E}^{\prime}$ between two finite-dimensional normed vector spaces $\mathcal{E}$ and $\mathcal{E}^{\prime}$, the (operator) norm of $\mathcal{A}$ is defined by $\|\mathcal{A}\|:=\sup \left\{\|\mathcal{A} v\|_{\mathcal{E}^{\prime}} : v \in \mathcal{E},\|v\|_{\mathcal{E}}=1\right.$, or, $\left.\|v\|_{\mathcal{E}} \leq 1\right\}.$
The inverse of $\mathrm{T}_{\eta}$ is needed in the following definitions, so we can assume that vector transport $\mathrm{T}$ is isometric, e.g., parallel transport in (\ref{eq:ParallelTransport}). In fact, there are many ways to construct isometric vector transports; see \cite[Section 2.3]{huang2015riemannian}.
\begin{definition}[{\cite[Assumption 3]{huang2015riemannian}}]
	\label{defn:TwiceLipschitzContinuouslyDifferentiable}
	A function $f\colon \mathcal{M} \rightarrow \mathbb{R}$ is \emph{twice Lipschitz continuously differentiable with respect to $\mathrm{T}$ in $\mathcal{U} \subset \mathcal{M}$} if it is twice differentiable and there exists a constant $\kappa>0$ such that, for all $x, y \in \mathcal{U}$, 
	$
	\|\operatorname{Hess} f(y)-\mathrm{T}_{\eta} \operatorname{Hess} f(x) \mathrm{T}_{\eta}^{-1}\| \leq \kappa d(x, y),
	$
	where $\eta=\mathrm{R}_{x}^{-1} y$.
\end{definition}
\begin{lemma}[{\cite[Lemma 4]{huang2015riemannian}}]
	\label{lem:TwiceLipschitzContinuouslyDifferentiable}
	If $f\colon \mathcal{M} \rightarrow \mathbb{R}$ is $ C^3 $, then for any $\bar{x} \in \mathcal{M}$ and any isometric vector transport $\mathrm{T}$, there exists a neighborhood $\mathcal{U}$ of $\bar{x}$ such that $f$ is twice Lipschitz continuously differentiable with respect to $\mathrm{T}$ in $\mathcal{U}$.
\end{lemma}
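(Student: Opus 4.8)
The plan is to reduce the intrinsic statement to the ordinary Euclidean mean value inequality for a $C^1$ matrix-valued function that vanishes on the diagonal, by working in a single coordinate chart around $\bar x$.

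First I would fix a chart $(\Omega,\varphi)$ around $\bar x$ together with a totally retractive neighborhood (the Remark above, i.e. \cite[Theorem 2]{zhu2020riemannian}), and shrink it to a neighborhood $\mathcal{U}$ of $\bar x$ with $\overline{\mathcal{U}}\subset\Omega$ compact and $\varphi(\mathcal{U})$ convex, small enough that $\eta:=\mathrm{R}_x^{-1}(y)$ is well defined for all $x,y\in\mathcal{U}$. On the compact set $\overline{\mathcal{U}}$ the metric coefficients $g_{ij}$ are continuous and bounded above and below, so (i) for each $y\in\mathcal{U}$ the intrinsic operator norm $\|\cdot\|$ on $T_y\mathcal{M}$ is uniformly equivalent to the Euclidean operator norm of the coordinate matrix, and (ii) the Riemannian distance $d(x,y)$, the chart distance $\|\varphi(x)-\varphi(y)\|$, and $\|\eta\|=\|\mathrm{R}_x^{-1}(y)\|$ are mutually comparable for $x,y\in\mathcal{U}$ (a standard local property of charts and retractions). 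Moreover $(x,\eta)\mapsto(x,\mathrm{R}_x(\eta))$ is a local diffeomorphism near $(\bar x,0_{\bar x})$, hence $(x,y)\mapsto\eta=\mathrm{R}_x^{-1}(y)$ is smooth; therefore, read in coordinates, the matrices of $\mathrm{T}_\eta$ and of $\mathrm{T}_\eta^{-1}=\mathrm{T}_\eta^{*}$ depend smoothly (in particular $C^1$) on $(x,y)\in\mathcal{U}\times\mathcal{U}$, and since $f\in C^3$ the coordinate matrix of $\operatorname{Hess} f$ is $C^1$.

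Next I would introduce the $C^1$ matrix-valued function
\[
\Phi(x,y):=[\operatorname{Hess} f(y)]-[\mathrm{T}_\eta]\,[\operatorname{Hess} f(x)]\,[\mathrm{T}_\eta]^{-1},\qquad \eta=\mathrm{R}_x^{-1}(y),
\]
on $\mathcal{U}\times\mathcal{U}$, where $[\cdot]$ denotes coordinate matrices. Because $\mathrm{T}_{0_x}=\mathrm{id}$ one has $\Phi(x,x)=0$ for every $x\in\mathcal{U}$, so $\Phi$ vanishes on the diagonal. Since $\Phi$ is $C^1$ on the compact set $\overline{\mathcal{U}}\times\overline{\mathcal{U}}$ it is Lipschitz there, and combined with $\Phi(x,x)=0$ (moving from $(x,x)$ to $(x,y)$ along the segment $t\mapsto\varphi^{-1}((1-t)\varphi(x)+t\varphi(y))$, which stays in $\mathcal{U}$ by convexity) this yields $\|\Phi(x,y)\|_{\mathrm{eucl}}\le L\,\|\varphi(x)-\varphi(y)\|$ for all $x,y\in\mathcal{U}$. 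Translating the left side to the intrinsic operator norm via (i) and the right side via (ii) gives $\|\operatorname{Hess} f(y)-\mathrm{T}_\eta\operatorname{Hess} f(x)\mathrm{T}_\eta^{-1}\|\le\kappa\, d(x,y)$ for a suitable $\kappa>0$, which is exactly Definition~\ref{defn:TwiceLipschitzContinuouslyDifferentiable}.

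I expect the difficulty to be bookkeeping rather than anything deep: one must keep track that $\mathrm{T}_\eta\colon T_x\mathcal{M}\to T_{y}\mathcal{M}$ so the three factors in $\Phi$ compose correctly, that the norm in the statement is the intrinsic one and hence needs the uniform metric equivalence on a compact set, and that the preliminary shrinking makes $\varphi(\mathcal{U})$ convex and the various local distances comparable. A chart-free alternative is to compare $\operatorname{Hess} f(y)$ and $\operatorname{Hess} f(x)$ along the minimizing geodesic $\gamma$ from $x$ to $y$ via parallel transport, using $\frac{d}{dt}\bigl(\mathrm{P}_t^{-1}\operatorname{Hess} f(\gamma(t))\mathrm{P}_t\bigr)=\mathrm{P}_t^{-1}(\nabla_{\dot\gamma}\operatorname{Hess} f)\mathrm{P}_t$ and the boundedness of $\|\nabla\operatorname{Hess} f\|$ on a compact set (finite because $f\in C^3$), then replacing $\mathrm{P}$ by $\mathrm{T}_\eta$ at the cost of the error $\|\mathrm{T}_\eta-\mathrm{P}_\eta\|=O(\|\eta\|)=O(d(x,y))$, which follows from smoothness of both transports and their agreement at $\eta=0$; but the chart argument above is shorter.
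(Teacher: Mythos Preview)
The paper does not give its own proof of this lemma: it is quoted verbatim from \cite[Lemma~4]{huang2015riemannian} and used as a black box, so there is nothing in the present paper to compare your argument against.

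That said, your proposal is correct and is essentially the same style of argument used in the cited reference: pass to a single chart on a compact neighborhood, observe that the coordinate matrices of $\operatorname{Hess}f$ (which is $C^1$ because $f\in C^3$) and of $\mathrm{T}_\eta,\mathrm{T}_\eta^{-1}$ depend $C^1$-smoothly on $(x,y)$, note that the resulting matrix-valued $\Phi$ vanishes on the diagonal, and invoke the Euclidean mean-value inequality on a compact convex set. Your bookkeeping is right---in particular that $\mathrm{T}_\eta^{-1}=\mathrm{T}_\eta^{*}$ uses the isometry hypothesis, that the three factors in $\Phi$ compose as maps on $T_y\mathcal{M}$, and that the norm and distance equivalences on $\overline{\mathcal{U}}$ absorb the chart-to-intrinsic conversions. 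The chart-free alternative you sketch (differentiate $\mathrm{P}_t^{-1}\operatorname{Hess}f(\gamma(t))\mathrm{P}_t$ along the geodesic, bound $\|\nabla\operatorname{Hess}f\|$ on a compact set, then replace $\mathrm{P}$ by $\mathrm{T}$ at $O(d(x,y))$ cost) is also valid and is in fact the route by which the companion Lemma~\ref{lem:LipCountNabulaF} is typically argued; either path suffices here.
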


If the operator, $\operatorname{Hess}f(x)$, above is replaced by a general covariant derivative $\nabla F (x)$, we can get the next results in a similar way.
Lemma \ref{lem:LipCountNabulaF} can be proven in the same way as Lemma \ref{lem:TwiceLipschitzContinuouslyDifferentiable}.
\begin{definition}
	Given a vector field $F$ on $\mathcal{M}$.
	The map $x \mapsto \nabla F(x)$ is \emph{Lipschitz continuous with respect to $\mathrm{T}$ in $\mathcal{U} \subset \mathcal{M}$} if there exists a constant $\kappa>0$ such that, for all $x, y \in \mathcal{U}$, it holds that
	$
	\| \nabla F(y)
	-
	\mathrm{T}_{\eta} \nabla F(x) \mathrm{T}_{\eta}^{-1}
	\| \leq \kappa d(x, y),
	$
	where $\eta=\mathrm{R}_{x}^{-1} y$.
\end{definition}
\begin{lemma}\label{lem:LipCountNabulaF}
	If $F$ is a $C^{2}$ vector field, then
	for any $\bar{x} \in \mathcal{M}$ and any isometric vector transport $\mathrm{T}$, there exists a neighborhood $\mathcal{U}$ of $\bar{x}$ such that the map $x \mapsto \nabla F(x)$ is Lipschitz continuous with respect to $\mathrm{T}$ in $\mathcal{U}$.
\end{lemma}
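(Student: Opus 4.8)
The plan is to mimic the proof of Lemma \ref{lem:TwiceLipschitzContinuouslyDifferentiable} (that is, \cite[Lemma 4]{huang2015riemannian}), which handles the special case $\nabla F = \operatorname{Hess} f = \nabla(\operatorname{grad} f)$, and observe that nowhere in that argument is the gradient structure of $F$ actually used — only that $F$ is a sufficiently smooth vector field and that $\mathrm{T}$ is isometric (so that $\mathrm{T}_\eta^{-1} = \mathrm{T}_\eta^*$ is available and well-behaved). Since the statement explicitly grants us this, I would open the proof by saying precisely that: ``The proof is essentially identical to that of Lemma \ref{lem:TwiceLipschitzContinuouslyDifferentiable}; we sketch the differences.''

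First I would fix $\bar x \in \mathcal{M}$ and an isometric vector transport $\mathrm{T}$ with associated retraction $\mathrm{R}$, and choose a compact coordinate neighborhood $\mathcal{U}$ of $\bar x$ on which $\mathrm{R}_x^{-1}(y)$ is well defined for all $x,y \in \mathcal{U}$ (invoking the totally retractive neighborhood from the Remark above) and on which the Riemannian distance is comparable to the coordinate distance. Next I would introduce the map $(x,y) \mapsto \mathrm{T}_{\eta}\,\nabla F(x)\,\mathrm{T}_\eta^{-1}$ with $\eta = \mathrm{R}_x^{-1}(y)$, regarded as an operator on $T_y\mathcal{M}$, and compare it with $\nabla F(y)$. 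The key point is that $F$ being $C^2$ makes $x \mapsto \nabla F(x)$ a $C^1$ (hence locally Lipschitz) tensor field in any local frame, and the vector transport $\mathrm{T}$ together with its inverse depends smoothly on $(\eta,\xi)$ on the Whitney sum; composing smooth/$C^1$ maps on the compact set $\mathcal{U}\times\mathcal{U}$ yields a Lipschitz bound. Concretely, write everything in a fixed smooth orthonormal frame on $\mathcal{U}$: $\nabla F(y)$ and $\mathrm{T}_\eta \nabla F(x)\mathrm{T}_\eta^{-1}$ become matrix-valued functions, each entry of which is $C^1$ jointly in $(x,y)$ (using $C^2$-ness of $F$ and smoothness of $\mathrm{T},\mathrm{T}^{-1},\mathrm{R}^{-1}$), so their difference is Lipschitz in $(x,y)$; since the difference vanishes when $x=y$ (by consistency $\mathrm{T}_{0_x} = \mathrm{id}$), the bound collapses to $\kappa$ times the coordinate distance, which in turn is $\le \kappa' d(x,y)$ after shrinking $\mathcal{U}$ and adjusting the constant.

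The main obstacle — really the only subtlety — is the bookkeeping needed to make ``$\mathrm{T}_\eta \nabla F(x)\mathrm{T}_\eta^{-1}$ is jointly $C^1$ in $(x,y)$'' precise: one has to express $\eta = \mathrm{R}_x^{-1}(y)$ as a smooth function of $(x,y)$ (guaranteed by the inverse function theorem / totally retractive neighborhood), note that $\mathrm{T}$ as a map on the Whitney sum is smooth so $\xi \mapsto \mathrm{T}_{\eta(x,y)}(\xi)$ and its inverse have $C^1$ coefficient matrices in the chosen frames, and then chain these together with the $C^1$ matrix of $\nabla F$. None of this requires $F = \operatorname{grad} f$; the self-adjointness of $\operatorname{Hess} f$ played no role in \cite[Lemma 4]{huang2015riemannian} either. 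I would therefore state the bound holds on a possibly smaller neighborhood $\mathcal{U}$ of $\bar x$ and conclude. I expect the written proof to be two or three sentences once the reduction to Lemma \ref{lem:TwiceLipschitzContinuouslyDifferentiable} is stated.
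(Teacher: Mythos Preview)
Your proposal is correct and matches the paper's approach exactly: the paper simply states ``Lemma \ref{lem:LipCountNabulaF} can be proven in the same way as Lemma \ref{lem:TwiceLipschitzContinuouslyDifferentiable}'' without writing out any details, which is precisely the reduction you identified. Your anticipation that the written proof would be only a few sentences was, if anything, an overestimate.
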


\subsection{Auxiliary Lemmas}

Notice that in the previous subsection on the definitions of Lipschitz continuity, we used $\|\eta\|$ with $\eta=\mathrm{R}_x^{-1} y$ or $d(x, y)$ to denote the upper bound on the right-hand side. The next lemma shows that the two are not essentially different. When $\mathcal{M}=\mathbb{R}^{n}$, both reduce to $\| x-y \|$.

\begin{lemma}[{\cite[Lemma 2]{huang2015riemannian}}]\label{lem:distR1}
	Let $\mathcal{M}$ be a Riemannian manifold with a retraction $\mathrm{R}$ and let $\bar{x} \in \mathcal{M}$. 
	Then,
	\begin{enumerate}[(i)]
		 \item there exist $a_{0}, a_{1}, \delta_{a_{0}, a_{1}}>0$ such that for all $x$ in a sufficiently small neighborhood of $\bar{x}$ and all $\xi, \eta \in T_{x} \mathcal{M}$ with $\|\xi\|, \|\eta\|  \leq \delta_{a_{0}, a_{1}}$,
		 one has
		 $
		 a_{0}\|\xi-\eta\| \leq d(\mathrm{R}_{x}(\eta), \mathrm{R}_{x}(\xi)) \leq a_{1}\|\xi-\eta\|.
		 $	
		 In particular,
		 $
		 a_{0}\|\xi\| \leq d(x, \mathrm{R}_{x}(\xi)) \leq a_{1}\|\xi\|
		 $
		 when $\eta=0$;
		 
		 \item
		 there exist $a_{0},a_{1}>0$ such that for all $x$ in a sufficiently small neighborhood of $\bar{x}$, one has
		 $
		 a_{0}\|\xi\| \leq d(x,\bar{x}) \leq a_{1}\|\xi\| \text { where } \xi=\mathrm{R}_{\bar{x}}^{-1}(x). 
		 $
	\end{enumerate}
\end{lemma}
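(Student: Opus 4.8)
The plan is to reduce everything to elementary estimates in a single coordinate chart, where the Riemannian distance is bi-Lipschitz equivalent to the Euclidean distance and the retraction becomes an ordinary smooth map whose differential is close to the identity near the zero section. The two parts of the lemma are then essentially the same statement, (ii) being the special case of (i) with base point $\bar x$, zero second argument, and the substitution $\xi=\mathrm{R}_{\bar x}^{-1}(x)$.

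First I would fix a chart $(U,\varphi)$ around $\bar x$ with $\varphi(\bar x)=0$ and, after shrinking $U$, record two standard facts. (a) There are constants $c_0,c_1>0$ with $c_0\|\varphi(p)-\varphi(q)\|\le d(p,q)\le c_1\|\varphi(p)-\varphi(q)\|$ for all $p,q\in U$: the upper bound comes from bounding the length of the straight segment in coordinates by an upper bound on the metric coefficients over a compact subneighborhood, and the lower bound follows because, for $p,q$ close enough, a distance-minimizing curve stays in $U$ and is then bounded below using a positive lower bound on the metric coefficients. (b) For $x\in U$ the norm $\|\cdot\|_x$ on $T_x\mathcal{M}$ is equivalent, uniformly in $x$, to the Euclidean norm of the coordinate representation of the tangent vector, for the same reason.

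Next I would write the retraction in coordinates: setting $\tilde{\mathrm{R}}(x,v):=\varphi\big(\mathrm{R}_x(\xi)\big)$, where $v\in\mathbb{R}^d$ is the coordinate vector of $\xi\in T_x\mathcal{M}$, one obtains a smooth map on a neighborhood of $(0,0)$ with $\tilde{\mathrm{R}}(x,0)=\varphi(x)$ and, because $\mathcal{D}\mathrm{R}_x(0_x)=\operatorname{id}_{T_x\mathcal{M}}$, with $\partial_v\tilde{\mathrm{R}}(x,0)=I$. By continuity of $\partial_v\tilde{\mathrm{R}}$ there are $\delta>0$ and a neighborhood of $\bar x$ on which $\|\partial_v\tilde{\mathrm{R}}(x,v)-I\|\le\tfrac12$ whenever $\|v\|\le\delta$; the mean value inequality applied to $v\mapsto\tilde{\mathrm{R}}(x,v)$ on the convex ball $\{\|v\|\le\delta\}$ then yields $\tfrac12\|v-v'\|\le\|\tilde{\mathrm{R}}(x,v)-\tilde{\mathrm{R}}(x,v')\|\le\tfrac32\|v-v'\|$. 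Chaining this with (a) and (b) and absorbing constants gives, for $\xi,\eta$ with $\|\xi\|,\|\eta\|\le\delta_{a_0,a_1}$ (chosen so that their coordinate vectors have norm $\le\delta$), the two-sided bound $a_0\|\xi-\eta\|\le d(\mathrm{R}_x(\eta),\mathrm{R}_x(\xi))\le a_1\|\xi-\eta\|$; taking $\eta=0$ and using $\mathrm{R}_x(0_x)=x$ gives the ``in particular'' statement, and applying that with base point $\bar x$ and $\xi=\mathrm{R}_{\bar x}^{-1}(x)$ (well defined and small for $x$ near $\bar x$) gives (ii), since then $\mathrm{R}_{\bar x}(\xi)=x$.

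The main obstacle is fact (a), the comparison of the Riemannian distance with the coordinate distance, and specifically its \emph{uniform} validity not only at $\bar x$ but for all base points $x$ in a neighborhood: the lower bound requires ensuring that a distance-minimizing curve between two nearby points does not escape the chart, which is handled by a compactness/continuity argument or, equivalently, by working inside a geodesically convex ball. A slightly slicker alternative replaces the chart by the exponential map, writing $d(p,q)=\|\operatorname{Exp}_p^{-1}q\|$ for $q$ near $p$ and using that $\operatorname{Exp}_x^{-1}\circ\mathrm{R}_x$ has differential $\operatorname{id}$ at $0_x$; this trades the chart estimates for the equally standard fact that $\operatorname{Exp}_x$ is bi-Lipschitz near $0_x$ uniformly in $x$, after which the same bookkeeping goes through.
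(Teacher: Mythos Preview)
The paper does not supply its own proof of this lemma: it is stated with a citation to \cite[Lemma~2]{huang2015riemannian} and used as a black box in the subsequent arguments. So there is no in-paper proof to compare against.

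Your approach is correct and standard. Reducing to a coordinate chart where the Riemannian distance is bi-Lipschitz equivalent to the Euclidean distance, writing the retraction as a smooth map with $\partial_v\tilde{\mathrm R}(x,0)=I$, and then applying the mean value inequality is exactly how such statements are proved; you have also correctly flagged the one genuine subtlety, namely the uniform lower bound in (a), which requires that nearly minimizing curves between nearby points stay inside the chart (handled, as you say, by working inside a geodesically convex ball or by a compactness argument). The alternative via $\operatorname{Exp}_x^{-1}\circ\mathrm{R}_x$ is equally valid and is closer in spirit to how the cited reference argues. Either route yields (i), and your reduction of (ii) to (i) with base point $\bar x$ and $\xi=\mathrm{R}_{\bar x}^{-1}(x)$ is fine.
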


The next lemma is the fundamental theorem of calculus in the Riemannian case.
\begin{lemma}[{\cite[Lemma 8]{huang2015riemannian}}]\label{lem:FundamentalTheoremOfCalculusRiemannian}
	Let $F$ be a $C^{1}$ vector field and $\bar{x} \in \mathcal{M}$. 
	Then there exist a neighborhood $\mathcal{U}$ of $\bar{x}$ and a constant $c_{1} \geq 0$ such that for all $x, y \in \mathcal{U}$,
	\begin{equation*}
	\left\| \mathrm{P}_{\gamma}^{0 \to 1} [F(y)]-F(x)-\int_{0}^{1} \mathrm{P}_{\gamma}^{0 \to t} \nabla F(\gamma(t)) \mathrm{P}_{\gamma}^{t \to 0} [\eta] \mathrm{d} t \right\| 
	\leq c_{1} \|\eta\|^{2},
	\end{equation*}
	where $\eta=\mathrm{R}_{x}^{-1}(y)$ and $\mathrm{P}_{\gamma}$ is the parallel transport along the curve $\gamma(t):=\mathrm{R}_{x}\left(t \eta\right)$.
	Moreover, if $\mathrm{R}=\operatorname{Exp}$, then indeed $c_1=0$ above (see \cite[equation (2.4)]{ferreira2002kantorovich}).
\end{lemma}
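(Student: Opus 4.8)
The plan is to transport everything back to the single vector space $T_x\mathcal{M}$ along $\gamma$, where the ordinary fundamental theorem of calculus applies, and then to control the gap between the velocity of the retraction curve and the parallel transport of $\eta$. First shrink a neighborhood of $\bar x$ to a totally retractive one, so that for $x,y$ in it $\eta:=\mathrm{R}_x^{-1}(y)$ is well defined and small and $\gamma(t):=\mathrm{R}_x(t\eta)$, $t\in[0,1]$, stays inside a fixed compact neighborhood. Define $Z\colon[0,1]\to T_x\mathcal{M}$ by $Z(t):=\mathrm{P}_\gamma^{0\to t}[F(\gamma(t))]$, the value of $F$ along $\gamma$ transported back to $T_x\mathcal{M}$. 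Then $Z(0)=F(x)$ and $Z(1)=\mathrm{P}_\gamma^{0\to1}[F(y)]$, and since $Z$ is a curve in a fixed vector space, $\mathrm{P}_\gamma^{0\to1}[F(y)]-F(x)=\int_0^1 Z'(t)\,\mathrm{d}t$.

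The one classical fact I would invoke is the commutation of parallel transport with covariant differentiation along a curve: for any smooth vector field $V$ along $\gamma$, $\frac{\mathrm{d}}{\mathrm{d}t}\bigl(\mathrm{P}_\gamma^{0\to t}[V(t)]\bigr)=\mathrm{P}_\gamma^{0\to t}\bigl[\tfrac{D}{\mathrm{d}t}V(t)\bigr]$, which is one line using a parallel frame along $\gamma$. Applying it to $V(t)=F(\gamma(t))$, whose covariant derivative is $\tfrac{D}{\mathrm{d}t}F(\gamma(t))=\nabla F(\gamma(t))[\dot\gamma(t)]$, gives $Z'(t)=\mathrm{P}_\gamma^{0\to t}\bigl[\nabla F(\gamma(t))[\dot\gamma(t)]\bigr]$, hence the exact identity $\mathrm{P}_\gamma^{0\to1}[F(y)]-F(x)=\int_0^1 \mathrm{P}_\gamma^{0\to t}\nabla F(\gamma(t))[\dot\gamma(t)]\,\mathrm{d}t$. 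The whole claim thus reduces to replacing $\dot\gamma(t)$ by $\mathrm{P}_\gamma^{t\to0}[\eta]$ inside the integral at the cost of an $O(\|\eta\|^2)$ error.

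To estimate that replacement, set $g(t):=\mathrm{P}_\gamma^{0\to t}[\dot\gamma(t)]\in T_x\mathcal{M}$. The retraction property $\mathcal{D}\mathrm{R}_x(0_x)=\mathrm{id}$ gives $g(0)=\dot\gamma(0)=\eta=\mathrm{P}_\gamma^{0\to0}[\eta]$, and the same commutation rule gives $g'(t)=\mathrm{P}_\gamma^{0\to t}\bigl[\tfrac{D}{\mathrm{d}t}\dot\gamma(t)\bigr]$, the covariant acceleration of $\gamma$. Because $\gamma(t)=\mathrm{R}_x(t\eta)$ with $\mathrm{R}$ smooth, a Taylor expansion (in a chart, together with the Christoffel correction) shows $\tfrac{D}{\mathrm{d}t}\dot\gamma(t)$ is, to leading order, a bounded bilinear expression in $\eta$, so $\|g'(t)\|\le C\|\eta\|^2$ uniformly for $x,y$ in the neighborhood and $t\in[0,1]$; integrating, $\|\dot\gamma(t)-\mathrm{P}_\gamma^{t\to0}[\eta]\|=\|g(t)-\eta\|\le C\|\eta\|^2$ (using that parallel transport is isometric). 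Since $F$ is $C^1$, $\nabla F$ is continuous, hence bounded by some $M$ on the compact neighborhood, and therefore
\begin{align*}
\Bigl\| \mathrm{P}_\gamma^{0\to1}[F(y)] - F(x) - \int_0^1 \mathrm{P}_\gamma^{0\to t}\nabla F(\gamma(t))\mathrm{P}_\gamma^{t\to0}[\eta]\,\mathrm{d}t \Bigr\|
&\le \int_0^1 \|\nabla F(\gamma(t))\|\,\bigl\|\dot\gamma(t)-\mathrm{P}_\gamma^{t\to0}[\eta]\bigr\|\,\mathrm{d}t \\
&\le M C \|\eta\|^2,
\end{align*}
so $c_1:=MC$ works. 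For $\mathrm{R}=\operatorname{Exp}$ the curve $\gamma$ is a geodesic, so $\tfrac{D}{\mathrm{d}t}\dot\gamma\equiv0$, whence $g\equiv\eta$ and $\dot\gamma(t)=\mathrm{P}_\gamma^{t\to0}[\eta]$ exactly; the bracketed quantity then vanishes and $c_1=0$.

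The main obstacle is the uniform quadratic bound $\|\dot\gamma(t)-\mathrm{P}_\gamma^{t\to0}[\eta]\|\le C\|\eta\|^2$: one has to show that the covariant acceleration of the retraction curve scales quadratically in $\eta$ and that the constant $C$ can be chosen independently of the base point $x$ and endpoint $y$ over a fixed neighborhood of $\bar x$, which is where the smoothness of $\mathrm{R}$, a chart/Taylor argument, and a compactness argument all enter. Everything else — the vector-space fundamental theorem of calculus and the parallel-transport/covariant-derivative commutation identity — is routine.
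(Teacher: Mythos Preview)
The paper does not supply its own proof of this lemma: it is quoted verbatim from \cite[Lemma 8]{huang2015riemannian}, with the $\mathrm{R}=\operatorname{Exp}$ refinement attributed to \cite[equation (2.4)]{ferreira2002kantorovich}, and no argument is given in the present paper. So there is no in-paper proof to compare your proposal against.

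That said, your argument is correct and is essentially the standard one. The two ingredients you isolate --- the commutation identity $\tfrac{\mathrm{d}}{\mathrm{d}t}\bigl(\mathrm{P}_\gamma^{0\to t}[V(t)]\bigr)=\mathrm{P}_\gamma^{0\to t}\bigl[\tfrac{D}{\mathrm{d}t}V(t)\bigr]$, which reduces everything to an ordinary integral in $T_x\mathcal{M}$, and the uniform quadratic bound on the covariant acceleration $\tfrac{D}{\mathrm{d}t}\dot\gamma(t)$ of the retraction curve --- are exactly what drives the proof in \cite{huang2015riemannian}. Your handling of the $\mathrm{R}=\operatorname{Exp}$ case is also the right one: a geodesic has vanishing covariant acceleration, so $\dot\gamma(t)=\mathrm{P}_\gamma^{t\to0}[\eta]$ identically and the integral formula becomes exact. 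The only point you correctly flag as requiring care --- uniformity of the constant $C$ in $x,y$ over a neighborhood of $\bar x$ --- is handled by smoothness of $\mathrm{R}$ and of the Christoffel symbols together with compactness, just as you outline.
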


The next lemma is a Riemannian extension of some important estimates, usually used to analyze Newton methods \cite[Lemma 4.1.12]{dennis1996numerical}.
\begin{lemma}\label{lem:R1est}
	Let $F$ be a $ C^2 $ vector field and $\bar{x}\in\mathcal{M}$. Then there exist a neighborhood $\mathcal{U}$ of $\bar{x}$ and a constant $c_{2}>0$ such that for all $x \in \mathcal{U}$,
	\begin{equation*}
	\left\|\mathrm{P}_{\gamma}^{0 \to 1} [F(x)]-F(\bar{x})-\nabla F(\bar{x}) [\eta] 
	\right\|
	 \leq c_{2} d^{2}(\bar{x} , x),
	\end{equation*}
	where $\eta=\mathrm{R} _{\bar{x}}^{-1} x$ and $\mathrm{P}_{\gamma}$ is the parallel transport along the curve $\gamma(t):=\mathrm{R}_{\bar{x}}(t \eta)$.
\end{lemma}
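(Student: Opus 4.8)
The plan is to derive this estimate as a corollary of the Riemannian fundamental theorem of calculus (Lemma \ref{lem:FundamentalTheoremOfCalculusRiemannian}) combined with the Lipschitz continuity of $x \mapsto \nabla F(x)$ with respect to parallel transport (Lemma \ref{lem:LipCountNabulaF}), mimicking the classical Euclidean argument in \cite[Lemma 4.1.12]{dennis1996numerical}. First, I would fix $\bar{x} \in \mathcal{M}$ and choose a neighborhood $\mathcal{U}$ small enough that: (a) $\mathrm{R}_{\bar{x}}^{-1}$ is well-defined on $\mathcal{U}$; (b) Lemma \ref{lem:FundamentalTheoremOfCalculusRiemannian} applies with some constant $c_1 \ge 0$; (c) Lemma \ref{lem:LipCountNabulaF} applies, so $\|\nabla F(y) - \mathrm{P}_\gamma^{?} \nabla F(\bar{x}) \mathrm{P}_\gamma^{?}\| \le \kappa\, d(\bar{x}, y)$ along the relevant curves; and (d) Lemma \ref{lem:distR1}(i) applies with constants $a_0, a_1$. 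Set $\eta := \mathrm{R}_{\bar{x}}^{-1} x$ and $\gamma(t) := \mathrm{R}_{\bar{x}}(t\eta)$, noting $\gamma(0) = \bar{x}$, $\gamma(1) = x$.

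Next, I would apply Lemma \ref{lem:FundamentalTheoremOfCalculusRiemannian} with the roles of the two points being $\bar{x}$ (as ``$x$'') and $x$ (as ``$y$''), which gives
\begin{equation*}
\left\| \mathrm{P}_{\gamma}^{0 \to 1}[F(x)] - F(\bar{x}) - \int_0^1 \mathrm{P}_\gamma^{0 \to t}\, \nabla F(\gamma(t))\, \mathrm{P}_\gamma^{t \to 0}[\eta]\, \mathrm{d}t \right\| \le c_1 \|\eta\|^2.
\end{equation*}
Since $\nabla F(\bar{x})[\eta] = \int_0^1 \nabla F(\bar{x})[\eta]\, \mathrm{d}t$ and $\mathrm{P}_\gamma^{0\to 0}$ is the identity, the triangle inequality reduces the target quantity to bounding
\begin{equation*}
c_1 \|\eta\|^2 + \int_0^1 \left\| \mathrm{P}_\gamma^{0\to t}\, \nabla F(\gamma(t))\, \mathrm{P}_\gamma^{t\to 0}[\eta] - \nabla F(\bar{x})[\eta] \right\| \mathrm{d}t.
\end{equation*}
Using that $\mathrm{P}_\gamma^{0\to t}$ is an isometry, the integrand equals $\|(\mathrm{P}_\gamma^{0\to t}\, \nabla F(\gamma(t))\, \mathrm{P}_\gamma^{t\to 0} - \nabla F(\bar{x}))[\eta]\| \le \|\mathrm{P}_\gamma^{0\to t}\, \nabla F(\gamma(t))\, \mathrm{P}_\gamma^{t\to 0} - \nabla F(\bar{x})\| \cdot \|\eta\|$, and here I would invoke the Lipschitz bound of Lemma \ref{lem:LipCountNabulaF} along $\gamma$, giving $\le \kappa\, d(\bar{x}, \gamma(t))\, \|\eta\| = \kappa\, d(\bar{x}, \mathrm{R}_{\bar{x}}(t\eta))\, \|\eta\| \le \kappa a_1 t \|\eta\|^2$ by Lemma \ref{lem:distR1}(i). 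Integrating over $t \in [0,1]$ yields $\tfrac{1}{2}\kappa a_1 \|\eta\|^2$, so the whole expression is bounded by $(c_1 + \tfrac{1}{2}\kappa a_1)\|\eta\|^2$. Finally, converting $\|\eta\|^2$ to $d^2(\bar{x}, x)$ via the lower estimate $a_0\|\eta\| \le d(\bar{x}, \mathrm{R}_{\bar{x}}(\eta)) = d(\bar{x}, x)$ from Lemma \ref{lem:distR1}(i) gives $\|\eta\|^2 \le a_0^{-2} d^2(\bar{x}, x)$, so $c_2 := a_0^{-2}(c_1 + \tfrac{1}{2}\kappa a_1)$ works.

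The main obstacle I anticipate is bookkeeping with the various parallel transports: one must be careful that the parallel transport used in Lemma \ref{lem:LipCountNabulaF}'s definition (transport along $\gamma$, which is exactly $t \mapsto \mathrm{R}_{\bar{x}}(t\eta)$ restricted to subsegments) is compatible with the transports $\mathrm{P}_\gamma^{0\to t}$, $\mathrm{P}_\gamma^{t\to 0}$ appearing in Lemma \ref{lem:FundamentalTheoremOfCalculusRiemannian} — in particular that the Lipschitz estimate is being applied between $\gamma(t)$ and $\gamma(0) = \bar{x}$ along the same curve, so that the relevant transport operator is indeed $\mathrm{P}_\gamma^{t\to 0}$ and its inverse $\mathrm{P}_\gamma^{0\to t}$. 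A secondary minor point is ensuring all the neighborhoods and smallness conditions (the $\delta_{a_0,a_1}$ bound on $\|\eta\|$, the domains of the three cited lemmas) can be met simultaneously by shrinking $\mathcal{U}$; since there are finitely many constraints, this is routine. Everything else is a direct transcription of the Euclidean proof.
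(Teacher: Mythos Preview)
Your proposal is correct and follows essentially the same route as the paper's proof: split via the triangle inequality using Lemma~\ref{lem:FundamentalTheoremOfCalculusRiemannian}, bound the integral term via Lemma~\ref{lem:LipCountNabulaF} together with Lemma~\ref{lem:distR1}(i), integrate to obtain $\tfrac{1}{2}\kappa a_1\|\eta\|^2$, and convert $\|\eta\|$ to $d(\bar{x},x)$ at the end (the paper invokes part~(ii) of Lemma~\ref{lem:distR1} for this last step, but your use of part~(i) is equivalent here).
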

\begin{proof} 
Let $LHS:=\|\mathrm{P}_{\gamma}^{0 \to 1} [F(x)]-F(\bar{x})-\nabla F(\bar{x}) [\eta] \|.$ It follows that
\begin{equation*}
		\begin{aligned}
			LHS
			& \leq
			\left\|\mathrm{P}_{\gamma}^{0 \to 1} [F(x)]-F(\bar{x})
			-\int_{0}^{1} \mathrm{P}_{\gamma}^{0 \to t} \nabla F(\gamma(t)) \mathrm{P}_{\gamma}^{t \to 0} [\eta] \mathrm{d} t  \right\| \\
			& \quad +
			\left\|\int_{0}^{1} \mathrm{P}_{\gamma}^{0 \to t} \nabla F(\gamma(t)) \mathrm{P}_{\gamma}^{t \to 0} [\eta] \mathrm{d} t -\nabla F(\bar{x}) [\eta] \right\| \\
			&\leq 
			c_{1} \|\eta\|^{2} +
			\Big\|
			\int_{0}^{1}\left(\mathrm{P}_{\gamma}^{0 \to t} \nabla F(\gamma(t)) \mathrm{P}_{\gamma}^{t \to 0} -\nabla F(\bar{x})\right)
			[\eta] \mathrm{d} t\Big\|.
			\text{ (by Lemma \ref{lem:FundamentalTheoremOfCalculusRiemannian})}
		\end{aligned}
\end{equation*}
Let
$\theta := \left\| \int_{0}^{1}\left(\mathrm{P}_{\gamma}^{0 \to t} \nabla F(\gamma(t)) \mathrm{P}_{\gamma}^{t \to 0} -\nabla F(\bar{x})\right)
[\eta] \mathrm{d} t \right\|$.
Note that
\begin{align*}
	\theta 
	&\leq 
	\int_{0}^{1} 
	\left\|
	\mathrm{P}_{\gamma}^{0 \to t} \nabla F(\gamma(t)) \mathrm{P}_{\gamma}^{t \to 0} -\nabla F(\bar{x})
	\right\|
	\left\| \eta \right\|
	\mathrm{d} t \\
	&\leq  
	\int_{0}^{1} 
	c_{0} d(\bar{x}, \mathrm{R}_{\bar{x}}(t \eta))
	\left\| \eta \right\|
	\mathrm{d} t	\text{ (by Lemma \ref{lem:LipCountNabulaF})} \\
	&\leq   
	\int_{0}^{1} 
	c_{0} a_{1}t \|\eta\|
	\left\| \eta \right\|
	\mathrm{d} t 	
	= 
	\frac{1}{2} 
	c_{0} a_{1} \|\eta\|^{2}. 	
	\text{ (by (i) of Lemma \ref{lem:distR1})}
\end{align*}
Combining the above results yields
$$
LHS
\leq (c_{1}+ \frac{1}{2} 
c_{0} a_{1})
\|\eta\|^{2}
\leq  
(c_{1}+ \frac{1}{2} 
c_{0} a_{1})  / a_{0}^{2}
d^{2}(\bar{x} , x),
$$
where the last inequality comes from (ii) of Lemma \ref{lem:distR1}. Letting $c_{2}:= (c_{1}+ \frac{1}{2} 
c_{0} a_{1})  / a_{0}^{2} $ completes the proof.
\end{proof}

We end this section with the following useful lemmas.
\begin{lemma}[{\cite[Lemma 3.2]{fernandes2017superlinear}}]\label{lem:nonsingular}
	Given a vector field $F$ on $\mathcal{M}$.
	If the map $p \mapsto \nabla F(p)$ is continuous at $p^{*}$ and $\nabla F(p^{*})$ is nonsingular, 
	then there exist a neighborhood $\mathcal{U}$ of $p^{*}$ and a constant $\Xi > 0$ such that, for all $p \in \mathcal{U}$, $\nabla F(p)$ is nonsingular and $\left\|\nabla F(p)^{-1}\right\| \leq \Xi$.
\end{lemma}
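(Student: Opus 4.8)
The plan is to reduce the claim to the classical Euclidean fact that the set of invertible operators on a finite-dimensional space is open and that inversion is locally bounded there; the only genuine subtlety is that $\nabla F(p)$ and $\nabla F(p^{*})$ act on the different tangent spaces $T_{p}\mathcal{M}$ and $T_{p^{*}}\mathcal{M}$, so they must be compared after being carried to a common space.

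First I would fix an isometric vector transport $\mathrm{T}$ with associated retraction $\mathrm{R}$, for instance the parallel transport in (\ref{eq:ParallelTransport}), and work inside a totally retractive neighborhood of $p^{*}$ (see the opening remark of Section~\ref{sec:Preliminaries}), so that for every $p$ close to $p^{*}$ the vector $\eta := \mathrm{R}_{p^{*}}^{-1}p \in T_{p^{*}}\mathcal{M}$ is defined and $\mathrm{T}_{\eta}\colon T_{p^{*}}\mathcal{M}\to T_{p}\mathcal{M}$ is a linear isometric isomorphism, so that $\mathrm{T}_{\eta}^{-1}$ exists. I then define the pulled-back operator $A(p):=\mathrm{T}_{\eta}^{-1}\,\nabla F(p)\,\mathrm{T}_{\eta}\colon T_{p^{*}}\mathcal{M}\to T_{p^{*}}\mathcal{M}$, which satisfies $A(p^{*})=\nabla F(p^{*})$, and I would record that the continuity hypothesis on $p\mapsto\nabla F(p)$ is exactly the statement that $\|A(p)-\nabla F(p^{*})\|\to 0$ as $p\to p^{*}$ in operator norm on the single space $T_{p^{*}}\mathcal{M}$.

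Next I would apply the Neumann-series perturbation argument on $T_{p^{*}}\mathcal{M}$: writing $A(p)=\nabla F(p^{*})\bigl(\mathrm{id}+\nabla F(p^{*})^{-1}(A(p)-\nabla F(p^{*}))\bigr)$ and shrinking the neighborhood $\mathcal{U}$ of $p^{*}$ until $\|A(p)-\nabla F(p^{*})\|\le \tfrac12\,\|\nabla F(p^{*})^{-1}\|^{-1}$ holds on $\mathcal{U}$, the Neumann series shows that $A(p)$ is invertible with $\|A(p)^{-1}\|\le 2\,\|\nabla F(p^{*})^{-1}\|$. Since $\mathrm{T}_{\eta}$ is an isometric isomorphism, $\nabla F(p)=\mathrm{T}_{\eta}\,A(p)\,\mathrm{T}_{\eta}^{-1}$ is then invertible as well, with $\nabla F(p)^{-1}=\mathrm{T}_{\eta}\,A(p)^{-1}\,\mathrm{T}_{\eta}^{-1}$, and hence $\|\nabla F(p)^{-1}\|=\|A(p)^{-1}\|\le 2\,\|\nabla F(p^{*})^{-1}\|$; taking $\Xi:=2\,\|\nabla F(p^{*})^{-1}\|$ finishes the argument.

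The work here is organizational rather than deep: the one point that needs care is making precise what ``$p\mapsto\nabla F(p)$ continuous at $p^{*}$'' should mean when the operators live on varying fibers, and checking that conjugation by the isometric transport converts that abstract continuity into honest operator-norm continuity on a single space, after which the estimate is the textbook perturbation bound. If one is willing to assume $F\in C^{2}$, Lemma~\ref{lem:LipCountNabulaF} even supplies this continuity with an explicit Lipschitz modulus, but mere continuity is all that is used.
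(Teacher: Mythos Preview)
The paper does not actually prove this lemma; it is simply quoted from \cite[Lemma~3.2]{fernandes2017superlinear} with no argument supplied, so there is no in-paper proof to compare against. Your proposal is a correct and standard proof: pulling back $\nabla F(p)$ to the fixed space $T_{p^{*}}\mathcal{M}$ via an isometric transport and then invoking the Neumann-series perturbation bound is exactly the right way to handle the varying-fiber issue, and you have correctly identified that the only real content is interpreting the continuity hypothesis as operator-norm continuity of the conjugated family $A(p)$ on a single space.
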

\begin{lemma}[{\cite[Lemma 14.5]{gallivan2012riemannian}}]\label{lem:dist_zero}
	Let $F$ be a  $C^2$ vector field on $\mathcal{M}$ and $p^{*} \in \mathcal{M}$. If $ F(p^{*})=0 $ and $\nabla F(p^{*}) $ is nonsingular, then there exist a neighborhood $\mathcal{U}$ of $p^{*}$ and constants $c_{3}, c_{4} > 0$ such that, for all $p \in \mathcal{U}$, $ c_{3} d(p, p^{*}) \leq \|F(p)\| \leq c_{4} d(p, p^{*}).$
\end{lemma}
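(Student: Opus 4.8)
The plan is to linearize $F$ at $p^{*}$ by means of Lemma \ref{lem:R1est} and then to dominate the resulting quadratic remainder by shrinking the neighborhood. Put $\bar{x}=p^{*}$; for $p$ close to $p^{*}$ set $\eta:=\mathrm{R}_{p^{*}}^{-1}(p)$ and let $\mathrm{P}_{\gamma}$ be the parallel transport along $\gamma(t):=\mathrm{R}_{p^{*}}(t\eta)$. Since $F(p^{*})=0$, Lemma \ref{lem:R1est} provides a neighborhood $\mathcal{U}_{0}$ of $p^{*}$ and a constant $c_{2}>0$ with
\[
\left\|\mathrm{P}_{\gamma}^{0\to 1}[F(p)]-\nabla F(p^{*})[\eta]\right\|\le c_{2}\,d^{2}(p^{*},p)
\quad\text{for all }p\in\mathcal{U}_{0}.
\]
As parallel transport is isometric, $\|\mathrm{P}_{\gamma}^{0\to 1}[F(p)]\|=\|F(p)\|$, so this single estimate is the backbone of both inequalities.

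For the upper bound, I would use the triangle inequality to get $\|F(p)\|\le\|\nabla F(p^{*})\|\,\|\eta\|+c_{2}d^{2}(p^{*},p)$, bound $\|\eta\|$ above by $a_{0}^{-1}d(p^{*},p)$ via part (ii) of Lemma \ref{lem:distR1}, and then pass to a neighborhood $\mathcal{U}_{1}\subset\mathcal{U}_{0}$ so small that $d(p^{*},p)\le\delta$ on it; this gives $c_{2}d^{2}(p^{*},p)\le c_{2}\delta\,d(p^{*},p)$ and hence $\|F(p)\|\le c_{4}\,d(p,p^{*})$ with $c_{4}:=\|\nabla F(p^{*})\|/a_{0}+c_{2}\delta$.

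For the lower bound, I would combine the reverse triangle inequality, $\|\nabla F(p^{*})[\eta]\|\le\|F(p)\|+c_{2}d^{2}(p^{*},p)$, with the nonsingularity of $\nabla F(p^{*})$, which yields $\|\nabla F(p^{*})[\eta]\|\ge\|\eta\|/\|\nabla F(p^{*})^{-1}\|$, and again part (ii) of Lemma \ref{lem:distR1}, now in the form $\|\eta\|\ge a_{1}^{-1}d(p,p^{*})$. After rearranging one arrives at
\[
\|F(p)\|\ge\left(\frac{1}{a_{1}\,\|\nabla F(p^{*})^{-1}\|}-c_{2}\,d(p^{*},p)\right)d(p,p^{*}).
\]
The decisive move is then to shrink once more to a neighborhood $\mathcal{U}\subset\mathcal{U}_{1}$ on which $c_{2}\,d(p^{*},p)<\tfrac{1}{2}(a_{1}\|\nabla F(p^{*})^{-1}\|)^{-1}$, so that the parenthesis stays above the positive constant $c_{3}:=\tfrac{1}{2}(a_{1}\|\nabla F(p^{*})^{-1}\|)^{-1}$, which finishes the argument.

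The main obstacle here is organizational rather than analytic: one must verify that a single neighborhood $\mathcal{U}$ can be chosen on which Lemma \ref{lem:R1est}, part (ii) of Lemma \ref{lem:distR1}, and the two smallness requirements all hold simultaneously. This is immediate, since each is an ``on a sufficiently small neighborhood of $p^{*}$'' statement and only finitely many are involved, so their intersection serves; note also that only the nonsingularity of $\nabla F$ \emph{at the point} $p^{*}$ is needed, entering the proof solely through the finite operator norm $\|\nabla F(p^{*})^{-1}\|$.
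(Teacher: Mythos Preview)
Your proof is correct. Note, however, that the paper does not supply its own proof of this lemma: it is merely cited from \cite[Lemma 14.5]{gallivan2012riemannian}, so there is no in-paper argument to compare against. Your approach---using Lemma~\ref{lem:R1est} together with $F(p^{*})=0$ and the isometry of parallel transport to obtain a first-order Taylor-type estimate, then invoking Lemma~\ref{lem:distR1}(ii) to pass between $\|\eta\|$ and $d(p,p^{*})$, and finally absorbing the quadratic remainder by shrinking the neighborhood---is the natural one and is entirely in the spirit of the surrounding auxiliary lemmas in Section~\ref{sec:Preliminaries}. The constants you extract, $c_{4}=\|\nabla F(p^{*})\|/a_{0}+c_{2}\delta$ and $c_{3}=\tfrac{1}{2}(a_{1}\|\nabla F(p^{*})^{-1}\|)^{-1}$, are exactly what one expects, and your closing remark that only finitely many ``sufficiently small neighborhood'' conditions need to be intersected is the right way to dispose of the organizational issue.
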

\begin{lemma}[{\cite[Lemma 3.5]{do1992riemannian}}]\label{lem:gauss}
	Let $u \in T_{p} \mathcal{M}$ such that $\operatorname{Exp}_{p}(u)$ exists and $v \in T_p \mathcal{M} \cong T_p \left(T_p \mathcal{M}\right)$.
	Then
	$\langle
	\mathcal{D} \operatorname{Exp}_{p}(u)[u],
	\mathcal{D} \operatorname{Exp}_{p}(u)[v]
	\rangle
	=\langle u, v\rangle.$
	In particular, $\left\|\mathcal{D} \operatorname{Exp}_{p} (\lambda u)[u]\right\|=\left\|u\right\|$ holds all $\lambda \geq 0.$
\end{lemma}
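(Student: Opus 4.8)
The plan is to establish the polarization-type identity by the classical Gauss Lemma argument: decompose $v$ into a component radial to $u$ and a component orthogonal to it, and treat the two pieces separately. Assume $u \neq 0_p$ (the case $u=0_p$ is immediate from $\mathcal{D}\operatorname{Exp}_p(0_p)=\mathrm{id}_{T_p\mathcal{M}}$), and write $v = v^{\parallel}+v^{\perp}$ with $v^{\parallel}:=\frac{\langle u,v\rangle}{\langle u,u\rangle}\,u$ and $\langle u, v^{\perp}\rangle = 0$.

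For the radial part, note that $t \mapsto \operatorname{Exp}_p(tu)$ is the geodesic through $p$ with velocity $u$ at $t=0$, hence has constant speed $\|u\|$; its velocity at $t=1$ is precisely $\mathcal{D}\operatorname{Exp}_p(u)[u]$, so $\|\mathcal{D}\operatorname{Exp}_p(u)[u]\|^2 = \|u\|^2 = \langle u,u\rangle$. By linearity of $\mathcal{D}\operatorname{Exp}_p(u)$ this gives $\langle \mathcal{D}\operatorname{Exp}_p(u)[u], \mathcal{D}\operatorname{Exp}_p(u)[v^{\parallel}]\rangle = \frac{\langle u,v\rangle}{\langle u,u\rangle}\|u\|^2 = \langle u,v\rangle = \langle u, v^{\parallel}\rangle$.

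The heart of the argument is the orthogonal part, where I must show $\langle \mathcal{D}\operatorname{Exp}_p(u)[u], \mathcal{D}\operatorname{Exp}_p(u)[v^{\perp}]\rangle = 0$. I would pick a smooth curve $s \mapsto u(s)$ in $T_p\mathcal{M}$ with $u(0)=u$, $u'(0)=v^{\perp}$, and $\|u(s)\|\equiv \|u\|$ (for instance, by rotating $u$ within the plane spanned by $u$ and $v^{\perp}$), and define the two-parameter variation $h(t,s):=\operatorname{Exp}_p\!\big(t\,u(s)\big)$ for $t\in[0,1]$ and $s$ near $0$. Each $t\mapsto h(t,s)$ is a geodesic of speed $\|u(s)\|=\|u\|$, so $\langle \partial_t h,\partial_t h\rangle \equiv \|u\|^2$. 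Differentiating $\langle \partial_t h,\partial_s h\rangle$ in $t$, using $\nabla_{\partial_t}\partial_t h = 0$ and the symmetry of the Levi-Civita connection ($\nabla_{\partial_t}\partial_s h = \nabla_{\partial_s}\partial_t h$), one obtains $\partial_t\langle \partial_t h,\partial_s h\rangle = \langle \partial_t h,\nabla_{\partial_s}\partial_t h\rangle = \tfrac12\partial_s\langle \partial_t h,\partial_t h\rangle = \tfrac12\partial_s\|u(s)\|^2 = 0$. Hence $\langle \partial_t h,\partial_s h\rangle$ is independent of $t$; since $h(0,s)\equiv p$, we have $\partial_s h(0,s)=0$, so the quantity vanishes for all $t$. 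Evaluating at $(t,s)=(1,0)$ and identifying $\partial_t h(1,0)=\mathcal{D}\operatorname{Exp}_p(u)[u]$ and $\partial_s h(1,0)=\mathcal{D}\operatorname{Exp}_p(u)[u'(0)]=\mathcal{D}\operatorname{Exp}_p(u)[v^{\perp}]$ gives the claim. Adding the radial and orthogonal parts and invoking linearity of $\mathcal{D}\operatorname{Exp}_p(u)$ yields $\langle \mathcal{D}\operatorname{Exp}_p(u)[u], \mathcal{D}\operatorname{Exp}_p(u)[v]\rangle = \langle u,v\rangle$.

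For the ``in particular'' statement, specialize $v=u$ to get $\|\mathcal{D}\operatorname{Exp}_p(u)[u]\|^2 = \langle u,u\rangle$; applying this with $\lambda u$ in place of $u$ and using linearity (for $\lambda>0$; $\lambda=0$ is the retraction identity $\mathcal{D}\operatorname{Exp}_p(0_p)=\mathrm{id}$) gives $\|\mathcal{D}\operatorname{Exp}_p(\lambda u)[u]\| = \|u\|$ for all $\lambda\ge 0$. The main obstacle I anticipate is purely technical rather than conceptual: verifying that $h$ is a genuinely smooth variation on all of $[0,1]\times(-\varepsilon,\varepsilon)$ — which requires $\operatorname{Exp}_p(u(s))$ to be defined for $s$ near $0$, following from the openness of the domain of $\operatorname{Exp}_p$ together with completeness of $\mathcal{M}$ — and carefully justifying the interchange $\nabla_{\partial_t}\partial_s h = \nabla_{\partial_s}\partial_t h$ and the identification of $\partial_s h(1,0)$ with $\mathcal{D}\operatorname{Exp}_p(u)[v^{\perp}]$ via the chain rule for the map $(t,s)\mapsto t\,u(s)$.
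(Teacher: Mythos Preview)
Your argument is correct and is precisely the classical Gauss Lemma proof (the variation-through-geodesics argument) found in do~Carmo's textbook. The paper does not supply its own proof of this lemma; it simply cites \cite[Lemma~3.5]{do1992riemannian}, so there is nothing further to compare.
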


\section{Local Convergence}\label{sec:local}
Here, for any two nonnegative sequences $\left\{u_{k}\right\}$ and $\left\{v_{k}\right\}$, we write $u_{k}=O(v_{k})$ if there is a constant $M >0$ such that
$u_{k}\leq M v_{k}$
for all sufficiently large $k$; and we write
$u_{k}=o(v_{k})$
if $v_{k}>0$ and the sequence of ratios $\{u_{k} / v_{k}\} $ approaches zero.
In this section, we will establish local convergence of our prototype Algorithm \ref{algo:prototype_algo} of RIPM. 

\subsection{Perturbed Damped Riemannian Newton Method}
We will rely on an application of the so-called \emph{perturbed damped} Riemannian Newton method for solving the singularity problem (\ref{eq:Singularity}), which can be stated as Algorithm \ref{algo:PerturbedDampedNewtonMethod}.

\begin{algorithm}%[H]
	\caption{Perturbed Damped Riemannian Newton Method for (\ref{eq:Singularity})}
	\label{algo:PerturbedDampedNewtonMethod}
	\SetAlgoLined
	
	\KwIn{A vector field $F \in \mathfrak{X}(\mathcal{M})$, an initial point $p_0 \in \mathcal{M}$ and a retraction $\mathrm{R}$ on $\mathcal{M}$. Set $\mu_0>0$.}
	\KwOut{Sequence $\{p_k\} \subset \mathcal{M}$ such that $\left\{p_k\right\} \rightarrow p^*$ and $F(p^{*})=0_{p^{*}}$.}
	
	Set $k \to 0$\;
	
	\While{Stopping criterion not satisfied}{
		1. Obtain $\xi_k \in T_{p_k}\mathcal{M}$ by solving the perturbed Newton equation:
		\begin{equation}
			\nabla F(p_{k}) [\xi_k] = - F(p_{k}) + \mu_{k} \hat{e};
		\end{equation}
	
		2. Choose a (damped) step size $ 0<\alpha_{k} \leq 1$\;
		
		3. Compute the next point as $p_{k+1}:=\mathrm{R}_{p_{k}} (\alpha_{k} \xi_k)$\;
		
		4. Choose $0<\mu_{k+1}<\mu_k$\;
		
		5. $k \to k + 1$\;
	}
\end{algorithm}

In contrast to the standard Riemannian Newton method described in Algorithm \ref{algo:StandardNewton}, the term ``perturbed'' means that we solve a Newton equation with a perturbed term $\mu_{k} \hat{e}$, while ``damped'' means using $\alpha_k$ instead of unit steps.
It is well known that Algorithm \ref{algo:StandardNewton} are locally superlinearly \cite{fernandes2017superlinear} and quadratically \cite{ferreira2002kantorovich} convergent under the following Riemannian Newton assumptions:
\setlist[assumptions,1]{label=\textbf{(B\arabic*)}}
\begin{assumptions}[leftmargin=9mm]
	\item There exists $p^{*} \in \mathcal{M}$ such that $F(p^{*})=0_{p^{*}}$;
	\label{A1}
	\item The covariant derivative $\nabla F (p^{*})$ is nonsingular;
	\label{A2}
%	\item The map $p \mapsto \nabla F(p) $ is Lipschitz continuous with respect to on a neighborhood of $p^{*}$;
%	\label{A3}
	\item The vector field $F$ is $C^2$.
	\label{A3}
\end{assumptions}

As the following Proposition \ref{prop:PD_Newton} shows, Algorithm \ref{algo:PerturbedDampedNewtonMethod} also has the same convergence properties as Algorithm \ref{algo:StandardNewton} if we control $\mu_{k}$ and $\alpha_{k}$ according to the two schemes that Proposition \ref{prop:PD_Newton} gives.
We can see that either scheme will have $\mu_k \to 0$ and $\alpha_k \to 1$, which makes Algorithm \ref{algo:PerturbedDampedNewtonMethod} eventually reduce to Algorithm \ref{algo:StandardNewton} when $k$ is sufficiently large.

\begin{proposition}[Local convergence of Algorithm \ref{algo:PerturbedDampedNewtonMethod}]\label{prop:PD_Newton}
	Consider the perturbed damped Riemannian Newton method
	described in Algorithm \ref{algo:PerturbedDampedNewtonMethod} for the singularity problem (\ref{eq:Singularity}). 
	Let \ref{A1}-\ref{A3} hold.
	Choose parameters $ \mu_{k}, \alpha_{k}$ as follows; then there exists a constant $\delta>0$ such that for all $p_{0}\in \mathcal{M}$ with
	$
	d(p_{0} , p^{*})<\delta,
	$
	the sequence $\left\{p_{k}\right\}$ is well defined.
	Furthermore,
	\begin{enumerate}[(1)]
		\item if we choose $\mu_{k}=o(\|F(p_{k})\|)$ and $\alpha_{k} \to 1$, then $p_{k} \to p^{*}$ superlinearly;		
		\item if we choose $\mu_{k}=O(\|F(p_{k})\|^{2})$ and $1-\alpha_{k}=O(\|F(p_{k})\|)$, then $p_{k} \to p^{*}$ quadratically.
	\end{enumerate}
\end{proposition}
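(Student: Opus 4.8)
The plan is to reduce the analysis of Algorithm \ref{algo:PerturbedDampedNewtonMethod} to a perturbation of the standard Riemannian Newton step, following the classical template for local convergence of Newton-type methods but carried out in the Riemannian language already developed in Section \ref{sec:Preliminaries}. First I would fix a neighborhood $\mathcal{U}$ of $p^*$ on which several things hold simultaneously: by Lemma \ref{lem:nonsingular} (using \ref{A2}, \ref{A3}) the operator $\nabla F(p)$ is invertible with $\|\nabla F(p)^{-1}\|\le\Xi$; by Lemma \ref{lem:dist_zero} one has $c_3 d(p,p^*)\le\|F(p)\|\le c_4 d(p,p^*)$; by Lemma \ref{lem:R1est} (with $\bar x=p^*$) the second-order estimate $\|\mathrm{P}_\gamma^{0\to1}[F(p)]-F(p^*)-\nabla F(p^*)[\eta]\|\le c_2 d(p^*,p)^2$ holds; and by Lemma \ref{lem:distR1} the retraction displacement and the distance are comparable. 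Using $F(p^*)=0_{p^*}$, these combine in the usual way: writing $p_k^{\mathrm{N}}:=\mathrm{R}_{p_k}(\xi_k^{\mathrm{N}})$ for the \emph{undamped, unperturbed} Newton iterate from $p_k$, a standard manipulation (transport everything to $T_{p^*}\mathcal{M}$ or to $T_{p_k}\mathcal{M}$ via parallel transport, insert $\nabla F(p_k)^{-1}\nabla F(p_k)$, and invoke Lemma \ref{lem:R1est}) gives $d(p_k^{\mathrm{N}},p^*)=O(d(p_k,p^*)^2)$ — i.e.\ the pure Riemannian Newton method is locally quadratically convergent, which is the known result from \cite{ferreira2002kantorovich}.

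Next I would quantify the two deviations from the pure Newton step. The \emph{perturbation} changes the step from $\xi_k^{\mathrm{N}}$ to $\xi_k=\xi_k^{\mathrm{N}}+\mu_k\nabla F(p_k)^{-1}\hat e$, so $\|\xi_k-\xi_k^{\mathrm{N}}\|\le\Xi\,\mu_k\|\hat e\|=O(\mu_k)$. The \emph{damping} replaces $\xi_k$ by $\alpha_k\xi_k$, contributing an extra displacement of size $(1-\alpha_k)\|\xi_k\|$. Since $\|\xi_k^{\mathrm{N}}\|=O(d(p_k,p^*))$ (from $\|\xi_k^{\mathrm{N}}\|\le\Xi\|F(p_k)\|\le\Xi c_4 d(p_k,p^*)$) and $\|\xi_k\|\le\|\xi_k^{\mathrm{N}}\|+O(\mu_k)$, the triangle inequality for $d$ together with the Lipschitz-type bound $d(\mathrm{R}_{p_k}(\alpha_k\xi_k),\mathrm{R}_{p_k}(\xi_k^{\mathrm{N}}))\le a_1\|\alpha_k\xi_k-\xi_k^{\mathrm{N}}\|$ from (i) of Lemma \ref{lem:distR1} yields
\begin{equation*}
d(p_{k+1},p^*)\le d(p_k^{\mathrm{N}},p^*)+a_1\big(\|\xi_k-\xi_k^{\mathrm{N}}\|+(1-\alpha_k)\|\xi_k\|\big)
= O\!\big(d(p_k,p^*)^2\big)+O(\mu_k)+O\!\big((1-\alpha_k)d(p_k,p^*)\big)+O\!\big((1-\alpha_k)\mu_k\big).
\end{equation*}
Now I substitute the two parameter regimes, using $\|F(p_k)\|\asymp d(p_k,p^*)$ from Lemma \ref{lem:dist_zero}. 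In case (1), $\mu_k=o(\|F(p_k)\|)=o(d(p_k,p^*))$ and $1-\alpha_k=o(1)$, so every term on the right is $o(d(p_k,p^*))$, giving $d(p_{k+1},p^*)=o(d(p_k,p^*))$, i.e.\ superlinear convergence. In case (2), $\mu_k=O(\|F(p_k)\|^2)=O(d(p_k,p^*)^2)$ and $1-\alpha_k=O(\|F(p_k)\|)=O(d(p_k,p^*))$, so all four terms are $O(d(p_k,p^*)^2)$, giving quadratic convergence.

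For well-definedness I would argue by induction: choose $\delta>0$ small enough that the ball $B(p^*,\delta)\subset\mathcal{U}$ and that the constant in the estimate above, call it $C$, satisfies $C\delta<1$ (in case (2); in case (1) shrink $\delta$ so the $o(\cdot)$ and the vanishing $1-\alpha_k$ force the contraction factor below $1$ for large $k$, handling the finitely many early iterates by a further shrinking of $\delta$). Then $d(p_0,p^*)<\delta$ implies $p_0\in\mathcal{U}$ so $\xi_0$ exists and $\mathrm{R}_{p_0}^{-1}$, $\nabla F(p_0)^{-1}$ are defined; the estimate gives $d(p_1,p^*)<\delta$, and the induction continues. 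The main obstacle — and the place where care is genuinely needed rather than routine — is the bookkeeping of parallel transports when deriving $d(p_k^{\mathrm{N}},p^*)=O(d(p_k,p^*)^2)$: one must transport $F(p_k)$, $\nabla F(p_k)^{-1}$, and $\xi_k^{\mathrm{N}}$ between $T_{p_k}\mathcal{M}$ and $T_{p^*}\mathcal{M}$ consistently and control the discrepancy $\mathrm{R}_{p_k}^{-1}(p^*)$ versus $-\mathrm{P}_\gamma[\,\mathrm{R}_{p^*}^{-1}(p_k)\,]$ using Lemma \ref{lem:distR1} and, if $\mathrm{R}=\operatorname{Exp}$, Lemma \ref{lem:gauss}; this is exactly the computation underlying the cited results \cite{fernandes2017superlinear,ferreira2002kantorovich}, and once it is in place the perturbation and damping terms are additive and harmless as shown above.
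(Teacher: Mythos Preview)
Your proposal is correct and arrives at the same three-term recursion
\[
d(p_{k+1},p^*)\;\le\;\kappa_1(1-\alpha_k)\,d(p_k,p^*)\;+\;\kappa_2\,d(p_k,p^*)^2\;+\;\kappa_3\,\mu_k
\]
that the paper derives, after which the two cases are handled identically using Lemma \ref{lem:dist_zero}. The route, however, differs in one organizational respect worth noting.

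You split the step as $\alpha_k\xi_k=\xi_k^{\mathrm N}+(\xi_k-\xi_k^{\mathrm N})-(1-\alpha_k)\xi_k$, invoke the known quadratic convergence of the \emph{pure} Newton iterate $p_k^{\mathrm N}$ as a black box, and then add the perturbation and damping errors via the triangle inequality on $\mathcal{M}$ together with Lemma \ref{lem:distR1}(i). The paper instead works entirely in $T_{p_k}\mathcal{M}$: it sets $\eta:=\mathrm{R}_{p_k}^{-1}(p^*)$, bounds $d(p_{k+1},p^*)\le a_1\|\eta-\alpha_k\xi_k\|$ directly by Lemma \ref{lem:distR1}(i), rewrites
\[
\eta-\alpha_k\xi_k=(1-\alpha_k)\eta+\alpha_k\nabla F(p_k)^{-1}\!\left[\nabla F(p_k)\eta+F(p_k)-\mathrm{P}_\gamma^{0\to1}F(p^*)-\mu_k\hat e\right],
\]
and applies Lemma \ref{lem:R1est} with base point $p_k$ (not $p^*$) to the bracketed term. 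This yields the three-term bound in one shot, without a separate appeal to the pure-Newton result. The practical payoff is that the ``bookkeeping of parallel transports'' you flag as the delicate part---relating $\mathrm{R}_{p_k}^{-1}(p^*)$ to quantities based at $p^*$---is entirely absorbed into Lemma \ref{lem:R1est}, so no transport between $T_{p_k}\mathcal{M}$ and $T_{p^*}\mathcal{M}$ is ever needed. Your modular decomposition is perfectly valid and perhaps more transparent conceptually; the paper's single-estimate version is shorter and self-contained.
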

\begin{proof}
By \ref{A2}-\ref{A3}, Lemma \ref{lem:LipCountNabulaF} and Lemma \ref{lem:nonsingular}, we can let $p_{k} $ be sufficiently close to $p^{*}$ such that $\nabla F(p_{k})$ is nonsingular, and
$\left\|\nabla F(p_{k})^{-1}\right\| \leq \Xi$ for some constant $\Xi$.
Then, the next iterate point,
\begin{equation*}
p_{k+1}:=\mathrm{R}_{p_{k}}[\alpha_{k} \nabla F(p_{k})^{-1}(-F(p_{k})+\mu_{k} \hat{e})],
\end{equation*}
is well defined in Algorithm \ref{algo:PerturbedDampedNewtonMethod}, and it follows from $ p^{*}=\mathrm{R}_{p_{k}}(\eta) $ with $ \eta := \mathrm{R}_{p_{k}}^{-1} p^{*} $ and (i) of Lemma \ref{lem:distR1} that
\begin{align}
	d(p_{k+1} , p^{*})
	&\leq
	a_{1}
	\| \eta - \alpha_{k} \nabla F(p_{k})^{-1}(-F(p_{k})+\mu_{k} \hat{e}) \| \notag \\
	&=
	a_{1}  
	\|\eta + \alpha_{k} \nabla F(p_{k})^{-1}(F(p_{k})-\mu_{k} \hat{e})\|. \label{eq:1114}
\end{align}
Let 
$r_{k} := \eta + \alpha_{k} \nabla F(p_{k})^{-1}(F(p_{k})-\mu_{k} \hat{e})$.
Algebraic manipulations
show that
\begin{equation*}
r_{k}=
(1-\alpha_{k}) \eta  +
\alpha_{k} \nabla F(p_{k})^{-1} 
[\nabla F(p_{k}) \eta + F(p_{k})-\mathrm{P}_{\gamma}^{0 \to 1} F(p^{*})-\mu_{k} \hat{e}],
\end{equation*}
where $\mathrm{P}_{\gamma}$ is the parallel transport along the curve $\gamma(t)=\mathrm{R}_{p_{k}}(t \eta)$ and $F(p^{*})=0.$
Thus, using $\left\|\eta\right\| \leq d(p_{k}, p^{*})/a_{0}$ from (ii) of Lemma \ref{lem:distR1} and Lemma \ref{lem:R1est}, we have
\begin{align*}
	\| r_{k}\|&\leq
	(1-\alpha_{k}) \| \eta\| \notag 
	+ 
	\alpha_{k} \|\nabla F(p_{k})^{-1} \|
	\|\mathrm{P}_{\gamma}^{0 \to 1} F(p^{*}) - F(p_{k}) -
	\nabla F(p_{k}) \eta \| \\
	& \quad + \alpha_{k} \|\nabla F(p_{k})^{-1} \| \|\hat{e}\| \mu_{k} \\ 
	&\leq (1-\alpha_{k})d(p_{k} , p^{*}) /a_{0} +
	\alpha_{k} \|\nabla F(p_{k})^{-1} \| 
	c_{2} d^{2}(p_{k} , p^{*})\\
	& \quad
	+ \alpha_{k}\|\nabla F(p_{k})^{-1} \|  \|\hat{e}\| \mu_{k}\\
	&\leq (1-\alpha_{k})d(p_{k} , p^{*}) /a_{0} +
	\Xi 
	c_{2} d^{2}(p_{k} , p^{*})
	+ \Xi  \|\hat{e}\| \mu_{k}.
\end{align*}
Combining the above with (\ref{eq:1114}), we conclude that
\begin{equation}\label{77}
	d(p_{k+1} , p^{*}) \leq \kappa_{1}(1-\alpha_{k}) d(p_{k} , p^{*}) 
	+ \kappa_{2} d^{2}(p_{k} , p^{*})
	+ \kappa_{3} \mu_{k}
\end{equation}
for some positive constants $ \kappa_{1},\kappa_{2},\kappa_{3}$.
On the other hand, by Lemma \ref{lem:dist_zero}, we have
\begin{equation}\label{eq:LipFbigoh}
	\left\|F(p_{k})\right\|=O(d(p_{k}, p^{*})).
\end{equation}
In what follows, we prove assertions (1) and (2).

\textbf{(1)} 
Suppose that $\alpha_{k} \to 1$ and $\mu_{k}=o(\|F(p_{k})\|)$, which together with (\ref{eq:LipFbigoh}) imply $\mu_{k}=o(d(p_{k}, p^{*}))$.
By (\ref{77}), we have
\begin{equation}\label{eq:superlinear}
	\frac{d(p_{k+1} , p^{*})}{d(p_{k} , p^{*})}\leq
	\kappa_{1} (1-\alpha_{k})
	+\kappa_{2} d(p_{k} , p^{*})
	+\kappa_{3}\frac{\mu_{k}}{d(p_{k} , p^{*})}.
\end{equation}
We can take $\delta$ sufficiently small and $ k $ sufficiently large, if necessary, to conclude that
$
d(p_{k+1} , p^{*})<\frac{1}{2}d(p_{k} , p^{*}) <\delta.
$
Thus, $p_{k+1} \in B_{\delta}(p^{*})$, the open ball of radius $\delta$ centered at $p^{*}$ on $\mathcal{M}$. 
By induction, it is easy to show that the sequence $\left\{p_{k}\right\}$ is well defined and converges to $p^{*}$. 
Taking the limit of both sides of (\ref{eq:superlinear}) proves superlinear convergence.

\textbf{(2)} 
Again, we start from (\ref{77}) and rewrite it as:
\begin{align}\label{eq:bigoh}
	d(p_{k+1} , p^{*}) = (1-\alpha_{k}) O(d(p_{k} , p^{*}) )
	+O(d^{2}(p_{k} , p^{*}) )
	+O(\mu_{k}).
\end{align}
Suppose that $1-\alpha_{k}=O(\left\|F(p_{k})\right\|)$
and $\mu_{k}=O(\left\|F(p_{k})\right\|^{2})$.
Using (\ref{eq:LipFbigoh}), the above reduces to
$
d(p_{k+1} , p^{*}) = O(d^{2}(p_{k} , p^{*})).
$
This implies that there exists a constant $\nu$ such that $d(p_{k+1} , p^{*}) \leq \nu d^{2}(p_{k} , p^{*})$, and hence,
$
d(p_{k+1} , p^{*}) \leq \nu d^{2}(p_{k} , p^{*}) \leq \nu \delta^{2}<\delta,
$
if $\delta$ is sufficiently small.
Again, by induction, $\left\{p_{k}\right\}$ converges to $p^{*}$ quadratically.
\end{proof}

\subsection{Local Convergence of Algorithm \ref{algo:prototype_algo}}
Next, lemma shows the relationship between the parameter $ \gamma_{k} $ and step size $ \alpha_{k} $ in Algorithm \ref{algo:prototype_algo}. 
\begin{lemma}\label{lem:gamma_alpha}
	Consider the Algorithm \ref{algo:prototype_algo} for solving the problem (\ref{RCOP}).  
	Let \ref{B1} and \ref{B4} hold at some $w^{*}=(x^{*}, y^{*}, z^{*}, s^{*})$ and $ \alpha_{k} $ be as in (\ref{rule:StepSizeA}). 
	Define a constant,
	\begin{equation*}
	\Pi := 2 \max \left\{\max _{i}\left\{
	1/(s^{*})_{i}
	\mid (s^{*})_{i}>0\right\}, \max _{i}\left\{
	1/(z^{*})_{i}
	\mid (z^{*})_{i}>0\right\}\right\}.
	\end{equation*}
	For $\gamma_{k} \in(0,1)$, if
	$
	\Pi \left\| \Delta w_{k}\right\| \leq \gamma_{k},
	$
	then
	$
	0  \leq 1-\alpha_{k} \leq
	(1-\gamma_{k})+\Pi \left\| \Delta w_{k}\right\|.
	$
\end{lemma}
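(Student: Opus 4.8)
The plan is to control $1-\alpha_k$ by treating the three competitors in the $\min$ defining $\alpha_k$ in (\ref{rule:StepSizeA}) separately. The bound $1-\alpha_k\ge 0$ is immediate from $\alpha_k\le 1$. For the upper bound, write $\alpha_k=\min\{1,T_s,T_z\}$ with $T_s:=\gamma_k\min_i\{-(s_k)_i/(\Delta s_k)_i\mid(\Delta s_k)_i<0\}$ and $T_z$ its analogue with $z$ replacing $s$ (using the convention $T_s=+\infty$, resp. $T_z=+\infty$, when the corresponding index set is empty). Then $1-\alpha_k=\max\{0,\,1-T_s,\,1-T_z\}$, and since the target right-hand side $(1-\gamma_k)+\Pi\|\Delta w_k\|$ is nonnegative, it suffices to prove $1-T_s\le(1-\gamma_k)+\Pi\|\Delta w_k\|$ and, by the evident symmetry, $1-T_z\le(1-\gamma_k)+\Pi\|\Delta w_k\|$. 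I will only write the argument for $T_s$.

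If $T_s\ge 1$ there is nothing to do, so suppose $T_s<1$ and let $i^{*}$ attain the minimum, so $(\Delta s_k)_{i^{*}}<0$ and $T_s=\gamma_k(s_k)_{i^{*}}/|(\Delta s_k)_{i^{*}}|$. A one-line rearrangement gives the identity $1-T_s=(1-\gamma_k)+\gamma_k\bigl(|(\Delta s_k)_{i^{*}}|-(s_k)_{i^{*}}\bigr)/|(\Delta s_k)_{i^{*}}|$, so the task is to bound the ``overshoot'' $|(\Delta s_k)_{i^{*}}|-(s_k)_{i^{*}}$. If it is $\le 0$ we are done. Otherwise, the crucial step is to invoke the fourth block of the Newton equation (\ref{eq:prototype}) at coordinate $i^{*}$, namely $(z_k)_{i^{*}}(\Delta s_k)_{i^{*}}+(s_k)_{i^{*}}(\Delta z_k)_{i^{*}}=-(z_k)_{i^{*}}(s_k)_{i^{*}}+\mu_k$, which rearranges to $|(\Delta s_k)_{i^{*}}|-(s_k)_{i^{*}}=\bigl((s_k)_{i^{*}}(\Delta z_k)_{i^{*}}-\mu_k\bigr)/(z_k)_{i^{*}}\le(s_k)_{i^{*}}(\Delta z_k)_{i^{*}}/(z_k)_{i^{*}}$, where dropping $-\mu_k<0$ only helps us (and the overshoot being positive forces $(\Delta z_k)_{i^{*}}>0$). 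Substituting this and using $T_s<1$ together with the coordinatewise bound $|(\Delta z_k)_{i^{*}}|\le\|\Delta w_k\|$ gives $1-T_s\le(1-\gamma_k)+T_s(\Delta z_k)_{i^{*}}/(z_k)_{i^{*}}\le(1-\gamma_k)+\|\Delta w_k\|/(z_k)_{i^{*}}$.

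It remains to show $(z_k)_{i^{*}}\ge 1/\Pi$, and this is where \ref{B1}, \ref{B4}, the definition of $\Pi$, and the fact that $w_k$ lies near $w^{*}$ all come in. First, for any index $i$ with $(s^{*})_i>0$ we have $1/\Pi\le(s^{*})_i/2\le(s_k)_i$ (the last inequality by continuity, for $w_k$ close enough to $w^{*}$) and $|(\Delta s_k)_i|\le\|\Delta w_k\|\le\gamma_k/\Pi\le\gamma_k(s^{*})_i/2\le\gamma_k(s_k)_i$, so if in addition $(\Delta s_k)_i<0$ then $-(s_k)_i/(\Delta s_k)_i\ge 1/\gamma_k$; such an index can therefore never be the minimizer defining $T_s$ when $T_s<1$. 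Hence $(s^{*})_{i^{*}}=0$, and strict complementarity \ref{B4} yields $(z^{*})_{i^{*}}>0$, whence $(z_k)_{i^{*}}\ge(z^{*})_{i^{*}}/2\ge 1/\Pi$ for $w_k$ near $w^{*}$. Plugging this into the last display gives $1-T_s\le(1-\gamma_k)+\Pi\|\Delta w_k\|$, and running the same argument with the roles of $(s,z)$ and of the index sets $\{i:(s^{*})_i>0\}$ and $\{i:(z^{*})_i>0\}$ interchanged disposes of $T_z$.

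I expect the main obstacle to be exactly this coupling step: using the complementarity block of the Newton equation to trade the (possibly huge) ratio $(s_k)_{i^{*}}/|(\Delta s_k)_{i^{*}}|$ against the more tractable $(\Delta z_k)_{i^{*}}/(z_k)_{i^{*}}$, and then arguing that the binding index $i^{*}$ must be one at which the limiting multiplier $(z^{*})_{i^{*}}$ is strictly positive — which is precisely what makes strict complementarity \ref{B4} (and staying in a neighbourhood of $w^{*}$, so that $(s_k)_i$ and $(z_k)_i$ remain within a factor two of $(s^{*})_i$ and $(z^{*})_i$) indispensable. Everything else is routine bookkeeping with (\ref{rule:StepSizeA}) and the constant $\Pi$.
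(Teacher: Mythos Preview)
Your argument is correct and is essentially a written-out version of what the paper does by citation: the paper observes that the fourth (complementarity) block of the perturbed Newton equation reads $S_k^{-1}\Delta s_k+Z_k^{-1}\Delta z_k=\mu_k(S_kZ_k)^{-1}e-e$, which is identical to the Euclidean case, and then defers the entire estimate to \cite[Lemmas~3 and~4]{yamashita1996superlinear}. Your proof reproduces that Euclidean computation in detail---using the same coupling identity from the complementarity block to swap $(s_k)_{i^*}/|(\Delta s_k)_{i^*}|$ for $(\Delta z_k)_{i^*}/(z_k)_{i^*}$, and the same strict-complementarity argument to force $(z^*)_{i^*}>0$---so the approaches coincide. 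One remark: you correctly rely on $w_k$ being close enough to $w^*$ that $(s_k)_i\ge(s^*)_i/2$ and $(z_k)_i\ge(z^*)_i/2$ for the relevant indices; this proximity hypothesis is implicit in the local-convergence context (and explicit in the Yamashita--Yabe lemmas the paper cites) even though it is not spelled out in the lemma statement here.
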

\begin{proof} 
Notice that the fourth line of (\ref{eq:FullNewtonEquation}) yields
$$
S_{k}^{-1} \Delta s_{k}+Z_{k}^{-1} \Delta z_{k}=\mu_{k}(S_{k} Z_{k})^{-1} e-e,
$$
which is exactly the same as in the usual interior point method in the Euclidean setting. 
Thus, the proof entails directly applying \cite[Lemma 3 and 4]{yamashita1996superlinear} for the Euclidean case to the Riemannian case.
\end{proof}

Now, let us establish the local convergence of our Algorithm \ref{algo:prototype_algo} in a way that replicates Proposition \ref{prop:PD_Newton} except for taking account of parameter $\gamma_{k} $.
\begin{theorem}[Local convergence of prototype Algorithm \ref{algo:prototype_algo}]\label{thm:LocalConvergence}
	Consider the Algorithm \ref{algo:prototype_algo} for solving the problem (\ref{RCOP}).  
	Let \ref{B1}-\ref{B5} hold at some $w^{*}$.
	Choose parameters $\mu_k, \gamma_k$ as follows;
	then there exists a constant $\delta>0$ such that, for all
	$
	w_{0} \in\mathcal{N}
	$
	with $d(w_{0} , w^{*})<\delta,$
	the sequence $\left\{w_{k}\right\}$ is well defined. Furthermore,
	\begin{enumerate}[(1)]
		\item if we choose $\mu_{k}=o(\left\|F(w_{k})\right\|)$ and $\gamma_{k} \to 1$, then $w_k \to w^*$ superlinearly;	
		\item if we choose $\mu_{k}=O(\left\|F(w_{k})\right\|^{2})$
		and $1-\gamma_{k}=O(\left\|F(w_{k})\right\|)$, then $w_k \to w^*$ quadratically.
	\end{enumerate}
\end{theorem}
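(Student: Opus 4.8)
The plan is to read Algorithm~\ref{algo:prototype_algo} as a special case of the perturbed damped Riemannian Newton method of Algorithm~\ref{algo:PerturbedDampedNewtonMethod}, applied to the KKT vector field $F\colon\mathcal{N}\to T\mathcal{N}$ in (\ref{eq:KKTVectorField}) with the product retraction $\bar{\mathrm{R}}$ of (\ref{eq:barR}), and then to invoke Proposition~\ref{prop:PD_Newton}. First I would verify that the Riemannian Newton assumptions \ref{A1}--\ref{A3} hold for this $F$ on $\mathcal{N}$: \ref{A1} is exactly \ref{B1}; \ref{A2} is precisely the conclusion of Theorem~\ref{prop:non} under \ref{B1}--\ref{B5}; and \ref{A3}, that $F$ is $C^2$, follows from the smoothness of $f$, $h$, $g$ and the formula (\ref{eq:KKTVectorField}). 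Step~1 of Algorithm~\ref{algo:prototype_algo} is the perturbed Newton equation of Algorithm~\ref{algo:PerturbedDampedNewtonMethod}, the update $w_{k+1}=\bar{\mathrm{R}}_{w_k}(\alpha_k\Delta w_k)$ is the damped retraction step, and the $\alpha_k\in(0,1]$ produced by (\ref{rule:StepSizeA}) is an admissible damping factor; moreover (\ref{rule:StepSizeA}) keeps $(z_{k+1},s_{k+1})>0$, so the iterates stay in the region of interest. Hence the only real task is to translate the step-size hypotheses of Proposition~\ref{prop:PD_Newton}, stated in terms of $\alpha_k$, into the stated hypotheses on $\gamma_k$.

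The translation runs through Lemma~\ref{lem:gamma_alpha}. The preliminary estimate I would establish is $\|\Delta w_k\|=O(\|F(w_k)\|)$: since $\nabla F(w^*)$ is nonsingular and $w\mapsto\nabla F(w)$ is continuous, Lemma~\ref{lem:nonsingular} supplies a neighborhood of $w^*$ and a constant $\Xi$ with $\|\nabla F(w_k)^{-1}\|\le\Xi$ there, so $\|\Delta w_k\|\le\Xi(\|F(w_k)\|+\mu_k\|\hat e\|)$ with $\|\hat e\|=\sqrt{m}$; in both cases (1) and (2) the control on $\mu_k$ forces $\mu_k=o(\|F(w_k)\|)$ once $w_k$ is close to $w^*$ (in case (2) because $\mu_k=O(\|F(w_k)\|^2)$ and $\|F(w_k)\|\to 0$), whence $\|\Delta w_k\|=O(\|F(w_k)\|)$. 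Combining with Lemma~\ref{lem:dist_zero} gives $\|\Delta w_k\|=O(d(w_k,w^*))$, so by shrinking $\delta$ we may assume $\Pi\|\Delta w_k\|\le\hat\gamma\le\gamma_k$ whenever $w_k\in B_\delta(w^*)$ (taking $\gamma_k<1$, which is harmless in both cases). Then Lemma~\ref{lem:gamma_alpha} yields $0\le 1-\alpha_k\le(1-\gamma_k)+\Pi\|\Delta w_k\|$.

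Finally I would re-run the analysis of Proposition~\ref{prop:PD_Newton} on $\mathcal{N}$. Its central recursion (\ref{77}), namely $d(w_{k+1},w^*)\le\kappa_1(1-\alpha_k)d(w_k,w^*)+\kappa_2 d^2(w_k,w^*)+\kappa_3\mu_k$, holds verbatim here; substituting the bound on $1-\alpha_k$ and $\Pi\|\Delta w_k\|=O(d(w_k,w^*))$ it becomes
\begin{equation*}
 d(w_{k+1},w^*)\le\kappa_1(1-\gamma_k)\,d(w_k,w^*)+O\!\left(d^2(w_k,w^*)\right)+\kappa_3\mu_k ,
\end{equation*}
so the extra step-size term is absorbed into the quadratic term. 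From here the two cases follow exactly as in Proposition~\ref{prop:PD_Newton}: in case (1), $\gamma_k\to1$ and $\mu_k=o(\|F(w_k)\|)=o(d(w_k,w^*))$ give, after dividing by $d(w_k,w^*)$, a ratio tending to $0$, and the usual induction on $\delta$ shows $\{w_k\}$ is well defined and converges superlinearly; in case (2), $1-\gamma_k=O(\|F(w_k)\|)=O(d(w_k,w^*))$ and $\mu_k=O(\|F(w_k)\|^2)=O(d^2(w_k,w^*))$ reduce the recursion to $d(w_{k+1},w^*)=O(d^2(w_k,w^*))$, giving quadratic convergence.

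I expect the main obstacle to be not conceptual but a matter of careful bookkeeping: a single radius $\delta$ must be chosen small enough to simultaneously accommodate all the neighborhood-dependent constants (the bound $\Xi$ from Lemma~\ref{lem:nonsingular}, the constants in Lemmas~\ref{lem:R1est} and \ref{lem:dist_zero}, and those in Lemma~\ref{lem:distR1}), to guarantee the inequality $\Pi\|\Delta w_k\|\le\gamma_k$ required by Lemma~\ref{lem:gamma_alpha}, and to make the contraction in the displayed recursion strict, while the induction that keeps $w_k\in B_\delta(w^*)$, the positivity $(z_k,s_k)>0$ from (\ref{rule:StepSizeA}), and the convergence estimate are all carried along together; verifying that (\ref{77}) and the auxiliary lemmas transfer unchanged from $\mathcal{M}$ to the product manifold $\mathcal{N}$ with its product metric and retraction $\bar{\mathrm{R}}$ is routine but should be stated.
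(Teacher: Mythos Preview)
Your proposal is correct and follows essentially the same route as the paper: verify \ref{A1}--\ref{A3} via Theorem~\ref{prop:non}, bound $\|\Delta w_k\|=O(d(w_k,w^*))$ through Lemmas~\ref{lem:nonsingular} and \ref{lem:dist_zero}, feed this into Lemma~\ref{lem:gamma_alpha} to convert the $\gamma_k$ hypotheses into the $\alpha_k$ hypotheses of Proposition~\ref{prop:PD_Newton}, and then read off the recursion (\ref{77}). The paper's proof is terser (it writes out only case~(2)) but otherwise identical in structure and in the lemmas it invokes.
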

\begin{proof}
We only prove (2) because (1) can be proven in the same way.
Let $w_{k} $ be such that  $d(w_{k} , w^{*})<\delta$ for sufficiently small $\delta$.
From Proposition \ref{prop:non}, \ref{B1}-\ref{B5} shows that the KKT vector field $F$ satisfies \ref{A1}-\ref{A3}; thus, the discussion in the proof of Proposition \ref{prop:PD_Newton} applies to KKT vector field $F$ as well, simply by replacing the symbol $p$ with $w$.
Since we choose $\mu_{k}=O(\|F(w_{k})\|^{2})$, and $\|F(w_{k})\|=O(d(w_{k}, w^{*}))$ by (\ref{eq:LipFbigoh}), we obtain
$
\mu_{k}=O(d^{2}(w_{k}, w^{*})).
$
Thus,
\begin{align*}
	 \left\|\Delta w_{k}\right\|  & =
	\left\|
	\nabla F(w_{k})^{-1} (-F(w_{k})+\mu_{k} \hat{e})
	\right\| \text{ (by (\ref{eq:prototype}))}\\
	& \leq 
	\Xi
	(\left\|F(w_{k})\right\|+\mu_{k}\|\hat{e}\|) \text{ (by Lemma \ref{lem:nonsingular})}\\
	& \leq  O(\|F(w_k)\|)+O(\mu_{k}) \\
	& =O(d(w_{k} , w^{*}))+O(d^{2}(w_{k} , w^{*}) )
	=O(d(w_{k} , w^{*})).
\end{align*}
Since $\delta$ is sufficiently small, from the above inequalities, the condition of Lemma \ref{lem:gamma_alpha} is satisfied, i.e.,
$
\left\|\Delta w_{k}\right\| \leq \hat{\gamma} / \Pi \leq \gamma_k / \Pi
$
for a constant $\hat{\gamma} \in (0,1).$
Hence, by $1-\gamma_k=O(\left\|F\left(w_k\right)\right\|)$, one has
$
1-\alpha_{k} \leq
(1-\gamma_{k})+\Pi\left\|\Delta w_{k}\right\| 
= (1-\gamma_{k})+O(d(w_{k} , w^{*}))
= O(d(w_{k} , w^{*})).
$
Finally, from (\ref{eq:bigoh}),
we have
$
d(w_{k+1} , w^{*}) = (1-\alpha_{k}) O(d(w_{k} , w^{*}) )
+O(d^{2}(w_{k} , w^{*}) )
+O(\mu_{k}) 
=O(d^{2}(w_{k} , w^{*})).
$
This completes the proof.
\end{proof}

Algorithm \ref{algo:prototype_algo} guaranteed the local convergence, but we are more interested in its globally convergent version (described in the next section). We still provide a simple example of Algorithm \ref{algo:prototype_algo} online
\footnote{See .../LocalRIPM/PrototypeRIPM.m in \url{https://doi.org/10.5281/zenodo.10612799}}, where from (2) of Theorem \ref{thm:LocalConvergence}, in practice we set parameters 
$\mu_{k+1}=\min\{\mu_{k}/1.5,0.5\left\|F(w_k)\right\|^2\};
$ and 
$\gamma_{k+1}=\max\{\hat{\gamma}, 1- \left\|F(w_k)\right\|\}
$
with $\hat{\gamma}=0.5$.

\section{Global Algorithm}\label{sec:global}

The globally convergent version of our RIPM uses the classical line search described in \cite{el1996formulation}. 
The following considerations and definitions are needed in order to describe it compactly.
For simplicity, we often omit the subscript of iteration count $k$.

Given the current point $w=(x, y, z, s)$ and $\Delta w=(\Delta x, \Delta y, \Delta z, \Delta s)$, the next iterate is obtained along a curve on product manifold $\mathcal{N}$, i.e.,
$
\alpha \mapsto w(\alpha):=\bar{\mathrm{R}}_{w}(\alpha \Delta w)
$
with some step size $\alpha >0$, see (\ref{eq:barR}) for $\bar{\mathrm{R}}_{w}$.
By introducing
$w(\alpha)=(x(\alpha), y(\alpha), z(\alpha), s(\alpha))$, 
we have $x(\alpha)=\mathrm{R}_{x}(\alpha \Delta x)$, $y(\alpha)=y+\alpha  \Delta y$, $z(\alpha)=z+\alpha  \Delta z$, and $s(\alpha)=s+\alpha  \Delta s$.
For a given starting point $w_{0} \in \mathcal{N}$ with $(z_{0}, s_{0})>0$, let us set two constants
\begin{equation*}
	\tau_{1}:=\min (Z_{0} S_{0} e)/ (z_{0}^{T} s_{0} / m), 
	\; 
	\tau_{2}:=z_{0}^{T} s_{0}/\left\|F(w_{0})\right\|.
\end{equation*}
As well, define two functions
$
f^{I}(\alpha):=\min (Z(\alpha) S(\alpha)e)-\gamma \tau_{1} z(\alpha)^{T} s(\alpha) / m$,
$
f^{II}(\alpha):=z(\alpha)^{T} s(\alpha)-\gamma \tau_{2}\|F(w(\alpha))\|,
$
where $\gamma \in (0,1)$ is a constant.
For $i=I, I I$, define
\begin{equation}\label{step}
	\alpha^{i}:=\max _{\alpha \in (0,1]}\left\{\alpha: f^{i}(t) \geq 0, \text { for all } t \in (0, \alpha] \right\},
\end{equation}
i.e., $\alpha^i$ are either one or the smallest positive root for the functions $f^i(\alpha)$ in $(0,1]$.

Moreover, we define the merit function $\varphi\colon \mathcal{N} \to \mathbb{R}$ by $\varphi(w):=\|F(w)\|^{2}$; accordingly, we have $\operatorname{grad} \varphi(w)=2\nabla F(w)^{*} [F(w)],$ where symbol $*$ means its adjoint operator.
Let $\| \cdot \|_{1}$, $\| \cdot \|_{2}$ be $l_{1}, l_{2}$ vector norms.
Note that
$\|F(w)\|_{w}^{2}=\left\|\operatorname{grad}_{x} \mathcal{L}(w)\right\|_{x}^{2}+\|h(x)\|_{2}^{2}+\|g(x)+s\|_{2}^{2}+\|ZSe\|_{2}^{2}$ by (\ref{eq:KKTVectorField}).
Moreover, for any nonnegative $z,s \in \mathbb{R}^{m}$, one has
$
\left\|Z Se\right\|_{2} \leq z^{T} s = \|Z Se\|_{1}\leq \sqrt{m} \left\|Z Se\right\|_{2}.
$
Hence,
\begin{equation}\label{genlma2}
	\left\|Z Se\right\|_{2} / \sqrt{m} 
	\leq z^{T} s / \sqrt{m}
	\leq \left\|Z Se\right\|_{2} 
	\leq \|F(w)\|.
\end{equation}
Now, using the above definitions, the globally convergent RIPM can be stated as Algorithm \ref{algo:GlobalRIPM}.

\begin{algorithm}%[H]
	\caption{Global Convergent Algorithm of RIPM for (\ref{RCOP})}
	\label{algo:GlobalRIPM}
	\SetAlgoLined
	
	\KwIn{An initial point $w_0=\left(x_0, y_0, z_0, s_0\right) \in \mathcal{N}$ with $\left(z_0, s_0\right)>0$ and a retraction $\mathrm{R}$ on $\mathcal{M}$. $ \theta \in(0,1), \beta \in(0,0.5], \gamma_{-1} \in (0.5,1).$}
	
	\KwOut{Sequence $\left\{w_k\right\} \subset \mathcal{N}$ such that $\left\{w_k\right\} \rightarrow w^*$ and $w^*$ satisfies the KKT conditions (\ref{eq:KKTConditions}).}
	
	Set $k \to 0$\;
	
	\While{Stopping criterion not satisfied}{
		1. Set $\sigma_{k} \in(0,1), \rho_{k} \in [z_{k}^{T} s_{k} / m, \left\|F(w_{k})\right\| / \sqrt{m} ]$\;
		2. Obtain $\Delta w_k=\left(\Delta x_k, \Delta y_k, \Delta z_k, \Delta s_k\right) \in T_{w_k} \mathcal{N}$ by solving the following linear equation:
		\begin{equation}\label{eq:NT}
			\nabla F(w_{k}) [\Delta w_{k}]=-F(w_{k})+\sigma_{k} \rho_{k} \hat{e};
		\end{equation}
		
		3. Step size selection:
		
		(3a) {Centrality condition}:
		Set $\gamma_{k} \in (0.5, \gamma_{k-1})$ and $\bar{\alpha}_{k}=\min \{\alpha_{k}^{I}, \alpha_{k}^{II}\}$ from (\ref{step})\;

			(3b) {Sufficient decrease condition}:
		Let $\alpha_{k}:=\bar{\alpha}_{k}$\;	
		\While{$\alpha_{k}$ does not satisfy the condition $\varphi(\bar{\mathrm{R}}_{w_{k}}(\alpha_{k} \Delta w_{k}))-\varphi(w_{k}) \leq \alpha_{k}  \beta \left\langle\operatorname{grad} \varphi_{k}, \Delta w_{k}\right\rangle$}{
			$\alpha_{k} := \theta \alpha_{k}$\;
		}
				
		4. Compute the next point as $w_{k+1}:=\bar{\mathrm{R}}_{w_{k}}(\alpha_{k} \Delta w_{k})$\;
		5. $k \to k+1$\;
	}
\end{algorithm}

Compared to the prototype Algorithm \ref{algo:prototype_algo}, the global Algorithm \ref{algo:GlobalRIPM} involves a more elaborate choice of step size.
Regarding the centrality condition (3a), it does not differ in any way from the Euclidean setting; thus, references \cite{el1996formulation,bonettini2005inexact,durazzi2000newton} show that $\bar{\alpha}_k$ is well-defined thereby ensuring that $z_k$, $s_k$ are positive.
In the following, we focus on the sufficient decrease (Armijo) condition (3b):
\begin{equation}\label{eq:ArmijoCondition}
	\varphi(\bar{\mathrm{R}}_{w_{k}}(\alpha_{k} \Delta w_{k}))-\varphi(w_{k}) \leq \alpha_{k}  \beta \left\langle\operatorname{grad} \varphi_{k}, \Delta w_{k}\right\rangle,
\end{equation}
where $\operatorname{grad} \varphi_k \equiv \operatorname{grad} \varphi\left(w_k\right)$ for short.
With a slight abuse of notation $\varphi$, at the current point $w$ and direction $\Delta w$, we define a real-to-real function $\alpha \mapsto \varphi(\alpha):=\varphi\left(\bar{\mathrm{R}}_w(\alpha \Delta w)\right)$; then, it follows from the definition of a retraction and the chain rule that
\begin{equation*}
\varphi^{\prime}(0) =\mathcal{D} \varphi(\bar{\mathrm{R}}_{w}(0))\left[\mathcal{D}\bar{\mathrm{R}}_{w}(0)[\Delta w]\right] =\mathcal{D} \varphi(w)[\Delta w] =\langle\operatorname{grad} \varphi(w), \Delta w\rangle. 
\end{equation*}
Hence, 
$\varphi_{k}^{\prime}(0)=\langle \operatorname{grad} \varphi_{k}, \Delta w_{k}\rangle$
at the $ k $-th iterate $w_{k}$;
and then the Armijo condition (\ref{eq:ArmijoCondition}) reads $ \varphi_{k}(\alpha_{k}) - \varphi_{k}(0) \leq \alpha_{k} \beta \varphi_{k}^{\prime}(0)$ as usual.
If $ \varphi_{k}^{\prime}(0)<0 $, the backtracking loop in (3b) of Algorithm \ref{algo:GlobalRIPM} will eventually stop \cite[Lemma 3.1]{nocedal2006numerical}. 
The next lemma shows the condition under which the
Newton direction $\Delta w_{k}$ generated by (\ref{eq:NT}) ensures the descent of the merit function.
\begin{lemma}\label{lem:dr}
	If the direction $\Delta w_{k}$ is the solution of equation (\ref{eq:NT}), then
	\begin{equation*}
		\langle\operatorname{grad} \varphi(w_{k}), \Delta w_{k}\rangle = 2(-\|F(w_{k})\|^{2}+\sigma_{k} \rho_{k} z_{k}^{T} s_{k}).
	\end{equation*}
	In this case, $\Delta w_{k}$ is a descent direction for $\varphi(w)$ at $w_{k}$ if and only if 
	$
	\rho_{k}<\|F(w_{k})\|^{2} / \sigma_{k} z_{k}^{T} s_{k}.
	$
\end{lemma}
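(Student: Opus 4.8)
The plan is to compute $\langle\operatorname{grad}\varphi(w_k),\Delta w_k\rangle$ directly by exploiting the identity $\operatorname{grad}\varphi(w)=2\nabla F(w)^{*}[F(w)]$ stated just before Algorithm~\ref{algo:GlobalRIPM}. First I would write
\[
\langle\operatorname{grad}\varphi(w_k),\Delta w_k\rangle
=\langle 2\nabla F(w_k)^{*}[F(w_k)],\Delta w_k\rangle
=2\langle F(w_k),\nabla F(w_k)[\Delta w_k]\rangle,
\]
using the defining property of the adjoint operator on the product Hilbert space $T_{w_k}\mathcal{N}$. Then I would substitute the Newton equation (\ref{eq:NT}), namely $\nabla F(w_k)[\Delta w_k]=-F(w_k)+\sigma_k\rho_k\hat{e}$, to get
\[
\langle\operatorname{grad}\varphi(w_k),\Delta w_k\rangle
=2\langle F(w_k),-F(w_k)+\sigma_k\rho_k\hat{e}\rangle
=2\bigl(-\|F(w_k)\|^{2}+\sigma_k\rho_k\langle F(w_k),\hat{e}\rangle\bigr).
\]

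The only remaining point is to identify $\langle F(w_k),\hat{e}\rangle$. Recalling $\hat{e}=(0_x,0,0,e)$ from (\ref{eq:perturbedKKTvectorfield}) and that the inner product on $T_{w_k}\mathcal{N}$ splits as the $T_x\mathcal{M}$-inner product plus the Euclidean ones on the $\mathbb{R}^l,\mathbb{R}^m,\mathbb{R}^m$ factors, only the last block survives: $\langle F(w_k),\hat{e}\rangle = (Z_kS_ke)^{T}e = z_k^{T}s_k$. Plugging this in yields exactly
\[
\langle\operatorname{grad}\varphi(w_k),\Delta w_k\rangle = 2\bigl(-\|F(w_k)\|^{2}+\sigma_k\rho_k z_k^{T}s_k\bigr),
\]
which is the claimed formula.

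For the second assertion, $\Delta w_k$ is a descent direction for $\varphi$ at $w_k$ precisely when $\langle\operatorname{grad}\varphi(w_k),\Delta w_k\rangle<0$, i.e.\ when $-\|F(w_k)\|^{2}+\sigma_k\rho_k z_k^{T}s_k<0$. Since $w_k$ is not a KKT point we have $\|F(w_k)\|\neq 0$, and $\sigma_k z_k^{T}s_k>0$ because $\sigma_k\in(0,1)$ and $(z_k,s_k)>0$ is maintained by the algorithm; hence we may divide through to obtain the equivalent condition $\rho_k<\|F(w_k)\|^{2}/(\sigma_k z_k^{T}s_k)$, as stated.

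I do not expect any real obstacle here: the proof is a short chain of substitutions. The one place to be slightly careful is the adjoint/inner-product bookkeeping on the product manifold $\mathcal{N}$ — making sure that $\nabla F(w_k)^{*}$ is taken with respect to the block inner product $\langle(\xi_x,\xi_y,\xi_z,\xi_s),(\eta_x,\eta_y,\eta_z,\eta_s)\rangle=\langle\xi_x,\eta_x\rangle_x+\xi_y^{T}\eta_y+\xi_z^{T}\eta_z+\xi_s^{T}\eta_s$, so that the contraction $\langle F(w_k),\hat e\rangle$ collapses cleanly to $z_k^{T}s_k$. Everything else is immediate.
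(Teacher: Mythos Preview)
Your proposal is correct and follows essentially the same approach as the paper: use $\operatorname{grad}\varphi(w)=2\nabla F(w)^{*}[F(w)]$, move the adjoint across, substitute (\ref{eq:NT}), and evaluate $\langle F(w),\hat e\rangle=z^{T}s$ via the block structure of $\hat e$. Your treatment of the ``if and only if'' clause is slightly more explicit than the paper's (which simply stops after the inner-product computation), but the argument is the same.
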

\begin{proof}
The iteration count $k$ is omitted.
Let $\Delta w$ be given by (\ref{eq:NT}).
Then, we have
$
\langle\operatorname{grad} \varphi(w), \Delta w\rangle =\left\langle 2 \nabla F(w)^{*} [F(w)], \Delta w\right\rangle 
=2\langle F(w), \nabla F(w) [\Delta w] \rangle
=2\langle F(w), -F(w)+\sigma \rho \hat{e}\rangle
=2(-\langle F(w), F(w)\rangle+\sigma \rho\langle F(w), \hat{e}\rangle).
$
Then, by definition of $\hat{e}$ in (\ref{eq:perturbedKKTvectorfield}),
$ \langle F(w), \hat{e}\rangle 
=\langle ZSe, e\rangle
= z^{T} s$ completes the proof.
\end{proof}

The next proposition shows that Algorithm \ref{algo:GlobalRIPM} can generate the monotonically nonincreasing sequence $\left\{\varphi_{k}\right\}$. 
Note that $\varphi_{k+1} \equiv \varphi_k\left(\alpha_k\right)$ and $\varphi_k \equiv \varphi_k(0) \equiv \varphi\left(w_k\right)$.
\begin{proposition}\label{prop:monotone}
	If $\|F(w_{k})\| \ne 0$, then
	the direction $\Delta w_{k}$ generated by Algorithm \ref{algo:GlobalRIPM} is a descent direction for $\varphi(w)$ at $w_k$. 
	Moreover, if the Armijo condition (\ref{eq:ArmijoCondition}) is satisfied, then
	\begin{equation*}
		\varphi_{k}(\alpha_{k}) \leq\left[1-2 \alpha_{k} \beta(1-\sigma_{k})\right] \varphi_{k}(0) .
	\end{equation*}
	Thus, the sequence $\left\{\varphi_{k}\right\}$ is monotonically nonincreasing.
\end{proposition}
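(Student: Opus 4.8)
The plan is to derive all three assertions from Lemma~\ref{lem:dr} and the two-sided estimate (\ref{genlma2}); once these are in place only elementary algebra remains, so I do not anticipate a serious obstacle.

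First I would show that $\Delta w_k$ is a descent direction for $\varphi$ at $w_k$. By Lemma~\ref{lem:dr}, $\langle\operatorname{grad}\varphi(w_k),\Delta w_k\rangle=2\bigl(-\|F(w_k)\|^{2}+\sigma_k\rho_k z_k^{T}s_k\bigr)$, and the direction is descent exactly when $\sigma_k\rho_k z_k^{T}s_k<\|F(w_k)\|^{2}$. To verify this I would combine $\sigma_k<1$ and $\rho_k\le\|F(w_k)\|/\sqrt m$ (both from Step~1 of Algorithm~\ref{algo:GlobalRIPM}) with $z_k^{T}s_k\le\sqrt m\,\|F(w_k)\|$, which is the right-hand half of (\ref{genlma2}); multiplying gives $\sigma_k\rho_k z_k^{T}s_k<\rho_k z_k^{T}s_k\le\|F(w_k)\|^{2}$, where strictness in the first step uses $\rho_k>0$ and $z_k^{T}s_k>0$ (positivity of $(z_k,s_k)$ being preserved by the centrality condition (3a), as noted after Algorithm~\ref{algo:GlobalRIPM}), and the whole chain is nontrivial since $\|F(w_k)\|\neq0$. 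This also makes the backtracking loop (3b) well posed, so the remaining reasoning applies.

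Next I would establish the quantitative bound. Granting the Armijo inequality (\ref{eq:ArmijoCondition}) at $\alpha_k$, I substitute $\varphi_k'(0)=\langle\operatorname{grad}\varphi_k,\Delta w_k\rangle$ and the formula from Lemma~\ref{lem:dr}, together with $\varphi_k(0)=\varphi(w_k)=\|F(w_k)\|^{2}$, to obtain
\[
\varphi_k(\alpha_k)\le\|F(w_k)\|^{2}+2\alpha_k\beta\bigl(-\|F(w_k)\|^{2}+\sigma_k\rho_k z_k^{T}s_k\bigr).
\]
A routine rearrangement rewrites the right-hand side as $\bigl[1-2\alpha_k\beta(1-\sigma_k)\bigr]\|F(w_k)\|^{2}+2\alpha_k\beta\sigma_k\bigl(\rho_k z_k^{T}s_k-\|F(w_k)\|^{2}\bigr)$, and the last term is nonpositive by the non-strict form of $\rho_k z_k^{T}s_k\le\|F(w_k)\|^{2}$ shown above; this yields the displayed estimate. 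Finally, for monotonicity I would observe that $\alpha_k\in(0,1]$, $\beta\in(0,0.5]$ and $\sigma_k\in(0,1)$ force $1-2\alpha_k\beta(1-\sigma_k)\in(0,1)$, so $\varphi_{k+1}=\varphi_k(\alpha_k)\le\varphi_k(0)=\varphi_k$, i.e., $\{\varphi_k\}$ is nonincreasing (strictly so whenever $\|F(w_k)\|\neq0$). The only matters needing attention are keeping track of strict versus non-strict inequalities and invoking positivity of $(z_k,s_k)$ so that $z_k^{T}s_k>0$ and (\ref{genlma2}) is applicable; neither constitutes a real difficulty.
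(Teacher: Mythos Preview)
Your proposal is correct and follows essentially the same approach as the paper: both rely on Lemma~\ref{lem:dr} for the expression of $\langle\operatorname{grad}\varphi_k,\Delta w_k\rangle$ and on the bound $z_k^Ts_k\le\sqrt{m}\,\|F(w_k)\|$ from (\ref{genlma2}), together with the algorithmic constraint $\rho_k\le\|F(w_k)\|/\sqrt{m}$. The only cosmetic difference is in the second part: the paper first records the intermediate bound $\varphi_k'(0)\le-2(1-\sigma_k)\varphi_k(0)$ and then substitutes it into the Armijo inequality, whereas you expand the Armijo right-hand side and split off a nonpositive remainder $2\alpha_k\beta\sigma_k(\rho_kz_k^Ts_k-\|F(w_k)\|^2)$; these are the same computation rearranged.
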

\begin{proof} 
	The iteration count $k$ is omitted. Suppose that $\rho \leq \left\|F(w)\right\| / \sqrt{m}$ and $ \Delta w$ is given by (\ref{eq:NT}), we have
	\begin{align}
		\varphi^{\prime}(0)& =\langle \operatorname{grad} \varphi(w), \Delta w\rangle
		=2(-\varphi(w)+\sigma \rho  z^{T} s) \text{ (by Lemma \ref{lem:dr})} \notag \\
		&\leq 2(-\varphi(w)+\sigma \left\|F(w)\right\|  z^{T} s / \sqrt{m}) \notag \\
		& \leq 2(-\varphi(w)+\sigma  \left\|F(w)\right\|^{2})  \text{ (by (\ref{genlma2}))} \notag \\
		& =-2(1-\sigma) \varphi(w)<0.
		\label{49}
	\end{align}
	Thus, in Algorithm \ref{algo:GlobalRIPM}, $\Delta w$ is a descent direction for the merit function $\varphi$ at $w$.
	Alternatively, by Lemma \ref{lem:dr}, it is sufficient to show that
	$
	\|F(w)\| / \sqrt{m} < \|F(w)\|^{2} /\sigma z^{T} s.
	$
	By $\sigma z^{T} s < z^{T} s \leq \sqrt{m}\|F(w)\|;$
	then,
	$
	1 / \sqrt{m} < \|F(w)\|/\sigma z^{T} s.
	$
	Multiplying both sides by $\|F(w)\|$ yields the result.

	Moreover, if condition (\ref{eq:ArmijoCondition}) is satisfied, then by (\ref{49}), we have
	$
	\varphi(\alpha)  \leq \varphi(0)+\alpha \beta \langle\operatorname{grad} \varphi(w), \Delta w\rangle 
	\leq \varphi(0)+\alpha \beta (-2(1-\sigma) \varphi(0))
	=[1-2 \alpha \beta(1-\sigma)] \varphi(0).
	$
	Note that in Algorithm \ref{algo:GlobalRIPM}, we set $\beta \in(0, 1 / 2], \sigma \in(0,1)$, and $\alpha \in (0,1]$, which imply that the sequence $\left\{\varphi_k\right\}$ is monotonically nonincreasing.
\end{proof}

Finally, we need to make the following assumptions. For $\epsilon \geq 0$, let
$$
\Omega(\epsilon) :=\left\{w\in \mathcal{N} \mid \epsilon \leq \varphi(w) \leq \varphi_{0}, \min (Z S e) /(z^{T} s / m) \geq \tau_{1} / 2, z^{T} s /\|F(w)\| \geq \tau_{2} / 2\right\}.
$$

\setlist[assumptions,1]{label=\textbf{(C\arabic*)}}
\begin{assumptions}[leftmargin=9mm]
	\item \label{C1} 
	In the set $\Omega(0)$, $f, h,$ and $g$ are smooth functions;
	$\left\{\operatorname{grad} h_{i}(x)\right\}_{i=1}^{l}$ is linearly independent for all $x$;	
	and the map $w \mapsto \nabla F (w)$ is Lipschitz continuous (with respect to parallel transport);
	
	\item \label{C2}
	The sequences $\left\{x_{k}\right\}$ and $\left\{z_{k}\right\}$ are bounded \cite{Durazzi2004,bonettini2005inexact};
	
	\item \label{C3}
	In any compact subset of $\Omega(0)$, $\nabla F (w)$ is nonsingular.
\end{assumptions}
Given the above assumptions, we can now prove the following statement.
\begin{theorem}[Global Convergence of RIPM]\label{thm:global}
	Let $\left\{w_{k}\right\}$ be generated by Algorithm \ref{algo:GlobalRIPM} with $\mathrm{R} = \operatorname{Exp}$ and $\left\{\sigma_{k}\right\} \subset(0,1)$ be bounded away from zero and one.
	Let $\varphi$ be Lipschitz continuous on $\Omega(0)$.
	If \ref{C1}-\ref{C3} hold, then $\left\{\left\|F(w_{k})\right\|\right\}$ converges to zero. Moreover, if $w^{*}$ is a limit point of sequence $\left\{w_{k}\right\}$, then $w^{*}$ satisfies Riemannian KKT conditions (\ref{eq:KKTConditions}).
\end{theorem}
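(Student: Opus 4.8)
The plan is to mirror the Euclidean analysis of El-Bakry et al.~\cite{el1996formulation}, isolating the single place where the manifold enters. Two facts are already available. By Proposition~\ref{prop:monotone} the sequence $\varphi_k:=\varphi(w_k)=\|F(w_k)\|^2$ is monotonically nonincreasing, hence converges to some $\varphi^*\ge 0$. And every iterate stays in $\Omega(0)$: indeed $\varphi_k\le\varphi_0$, while the centrality step~(3a) at the previous iteration, with $\gamma_{k-1}>1/2$, gives $\min(Z_kS_ke)\ge\gamma_{k-1}\tau_1 z_k^Ts_k/m\ge(\tau_1/2)z_k^Ts_k/m$ and $z_k^Ts_k\ge\gamma_{k-1}\tau_2\|F(w_k)\|\ge(\tau_2/2)\|F(w_k)\|$, so the two quotients defining $\Omega$ never drop below $\tau_1/2$ and $\tau_2/2$; moreover $(z_k,s_k)>0$ throughout, since $f^I(\alpha)\ge0$ on $(0,\bar{\alpha}_k]$ keeps each $(z_k)_i(s_k)_i$ from crossing $0$ along $\alpha\mapsto w_k(\alpha)$. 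If $\varphi^*=0$ the first claim holds, so assume for contradiction $\varphi^*>0$.

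Next I would set up compactness and the uniform bounds, which is where~\ref{C1}--\ref{C3} are used. By~\ref{C2} the $x_k$ and $z_k$ are bounded, so (Hopf--Rinow, $\mathcal M$ complete) the $x_k$ lie in a compact $K_x\subset\mathcal M$; from $\|F(w_k)\|^2\le\varphi_0$ one reads off that $\|g(x_k)+s_k\|$ and $\|\operatorname{grad}_x\mathcal L(w_k)\|$ are bounded, hence (continuity of $g$, $\operatorname{grad} f$, $\mathcal G_x$ on $K_x$) so are $s_k$ and $\mathcal H_{x_k}[y_k]$, and the linear independence of $\{\operatorname{grad} h_i(x)\}$ from~\ref{C1} makes $\mathcal H_x^*\mathcal H_x$ uniformly positive definite on $K_x$, which bounds $y_k$. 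Thus $\{w_k\}$ is precompact and $\bar K:=\overline{\{w_k\}}$ is a compact subset of $\Omega(0)$ (the defining inequalities are closed and, since $z_k^Ts_k\ge(\tau_2/2)\sqrt{\varphi^*}>0$, do not degenerate in the limit). On $\bar K$, assumption~\ref{C3} gives $\nabla F$ nonsingular, hence by continuity $\|\nabla F(w)^{-1}\|\le\Xi$ there; since $\sigma_k$ and $\rho_k\in[z_k^Ts_k/m,\|F(w_k)\|/\sqrt m]$ are bounded, solving~(\ref{eq:NT}) yields a uniform bound $\|\Delta w_k\|\le\Lambda$.

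The core argument then runs as follows. From Proposition~\ref{prop:monotone}, $\varphi_k-\varphi_{k+1}\ge 2\alpha_k\beta(1-\sigma_k)\varphi_k$; telescoping gives $\sum_k\alpha_k(1-\sigma_k)\varphi_k<\infty$, and since $1-\sigma_k$ is bounded away from $0$ and $\varphi_k\ge\varphi^*>0$, necessarily $\alpha_k\to 0$. To contradict this I bound $\alpha_k$ below uniformly. First, $\bar{\alpha}_k=\min\{\alpha_k^I,\alpha_k^{II}\}$ is bounded below by a positive constant: this is precisely the Euclidean computation of~\cite{el1996formulation,durazzi2000newton,bonettini2005inexact} applied to the scalars $z_k,s_k,\|F(w_k)\|$, with inputs $\|\Delta w_k\|\le\Lambda$ and $z_k^Ts_k\ge(\tau_2/2)\sqrt{\varphi^*}>0$. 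Hence if $\alpha_k<\bar{\alpha}_k$ then backtracking rejected $\bar t_k:=\alpha_k/\theta\le\bar{\alpha}_k$, i.e.\ $\varphi_k(\bar t_k)-\varphi_k(0)>\bar t_k\beta\varphi_k'(0)$ with $\varphi_k'(0)=\langle\operatorname{grad}\varphi(w_k),\Delta w_k\rangle$. Here the choice $\mathrm R=\operatorname{Exp}$ enters: $\alpha\mapsto\operatorname{Exp}_{w_k}(\alpha\Delta w_k)$ is a geodesic of constant speed $\|\Delta w_k\|\le\Lambda$ (Lemma~\ref{lem:gauss}), so the standard second-order estimate for the smooth function $\varphi$ on a fixed compact neighborhood of $\bar K$ produces a single constant $L$ with $\varphi_k(\alpha)\le\varphi_k(0)+\alpha\varphi_k'(0)+\tfrac L2\Lambda^2\alpha^2$ for all $k$. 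Substituting and using $\varphi_k'(0)\le-2(1-\sigma_k)\varphi_k\le-2(1-\sigma_k)\varphi^*$ (from~(\ref{49}), bounded away from $0$) yields $\bar t_k\ge 4(1-\beta)(1-\sigma_k)\varphi^*/(L\Lambda^2)$, so $\alpha_k=\theta\bar t_k$ is bounded below by a positive constant; combined with the case $\alpha_k=\bar{\alpha}_k$, this contradicts $\alpha_k\to 0$. Therefore $\varphi^*=0$, i.e.\ $\|F(w_k)\|\to 0$. Finally, if $w^*$ is a limit point of $\{w_k\}$, continuity of $F$ gives $F(w^*)=0$, $(z_k,s_k)>0$ passes to $(z^*,s^*)\ge 0$ in the limit, and unpacking the four blocks of $F(w^*)=0$ (with $s^*=-g(x^*)$ from the third block) recovers the Riemannian KKT conditions~(\ref{eq:KKTConditions}).

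I expect the main obstacle to be the \emph{uniformity} of the second-order decrease estimate --- producing one constant $L$ valid along all the retraction curves $\alpha\mapsto\bar{\mathrm{R}}_{w_k}(\alpha\Delta w_k)$ simultaneously --- which forces one to combine the compactness of $\bar K$, the uniform bound $\|\Delta w_k\|\le\Lambda$, and the constant-speed property of geodesics that $\mathrm R=\operatorname{Exp}$ supplies (Lemma~\ref{lem:gauss}); the companion fact that $\bar{\alpha}_k$ stays away from $0$ is genuinely the Euclidean argument and can simply be cited.
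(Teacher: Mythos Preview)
Your proposal is correct and reaches the same conclusion as the paper, but by a somewhat different and arguably cleaner route.

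The paper organizes the contradiction as a two-case split (infinitely many $\alpha_k=\bar\alpha_k$ versus infinitely many $\alpha_k<\bar\alpha_k$); in the second case it applies the mean value theorem to $\alpha\mapsto\varphi_k(\alpha)$, rewrites $\langle\operatorname{grad}\varphi_k,\Delta w_k\rangle$ via the Gauss lemma (Lemma~\ref{lem:gauss}), invokes the Lipschitz property of $\operatorname{grad}\varphi$ with respect to the differentiated-retraction vector transport (Definition~\ref{defn:LipschitzContinuouslyDifferentiable}), and concludes through an $F$-function argument \`a la Ortega--Rheinboldt. You instead telescope Proposition~\ref{prop:monotone} to force $\alpha_k\to 0$ and then derive a uniform positive lower bound on $\alpha_k$ in either case; for the Armijo branch you use the direct quadratic upper bound $\varphi_k(\alpha)\le\varphi_k(0)+\alpha\varphi_k'(0)+\tfrac{L}{2}\Lambda^2\alpha^2$, which follows from $\operatorname{Hess}\varphi$ being bounded on a compact geodesic neighborhood together with $\nabla_{\dot\gamma}\dot\gamma=0$ and $\|\dot\gamma\|\equiv\|\Delta w_k\|$ for the exponential map. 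This replaces the paper's MVT/Gauss-lemma manipulation and the $F$-function machinery by a single Taylor step, at the cost of implicitly using $C^2$-smoothness of $\varphi$ (which \ref{C1} supplies) rather than the stated Lipschitz hypothesis.

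One imprecision worth flagging: you describe the lower bound on $\bar\alpha_k$ as ``genuinely the Euclidean argument''. That is true for $\alpha_k^{I}$, but for $\alpha_k^{II}$ one must control $\|F(w(\alpha))\|$ along the retraction curve, and this is exactly where the paper's Lemma~\ref{lem:Step length} does a Riemannian adaptation (via Lemma~\ref{lem:FundamentalTheoremOfCalculusRiemannian} with $c_1=0$ for $\operatorname{Exp}$, and the Lipschitz continuity of $\nabla F$ from \ref{C1}). You should cite Lemma~\ref{lem:Step length} rather than the Euclidean references directly at that point.
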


The proof of the above theorem will be given in the next section.
Note that although the exponential map is used in the proof, the numerical experiments indicate that global convergence may hold for a general retraction $\mathrm{R}$.

\section{Proof of Global Convergence}\label{sec:ProofGlobal}

In this section, our goal is to prove the global convergence Theorem \ref{thm:global}.
We will follow the proof procedure in \cite{el1996formulation}, which discussed Algorithm \ref{algo:GlobalRIPM} when $\mathcal{M}=\mathbb{R}^{n}$ in (\ref{RCOP}), and we call the algorithm in \cite{el1996formulation} the Euclidean Interior Point Method (EIPM).
In what follows, we will omit similar content because of space limitations and focus on the difficulties encountered when adapting the proof of EIPM to RIPM. 
In particular, we will make these difficulties as tractable as in EIPM by proving a series of propositions in Subsection \ref{subsec:Continuity}.

\subsection{Continuity of Some Special Scalar Fields}\label{subsec:Continuity}

To show the boundedness of the sequences generated by Algorithm \ref{algo:GlobalRIPM}, we need the continuity of some special scalar fields on manifold $\mathcal{M}$. 
The claims of this subsection are trivial if $\mathcal{M}= \mathbb{R}^n$, but they need to be treated carefully for general $\mathcal{M}$.

If we assign a linear operator $\mathcal{A}_{x}\colon T_{x} \mathcal{M} \to T_{x} \mathcal{M}$ to each $x \in \mathcal{M},$
then the map $ x \mapsto  \left\|\mathcal{A}_{x}\right\|:=\sup \left\{\left\|\mathcal{A}_{x}v\right\|_{x} \mid v \in T_{x} \mathcal{M},\|v\|_{x}=1, \text{ or, } \|v\|_{x} \leq 1\right\}$ is a scalar field on $\mathcal{M}$; but notice that the operator norm $ \| \cdot \| $ depends on $ x $.
Let $\operatorname{Sym}(d)$ denote the set of symmetric matrices of order $d$, and $\| \cdot \|_{\mathrm{F}}$, $\| \cdot \|_{2}$ denote the Frobenius norm and the spectral norm, respectively, applied to a given matrix.

\begin{lemma}\label{lem:InvariantOrthonormalBasis}
	Let $\mathcal{M}$ be an $n$-dimensional Riemannian manifold. 
	Let $x \in \mathcal{M}$ and $\mathcal{A}_{x}$ be a linear operator on $T_{x} \mathcal{M}$.
	Choose an orthonormal basis of $T_{x} \mathcal{M}$ with respect to $\langle\cdot,\cdot\rangle_{x}$, and let $\hat{\mathcal{A}}_{x}\in \mathbb{R}^{n \times n}$ denote the matrix representation of $\mathcal{A}_{x}$ under the basis.
	Then, 
	$\|\hat{\mathcal{A}}_{x}\|_{2}$, $\|\hat{\mathcal{A}}_{x}\|_{\mathrm{F}}$ are \emph{invariant} under a change of orthonormal basis; moreover,
	$
	\left\|\mathcal{A}_{x}\right\|=\|\hat{\mathcal{A}}_{x}\|_{2} \leq \|\hat{\mathcal{A}}_{x}\|_{\mathrm{F}}.
	$
\end{lemma}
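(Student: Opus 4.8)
The plan is to reduce everything to standard linear algebra over $\mathbb{R}^n$ by exploiting the fact that an orthonormal basis of $(T_x\mathcal{M},\langle\cdot,\cdot\rangle_x)$ gives a linear isometry $T_x\mathcal{M}\cong\mathbb{R}^n$ carrying $\langle\cdot,\cdot\rangle_x$ to the Euclidean inner product. First I would fix one orthonormal basis $\{u_i\}_{i=1}^n$ and observe that for $v\in T_x\mathcal{M}$ with coordinate vector $\hat v\in\mathbb{R}^n$ one has $\|v\|_x=\|\hat v\|_2$ and the coordinate vector of $\mathcal{A}_x v$ is $\hat{\mathcal{A}}_x\hat v$; hence $\|\mathcal{A}_x\|=\sup\{\|\hat{\mathcal{A}}_x\hat v\|_2:\|\hat v\|_2=1\}=\|\hat{\mathcal{A}}_x\|_2$ directly from the definition of the operator norm given in the paper.

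For the invariance claim, I would take a second orthonormal basis $\{u_i'\}$ related to the first by $u_j'=\sum_i Q_{ij}u_i$; orthonormality of both bases forces $Q$ to be an orthogonal matrix, and the two matrix representations are related by $\hat{\mathcal{A}}_x' = Q^T\hat{\mathcal{A}}_x Q$. Then $\|\hat{\mathcal{A}}_x'\|_2=\|Q^T\hat{\mathcal{A}}_x Q\|_2=\|\hat{\mathcal{A}}_x\|_2$ since the spectral norm is invariant under left/right multiplication by orthogonal matrices, and likewise $\|\hat{\mathcal{A}}_x'\|_{\mathrm{F}}=\|\hat{\mathcal{A}}_x\|_{\mathrm{F}}$ because the Frobenius norm is $\sqrt{\operatorname{tr}(\hat{\mathcal{A}}_x^T\hat{\mathcal{A}}_x)}$ and trace is invariant under orthogonal conjugation (equivalently, the Frobenius norm of $Q^T\hat{\mathcal{A}}_xQ$ equals that of $\hat{\mathcal{A}}_xQ$, which equals that of $\hat{\mathcal{A}}_x$). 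Alternatively, both quantities can be phrased purely in terms of the singular values of the operator $\mathcal{A}_x$, which are basis-independent.

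Finally, the inequality $\|\hat{\mathcal{A}}_x\|_2\le\|\hat{\mathcal{A}}_x\|_{\mathrm{F}}$ is the classical bound: if $\sigma_1\ge\cdots\ge\sigma_n\ge0$ are the singular values of $\hat{\mathcal{A}}_x$, then $\|\hat{\mathcal{A}}_x\|_2=\sigma_1$ while $\|\hat{\mathcal{A}}_x\|_{\mathrm{F}}=(\sum_i\sigma_i^2)^{1/2}\ge\sigma_1$. Combining the three pieces gives $\|\mathcal{A}_x\|=\|\hat{\mathcal{A}}_x\|_2\le\|\hat{\mathcal{A}}_x\|_{\mathrm{F}}$, as claimed.

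I do not anticipate a genuine obstacle here; the only point requiring a little care is being explicit that a change of orthonormal basis corresponds to an orthogonal matrix $Q$ (not merely an invertible one) and that the induced transformation on matrix representations is the conjugation $\hat{\mathcal{A}}_x\mapsto Q^T\hat{\mathcal{A}}_xQ$ — it is exactly this orthogonality that makes both norms invariant, whereas a general similarity transformation would preserve neither. Everything else is the standard Euclidean theory of the spectral and Frobenius norms transported verbatim through the isometry.
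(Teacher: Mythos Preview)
Your proposal is correct and follows essentially the same approach as the paper: establish that a change of orthonormal basis acts by orthogonal conjugation $\hat{\mathcal{A}}_x \mapsto Q^T\hat{\mathcal{A}}_x Q$, use orthogonal invariance of the spectral and Frobenius norms, and identify $\|\mathcal{A}_x\|$ with $\|\hat{\mathcal{A}}_x\|_2$ via the coordinate isometry $T_x\mathcal{M}\cong\mathbb{R}^n$. The only cosmetic difference is that you justify $\|\hat{\mathcal{A}}_x\|_2\le\|\hat{\mathcal{A}}_x\|_{\mathrm{F}}$ via singular values whereas the paper simply cites it as a standard fact.
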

\begin{proof}
	Suppose that there are two orthonormal bases $\{E_{i}\}_{i=1}^{n},\{E^{\prime}_{i}\}_{i=1}^{n}$ on $T_{x} \mathcal{M}$.
	With respect to them, let $ P \in \mathbb{R}^{n \times n}$ denote the change-of-basis matrix, i.e.,
	$ [P]_{kj} := \langle E^{\prime}_{j},E_{k}\rangle_{x}$, for $ 1\leq k,j \leq n $; then, $ P $ is orthogonal.  
	Let $ \hat{\mathcal{A}}_{x}, \hat{\mathcal{A}}^{\prime}_{x} \in \mathbb{R}^{n \times n}$ denote the matrix representations of $ \mathcal{A}_{x} $ under the two bases, respectively.
	%From \cite[10.7, P298]{axler2015linear}, 
	We have
	$\hat{\mathcal{A}}_{x}^{\prime}=P^{-1} \hat{\mathcal{A}}_{x} P.$ Then,
	$
	\|\hat{\mathcal{A}}_{x}^{\prime}\|=\|P^{-1} \hat{\mathcal{A}}_{x} P\|=\|\hat{\mathcal{A}}_{x}\|
	$
	holds for the Frobenius norm or the spectral norm.
	Therefore, the values $\|\hat{\mathcal{A}}_{x}\|_{2}$ and $\|\hat{\mathcal{A}}_{x}\|_{\mathrm{F}}$ are invariant under a change of orthonormal basis.
	Now, consider an orthonormal basis $\{E_{i}\}_{i=1}^{n}$ on $T_{x} \mathcal{M}$.
	For any $ y \in T_{x} \mathcal{M}$, its vector representation $ \hat{y} \in \mathbb{R}^{n}$ is defined by $y=\sum_{i=1}^{n} \hat{y}_{i} E_{i}$.
	Accordingly, we have $\widehat{\mathcal{A}_{x} y}=\hat{\mathcal{A}}_{x} \hat{y}$, i.e.,
	$\mathcal{A}_{x} y=\sum_{i=1}^{n}(\hat{\mathcal{A}}_{x} \hat{y})_{i} E_{i}$, 
	%(\cite[3.65, P85]{axler2015linear}),
	and from the orthonormal property of the basis, 
	%(\cite[6.25, P180]{axler2015linear}), 
	we have
	\begin{equation*}
	\left\|\mathcal{A}_{x} y\right\|_{x}^{2}
	=\|\sum_{i=1}^{n}(\hat{\mathcal{A}}_{x} \hat{y})_{i} E_{i}\|_{x}^{2}
	=\sum_{i=1}^{n}(\hat{\mathcal{A}}_{x} \hat{y})_{i}^{2}
	=\|\hat{\mathcal{A}}_{x} \hat{y}\|_{2}^{2}.
	\end{equation*}
	Thus, $\left\|\mathcal{A}_{x} y\right\|_{x}=\|\hat{\mathcal{A}}_{x} \hat{y}\|_{2}$ for any $ y \in T_{x} \mathcal{M}$.
	Finally, we have
	$$
	\left\|\mathcal{A}_{x}\right\|=\sup _{\substack{y \in T_{x} \mathcal{M}, \|y\|_{x}=1}}\left\|\mathcal{A}_{x} y\right\|_{x}
	=\sup _{\substack{\hat{y} \in \mathbb{R}^{n}, \|\hat{y}\|_{2}=1}}\|\hat{\mathcal{A}}_{x} \hat{y}\|_{2}
	=\|\hat{\mathcal{A}}_{x}\|_{2}.
	$$
	It is clear that $ \| X \|_{2} \leq \| X \|_{\mathrm{F}} $ for any matrix $X$.
\end{proof}

Yet, we have not clarified the continuity about $x \mapsto\left\|A_x\right\|$. The following proposition proves the continuity of an important case of $ A_x $ that appears in (\ref{RCOP}).
\begin{proposition}\label{prop:cont_Hessf}
	Consider $f$ in (\ref{RCOP}).
	Let $\hat{\operatorname{Hess}}f(x) \in \operatorname{Sym}(d)$ denote the matrix representation of $\operatorname{Hess}f(x)$ under an arbitrary orthonormal basis of $T_{x} \mathcal{M}$.
	Then,
	$
	x \mapsto \| \hat{\operatorname{Hess}}f(x)\|
	$
	is a continuous scalar field on $\mathcal{M}$ for $\| \cdot \|_{\mathrm{F}}$ or $\| \cdot \|_{2}$.
	Moreover, $x \mapsto \| \operatorname{Hess}f(x)\|$
	is a continuous scalar field on $\mathcal{M}$.
\end{proposition}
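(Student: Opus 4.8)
The plan is to reduce this to an ordinary continuity statement for a matrix-valued map, which is delicate only because the operator $\operatorname{Hess}f(x)$ acts on the varying tangent space $T_{x}\mathcal{M}$. The key observation is that, by Lemma \ref{lem:InvariantOrthonormalBasis}, both $\|\hat{\operatorname{Hess}}f(x)\|_{\mathrm{F}}$ and $\|\hat{\operatorname{Hess}}f(x)\|_{2}$ are invariant under a change of orthonormal basis of $T_{x}\mathcal{M}$. Hence the scalar field $x\mapsto\|\hat{\operatorname{Hess}}f(x)\|$ is well defined, and to prove its continuity at an arbitrary $\bar{x}\in\mathcal{M}$ we are free to choose, on a neighborhood of $\bar{x}$, any convenient family of orthonormal bases, one for each tangent space.

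First I would build a \emph{smooth} local orthonormal frame near $\bar{x}$: on a coordinate chart around $\bar{x}$, apply the Gram--Schmidt process to the coordinate vector fields with respect to the Riemannian metric. Since the metric components are smooth and the intermediate vector fields have strictly positive norm, Gram--Schmidt uses only smooth operations, producing smooth vector fields $E_{1},\dots,E_{d}$ on a neighborhood $\mathcal{U}$ of $\bar{x}$ with $\langle E_{i}(x),E_{j}(x)\rangle_{x}=\delta_{ij}$ for all $x\in\mathcal{U}$. With respect to this frame, the matrix representation has entries $[\hat{\operatorname{Hess}}f(x)]_{ij}=\langle\operatorname{Hess}f(x)[E_{j}(x)],E_{i}(x)\rangle_{x}$ for $x\in\mathcal{U}$. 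Because $f$ is smooth, $\operatorname{grad}f\in\mathfrak{X}(\mathcal{M})$ is smooth, so $\operatorname{Hess}f=\nabla(\operatorname{grad}f)$ is a smooth $(1,1)$-tensor field; contracting it against the smooth vector fields $E_{i},E_{j}$ through the smooth metric yields a smooth real-valued function on $\mathcal{U}$. Thus each entry $x\mapsto[\hat{\operatorname{Hess}}f(x)]_{ij}$ is smooth, hence the map $x\mapsto\hat{\operatorname{Hess}}f(x)\in\operatorname{Sym}(d)$ is continuous on $\mathcal{U}$. Composing with the continuous matrix norm $\|\cdot\|_{\mathrm{F}}$ or $\|\cdot\|_{2}$ on $\mathbb{R}^{d\times d}$ shows $x\mapsto\|\hat{\operatorname{Hess}}f(x)\|$ is continuous at $\bar{x}$; since $\bar{x}$ is arbitrary, it is continuous on $\mathcal{M}$. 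The ``moreover'' part is then immediate: by Lemma \ref{lem:InvariantOrthonormalBasis}, $\|\operatorname{Hess}f(x)\|=\|\hat{\operatorname{Hess}}f(x)\|_{2}$ for every $x$, so continuity of the right-hand side transfers to $x\mapsto\|\operatorname{Hess}f(x)\|$.

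The main obstacle is making airtight the smoothness of $x\mapsto\langle\operatorname{Hess}f(x)[E_{j}(x)],E_{i}(x)\rangle_{x}$, i.e., that $\operatorname{Hess}f$ is a smooth tensor field and that its contraction with a smooth frame is smooth. In local coordinates this follows from the standard formula for $\operatorname{Hess}f$ in terms of second partial derivatives of $f$ and Christoffel symbols, all of which are smooth; equivalently, one invokes the general fact that the covariant derivative of a smooth section is a smooth section. The only other place needing a one-line justification is the smoothness of the Gram--Schmidt orthonormalization. Everything beyond these points is routine.
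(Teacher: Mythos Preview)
Your proposal is correct and follows essentially the same route as the paper: reduce to a local argument, pick a smooth local orthonormal frame (the paper cites the existence result directly while you construct it via Gram--Schmidt, which is how that result is proved anyway), observe that the entries $\langle\operatorname{Hess}f(x)[E_j(x)],E_i(x)\rangle_x$ vary smoothly, and compose with a continuous matrix norm, invoking Lemma~\ref{lem:InvariantOrthonormalBasis} for well-definedness and for the ``moreover'' part.
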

\begin{proof}
Lemma \ref{lem:InvariantOrthonormalBasis} shows that $x \mapsto \| \hat{\operatorname{Hess}}f(x) \|$ is well defined, so 
it suffices to prove its continuity. From Corollary 13.8 in \cite{lee2013smooth}, for each $ \bar{x} \in \mathcal{M} $ there is a smooth, orthonormal local frame $\{E_{i}\}_{i=1}^{d}$ on a neighborhood $ \mathcal{U} $ of $ \bar{x} $; namely, $\{E_{1}(x), \ldots,E_{d}(x)\}$ forms an orthonormal basis on $ T_{x} \mathcal{M} $ for all $ x \in \mathcal{U}. $ 
Choose such a local frame $\{E_{i}\}_{i=1}^{d}$ around $\bar{x}$; then, the matrix representation of $\operatorname{Hess}f(x)$ is given by
$
[\hat{\operatorname{Hess}}f(x)]_{kj}
:=
\left\langle\operatorname{Hess}f(x)[E_{j}(x)] , E_{k}(x)\right\rangle_{x} 
=
\langle(\nabla_{E_{j}} \operatorname{grad} f)(x), E_{k}(x)\rangle_{x} \text{ for }  1\leq k,j \leq d .
$
From the smoothness of the Riemannian metric (\ref{eq:RiemannianMetric}), it follows that $ x \mapsto \hat{\operatorname{Hess}}f(x) $ is a continuous function from $\mathcal{U} \subset \mathcal{M}$ to $\operatorname{Sym}(d)$. Since matrix norms are continuous, $\| \hat{\operatorname{Hess}}f(x)\|$ is continuous on $\mathcal{U} \ni \bar{x}.$ This argument holds for any $ \bar{x} \in \mathcal{M} $.
From Lemma \ref{lem:InvariantOrthonormalBasis}, $\| \hat{\operatorname{Hess}}f(x) \|_{2}=\| \operatorname{Hess}f(x) \|$ for any $x\in\mathcal{M} $, which completes the proof.
\end{proof}

The above result can be applied verbatim to the Hessian of constraint functions $\left\{ h_{i}\right\}_{i=1}^{l}$, 
$\left\{g_{i}\right\}_{i=1}^{m}$ in (\ref{RCOP}).
The next proposition can be proved similarly, as in Lemma \ref{lem:InvariantOrthonormalBasis} and Proposition \ref{prop:cont_Hessf}.

\begin{proposition}\label{cont_H&G}
	Consider $h,g$ in (\ref{RCOP}) and the linear operators $\mathcal{H}_{x}$, $ \mathcal{G}_{x}$ defined in (\ref{defn:HxGx}).
	Then,
	$x \mapsto \left\| \mathcal{H}_{x}\right\|$ and 
	$x \mapsto\left\|\mathcal{G}_{x}\right\|$
	are continuous scalar fields on $\mathcal{M}$.
\end{proposition}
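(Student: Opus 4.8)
The plan is to mirror the proof of Proposition \ref{prop:cont_Hessf}, the only structural difference being that the operators $\mathcal{H}_{x}$ and $\mathcal{G}_{x}$ in (\ref{defn:HxGx}) map from the \emph{fixed} Euclidean spaces $\mathbb{R}^{l}$ and $\mathbb{R}^{m}$ into the \emph{varying} tangent space $T_{x}\mathcal{M}$, so only the codomain side moves. I treat $\mathcal{H}_{x}$; the argument for $\mathcal{G}_{x}$ is verbatim with $l$ replaced by $m$. First I fix an arbitrary $\bar{x}\in\mathcal{M}$ and, exactly as in the proof of Proposition \ref{prop:cont_Hessf}, invoke Corollary 13.8 in \cite{lee2013smooth} to obtain a smooth orthonormal local frame $\{E_{i}\}_{i=1}^{d}$ on a neighborhood $\mathcal{U}$ of $\bar{x}$. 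Using the standard basis of $\mathbb{R}^{l}$ on the domain and $\{E_{i}(x)\}$ on the codomain, the matrix representation $\hat{\mathcal{H}}_{x}\in\mathbb{R}^{d\times l}$ of $\mathcal{H}_{x}$ has entries $[\hat{\mathcal{H}}_{x}]_{ki}=\langle\operatorname{grad} h_{i}(x),E_{k}(x)\rangle_{x}$.

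Second, I would check that $\|\mathcal{H}_{x}\|=\|\hat{\mathcal{H}}_{x}\|_{2}$: since $\{E_{i}(x)\}$ is orthonormal, the coordinate map $T_{x}\mathcal{M}\to\mathbb{R}^{d}$ is a linear isometry for the Euclidean norm, while $\mathbb{R}^{l}$ already carries its standard Euclidean norm, so the operator norm of $\mathcal{H}_{x}$ coincides with the spectral norm of $\hat{\mathcal{H}}_{x}$. Moreover, just as in Lemma \ref{lem:InvariantOrthonormalBasis}, passing to another orthonormal frame replaces $\hat{\mathcal{H}}_{x}$ by $P^{-1}\hat{\mathcal{H}}_{x}$ for an orthogonal $P$ (only a left change of basis occurs, because the domain basis is fixed), whence $\|\hat{\mathcal{H}}_{x}\|_{2}$ and $\|\hat{\mathcal{H}}_{x}\|_{\mathrm{F}}$ are independent of the chosen frame and $x\mapsto\|\hat{\mathcal{H}}_{x}\|$ is a well-defined scalar field. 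By smoothness of the Riemannian metric (\ref{eq:RiemannianMetric}) and of each gradient vector field $x\mapsto\operatorname{grad} h_{i}(x)$, the entries $[\hat{\mathcal{H}}_{x}]_{ki}$ depend continuously (indeed smoothly) on $x\in\mathcal{U}$; since matrix norms are continuous, $x\mapsto\|\hat{\mathcal{H}}_{x}\|=\|\mathcal{H}_{x}\|$ is continuous on $\mathcal{U}\ni\bar{x}$. As $\bar{x}$ was arbitrary, this gives continuity on all of $\mathcal{M}$, and likewise for $\mathcal{G}_{x}$.

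I do not expect a genuine obstacle here; the single point requiring care — the analogue of the subtlety flagged just before Proposition \ref{prop:cont_Hessf} — is that $\|\mathcal{H}_{x}\|$ is measured with an inner product on $T_{x}\mathcal{M}$ that itself varies with $x$, so the identification with the fixed-size matrix norm $\|\hat{\mathcal{H}}_{x}\|_{2}$ must be justified through the orthonormality of the frame rather than assumed. As an alternative route that avoids local frames altogether, one may note that $\|\mathcal{H}_{x}\|^{2}=\lambda_{\max}(G(x))$, where $G(x)$ is the Gram matrix with $[G(x)]_{ij}=\langle\operatorname{grad} h_{i}(x),\operatorname{grad} h_{j}(x)\rangle_{x}$; this map $x\mapsto G(x)$ is continuous by smoothness of the metric and the gradients, and $\lambda_{\max}$ is continuous on $\operatorname{Sym}(l)$, which again yields the claim.
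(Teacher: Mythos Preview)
Your proposal is correct and follows essentially the same approach as the paper: the paper's own proof of this proposition is just the one-line remark that it can be proved similarly to Lemma \ref{lem:InvariantOrthonormalBasis} and Proposition \ref{prop:cont_Hessf}, and you have carefully carried out exactly that adaptation (noting correctly that only the codomain basis varies, so the change-of-basis matrix acts on one side only). Your alternative Gram-matrix argument via $\|\mathcal{H}_x\|^2=\lambda_{\max}(G(x))$ is a clean frame-free variant, but the main route already matches what the paper intends.
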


\begin{proposition}\label{prop:matrix_nablaFw}
	Given $w=(x, y, z, s) \in \mathcal{N}$, consider the operator $\nabla F(w)$ in (\ref{eq:CovariantDerivativeKKTVectorField}). Let $\{E_{i}\}_{i=1}^{d}$ be an orthonormal basis of $ T_{x} \mathcal{M} $ and $ \{e_{i}\}_{i=1}^{l} $, $ \{\dot{e}_{i}\}_{i=1}^{m} $ be the standard bases of $ \mathbb{R}^{l}, \mathbb{R}^{m}$, respectively.
	If we choose an orthonormal basis of $T_{w} \mathcal{N}$ as follows:
	\begin{equation}\label{product_basis}
		\left\lbrace \left(E_{i}, 0,0,0\right) \right\rbrace_{i=1}^d \cup
		\left\lbrace \left(0_{x},e_{i}, 0,0\right) \right\rbrace_{i=1}^l \cup
		\left\lbrace \left(0_{x}, 0,\dot{e}_{i}, 0\right) \right\rbrace_{i=1}^m \cup
		\left\lbrace \left(0_{x}, 0,0,\dot{e}_{i}\right) \right\rbrace_{i=1}^m,
	\end{equation}
	then, the matrix representation of $\nabla F(w)$ is given by
	\begin{equation*}
	\hat{\nabla} F(w)=
	\left(\begin{array}{cccc}
		Q & B & C & 0 \\
		B^{T} & 0 & 0 & 0 \\
		C^{T} & 0 & 0 & I \\
		0 & 0 & S & Z
	\end{array}\right),
	\end{equation*}
	i.e., a matrix of order $ (d+l+2m) $ and, where 
	
	$Q:=Q(w) \in \operatorname{Sym}(d)$ is given by $ [Q]_{k j}:=\left\langle\operatorname{Hess}_{x} \mathcal{L}(w)[E_{j}], E_{k}\right\rangle_{x}$ for $1 \leq k, j \leq d$;
	
	$ B:=B(x)=[\hat{\operatorname{grad}} h_{1}(x), \cdots, \hat{\operatorname{grad}} h_{l}(x)] \in \mathbb{R}^{d \times l};$ 
	
	$ C:=C(x)=[ \hat{\operatorname{grad}} g_{1}(x), \cdots, \hat{\operatorname{grad}} g_{m}(x)] \in \mathbb{R}^{d \times m}$ and the ``hat'' symbol above means the corresponding vector representation under the basis $\{E_{i}\}_{i=1}^{d}$.
	
	In this case, there is a continuous scalar field $T\colon \mathcal{N} \to \mathbb{R}$ such that for any $w$, $ \|Q(w)\|_{\mathrm{F}} \leq T(w)$. 
	Moreover, $ x \mapsto\|B(x)\|_{\mathrm{F}}$ and $x \mapsto\|C(x)\|_{\mathrm{F}} $ are continuous scalar fields on $\mathcal{M}$.
\end{proposition}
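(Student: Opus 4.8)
The plan is to prove the proposition in two stages. First I verify the block form of $\hat\nabla F(w)$ by computing the action of $\nabla F(w)$ on the basis (\ref{product_basis}) one vector at a time; then I establish the three continuity claims by reducing everything to the basis-invariance of the Frobenius norm (Lemma \ref{lem:InvariantOrthonormalBasis}) together with Propositions \ref{prop:cont_Hessf} and \ref{cont_H&G}. For the first stage I begin by noting that the family (\ref{product_basis}) is orthonormal with respect to the natural product metric on $T_w\mathcal{N}$, since each of its four sub-families is orthonormal in its own factor and the factors are mutually orthogonal. I then read off the columns of $\hat\nabla F(w)$ from formula (\ref{eq:CovariantDerivativeKKTVectorField}): for $\Delta w=(E_j,0,0,0)$ one obtains $\nabla F(w)[\Delta w]=(\operatorname{Hess}_x\mathcal{L}(w)[E_j],\,\mathcal{H}_x^*[E_j],\,\mathcal{G}_x^*[E_j],\,0)$, and writing this in the basis (\ref{product_basis}) gives a column whose four blocks are the $j$-th columns of $Q$, $B^{T}$, $C^{T}$ and $0$, because the $i$-th entry of $\mathcal{H}_x^*[E_j]$ is $\langle\operatorname{grad}h_i(x),E_j\rangle_x=(B^{T})_{ij}$ and likewise for $\mathcal{G}_x^*$. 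The other three sub-families are treated identically: $(0_x,e_i,0,0)\mapsto(\operatorname{grad}h_i(x),0,0,0)$ yields the $i$-th column of the block $(B;0;0;0)$; $(0_x,0,\dot e_i,0)\mapsto(\operatorname{grad}g_i(x),0,0,s_i\dot e_i)$ yields the $i$-th column of $(C;0;0;S)$; and $(0_x,0,0,\dot e_i)\mapsto(0_x,0,\dot e_i,z_i\dot e_i)$ yields the $i$-th column of $(0;0;I;Z)$. Collecting these columns gives exactly the asserted matrix, and symmetry of $Q$ is immediate since $\operatorname{Hess}_x\mathcal{L}(w)$ is self-adjoint.

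For the continuity claims, I would first handle $B$ and $C$. Since $\{E_i\}_{i=1}^d$ is orthonormal, $\|B(x)\|_{\mathrm F}^2=\sum_{i=1}^l\|\operatorname{grad}h_i(x)\|_x^2$ and $\|C(x)\|_{\mathrm F}^2=\sum_{i=1}^m\|\operatorname{grad}g_i(x)\|_x^2$; each summand is a smooth function of $x$ because the $\operatorname{grad}h_i$, $\operatorname{grad}g_i$ are smooth vector fields and the Riemannian metric (\ref{eq:RiemannianMetric}) is smooth, so $x\mapsto\|B(x)\|_{\mathrm F}$ and $x\mapsto\|C(x)\|_{\mathrm F}$ are continuous. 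For $Q$, Lemma \ref{lem:InvariantOrthonormalBasis} guarantees that $\|Q(w)\|_{\mathrm F}$ does not depend on the orthonormal basis of $T_x\mathcal{M}$ used to form it, so I may expand $\operatorname{Hess}_x\mathcal{L}(w)$ by (\ref{eq:HessianLagrange}) and apply the triangle inequality for $\|\cdot\|_{\mathrm F}$ to get
\[
\|Q(w)\|_{\mathrm F}\ \le\ \|\hat{\operatorname{Hess}}f(x)\|_{\mathrm F}+\sum_{i=1}^l|y_i|\,\|\hat{\operatorname{Hess}}h_i(x)\|_{\mathrm F}+\sum_{i=1}^m|z_i|\,\|\hat{\operatorname{Hess}}g_i(x)\|_{\mathrm F}\ =:\ T(w).
\]
Proposition \ref{prop:cont_Hessf}, applied to $f$ and verbatim to each $h_i$ and $g_i$, shows that every Frobenius norm on the right side is a continuous function of $x$, and $w\mapsto|y_i|$, $w\mapsto|z_i|$ are continuous; hence $T\colon\mathcal{N}\to\mathbb{R}$ is a continuous scalar field with $\|Q(w)\|_{\mathrm F}\le T(w)$, as required.

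I do not expect a genuine obstacle here; the one subtlety is conceptual rather than technical. The matrices $Q(w)$, $B(x)$, $C(x)$ are defined only after choosing an orthonormal basis of $T_x\mathcal{M}$, and such a basis cannot in general be chosen to depend continuously on $x$ over all of $\mathcal{M}$, so any continuity statement about these objects must be routed through quantities that are invariant under orthonormal change of basis — which is precisely why Lemma \ref{lem:InvariantOrthonormalBasis} is invoked and why $\|Q(w)\|_{\mathrm F}$ is bounded by the explicit $T(w)$ (built from the already-established continuous fields of Proposition \ref{prop:cont_Hessf}) rather than its continuity being argued directly.
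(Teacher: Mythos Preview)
Your proposal is correct and follows essentially the same approach as the paper: the paper omits the block-matrix verification as ``a trivial process'' (you simply supply the details), and your continuity arguments for $\|B(x)\|_{\mathrm F}$, $\|C(x)\|_{\mathrm F}$ via $\sum_i\|\operatorname{grad}h_i(x)\|_x^2$ and for $\|Q(w)\|_{\mathrm F}$ via the explicit bound $T(w)$ built from (\ref{eq:HessianLagrange}) and Proposition~\ref{prop:cont_Hessf} match the paper's proof line for line.
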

\begin{proof}
The matrix $\hat{\nabla} F(w)$ under the basis (\ref{product_basis}) is obtained through a trivial process, so we will omit its description.
From relation (\ref{eq:HessianLagrange}),
we have
$
Q(w):=\hat{\operatorname{Hess}}_{x} \mathcal{L}(w)=
\hat{\operatorname{Hess}}f(x)
+\sum_{i=1}^{l} y_{i} \hat{\operatorname{Hess}} h_{i}(x)
+\sum_{i=1}^{m} z_{i} \hat{\operatorname{Hess}} g_{i}(x),
$
under the same basis. Thus,
$$
\left\| Q(w)\right\|_{\mathrm{F}}
\leq
\| \hat{\operatorname{Hess}}f(x)\|_{\mathrm{F}} 
+\sum_{i=1}^{l} |y_{i}| \| \hat{\operatorname{Hess}} h_{i}(x)\|_{\mathrm{F}}
+\sum_{i=1}^{m} |z_{i}| \| \hat{\operatorname{Hess}} g_{i}(x)\|_{\mathrm{F}}=:T(w).
$$
From Proposition \ref{prop:cont_Hessf}, $\|\hat{\operatorname{Hess}}f(x)\|_{\mathrm{F}}$,
$\|\hat{\operatorname{Hess}} h_{i}(x)\|_{\mathrm{F}}$,
and $\|\hat{\operatorname{Hess}} g_{i}(x)\|_{\mathrm{F}}$ are all continuous with respect to $ x $, thus, $ T $ is continuous.
As for $ x \mapsto\|B(x)\|_{\mathrm{F}}$, since the basis $\{E_{i}\}_{i=1}^{d}$ is orthonormal, 
$$
\|B(x)\|_{\mathrm{F}}^{2}
=\sum_{i=1}^{l}\|\hat{\operatorname{grad}} h_{i}(x)\|_{2}^{2}
=\sum_{i=1}^{l}\left\|\operatorname{grad} h_{i}(x)\right\|_{x}^{2},
$$
which implies continuity by (\ref{eq:RiemannianMetric}).
The claim for $ x \mapsto\|C(x)\|_{\mathrm{F}}$ can be proven similarly.
\end{proof}

\subsection{Global Convergence Theorem}

Now, we are ready to prove the global convergence Theorem \ref{thm:global} by following the procedure in \cite{el1996formulation}.
In what follows, we will omit similar content in \cite{el1996formulation} and focus on the difficulties encountered when adapting the proof of EIPM to RIPM. 

\begin{proposition}[Boundedness of the sequences]\label{boundedness}
	Let $\left\{w_{k}\right\}$ be a sequence generated by Algorithm \ref{algo:GlobalRIPM} and suppose that \ref{C1}-\ref{C3} hold.
	If $\epsilon>0$ and $w_{k} \in \Omega(\epsilon)$ for all $ k $, then
	\begin{enumerate}[(a)]
		\item $ \{z_{k}^{T} s_{k}\} $, $\left\{(z_{k})_{i}(s_{k})_{i}\right\}, i=1, \ldots, m$, are all bounded above and below away from zero;
		\item $\left\{z_{k}\right\}$ and $\left\{s_{k}\right\}$ are bounded above and component-wise bounded away from zero;
		\item $\left\{w_{k}\right\}$ is bounded;
		\item $\{ \|\nabla F (w_k)^{-1}\| \} $ is bounded;
		\item $\left\{\Delta w_{k}\right\}$ is bounded.
	\end{enumerate}
\end{proposition}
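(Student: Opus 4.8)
The plan is to adapt, step by step, the boundedness proof for EIPM in \cite{el1996formulation}, establishing (a)--(e) in that order since each feeds the next. Throughout, the hypothesis $w_k\in\Omega(\epsilon)$ furnishes the three facts used repeatedly: $\sqrt{\epsilon}\le\|F(w_k)\|\le\sqrt{\varphi_0}$ (from $\epsilon\le\varphi(w_k)=\|F(w_k)\|^2\le\varphi_0$), $\min(Z_kS_ke)\ge(\tau_1/2)(z_k^Ts_k/m)$, and $z_k^Ts_k\ge(\tau_2/2)\|F(w_k)\|$. Besides the chain (\ref{genlma2}), the one genuinely Riemannian ingredient is completeness: by Hopf--Rinow a bounded subset of $\mathcal{M}$ (hence of $\mathcal{N}$) is relatively compact, which is exactly what turns each ``bounded'' conclusion into a compact set on which the continuity results of Subsection \ref{subsec:Continuity} can be invoked.

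\textbf{Parts (a) and (b).} From (\ref{genlma2}) and $\|F(w_k)\|\le\sqrt{\varphi_0}$ we get $z_k^Ts_k\le\sqrt{m}\,\|F(w_k)\|\le\sqrt{m\varphi_0}$, and since $0\le(z_k)_i(s_k)_i\le z_k^Ts_k$ this gives the upper bounds in (a); for the lower bounds, $\|F(w_k)\|\ge\sqrt{\epsilon}$ together with the two centrality inequalities yields $z_k^Ts_k\ge(\tau_2/2)\sqrt{\epsilon}>0$ and then $(z_k)_i(s_k)_i\ge\min(Z_kS_ke)\ge(\tau_1\tau_2/(4m))\sqrt{\epsilon}>0$. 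For (b): $\{z_k\}$ is bounded above by \ref{C2}; boundedness of $\{x_k\}$ (\ref{C2}) and completeness make $K:=\overline{\{x_k\}}$ compact, and since $\|g(x_k)+s_k\|_2\le\|F(w_k)\|\le\sqrt{\varphi_0}$ with $g$ continuous on $K$, $\{s_k\}$ is bounded above; dividing the positive lower bound on $(z_k)_i(s_k)_i$ from (a) by these upper bounds gives the component-wise lower bounds on $\{z_k\}$ and $\{s_k\}$. All of this is identical to the Euclidean argument.

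\textbf{Part (c), the main obstacle.} It remains to bound $\{y_k\}$; then $\{w_k\}$ is bounded. From $\|F(w_k)\|^2=\|\operatorname{grad}_x\mathcal{L}(w_k)\|^2+\cdots$ we have $\|\operatorname{grad}_x\mathcal{L}(w_k)\|\le\sqrt{\varphi_0}$, and using $\operatorname{grad}_x\mathcal{L}(w_k)=\operatorname{grad}f(x_k)+\mathcal{H}_{x_k}[y_k]+\mathcal{G}_{x_k}[z_k]$, the triangle inequality, continuity of $\operatorname{grad}f$ and of $x\mapsto\|\mathcal{G}_x\|$ (Proposition \ref{cont_H&G}) on the compact $K$, and boundedness of $\{z_k\}$, one obtains $\sup_k\|\mathcal{H}_{x_k}[y_k]\|=:M<\infty$. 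To pass from $\|\mathcal{H}_{x_k}[y_k]\|$ to $\|y_k\|$, note \ref{C1} makes $\{\operatorname{grad}h_i(x)\}_{i=1}^l$ linearly independent for every $x$, so $\mathcal{H}_x$ is injective and $\mathcal{H}_x^*\mathcal{H}_x$ is positive definite; represented in a local orthonormal frame (as in Propositions \ref{prop:cont_Hessf} and \ref{prop:matrix_nablaFw}) it equals $B(x)^TB(x)$ with $x\mapsto B(x)$ continuous, so $x\mapsto\lambda_{\min}(\mathcal{H}_x^*\mathcal{H}_x)$ is a continuous, strictly positive scalar field, hence bounded below by some $\lambda>0$ on $K$. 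Then $y_k=(\mathcal{H}_{x_k}^*\mathcal{H}_{x_k})^{-1}\mathcal{H}_{x_k}^*[\mathcal{H}_{x_k}[y_k]]$ gives $\|y_k\|\le\lambda^{-1}\bigl(\sup_{x\in K}\|\mathcal{H}_x\|\bigr)M$, so $\{y_k\}$, and hence $\{w_k\}$, is bounded. The delicate point is precisely this uniform positive lower bound on $\lambda_{\min}(\mathcal{H}_x^*\mathcal{H}_x)$ over $K$: trivial when $\mathcal{M}=\mathbb{R}^n$, it rests here on the continuity machinery of Subsection \ref{subsec:Continuity}.

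\textbf{Parts (d) and (e).} By (c) and completeness $\overline{\{w_k\}}$ is compact; its limit points still satisfy the centrality conditions (since $z_k^Ts_k$ and $\varphi(w_k)$ stay bounded away from $0$ along the sequence), so $\overline{\{w_k\}}$ is a compact subset of $\Omega(\epsilon)\subseteq\Omega(0)$. By \ref{C3}, $\nabla F(w)$ is nonsingular there, and $\nabla F$ is continuous since $F$ is smooth on $\Omega(0)$ (\ref{C1}); applying Lemma \ref{lem:nonsingular} at each point and passing to a finite subcover yields $\|\nabla F(w_k)^{-1}\|\le\Xi$ for a constant $\Xi$, proving (d). Finally, (\ref{eq:NT}) gives $\Delta w_k=\nabla F(w_k)^{-1}(-F(w_k)+\sigma_k\rho_k\hat{e})$, so $\|\Delta w_k\|\le\Xi(\|F(w_k)\|+\sigma_k\rho_k\|\hat{e}\|)\le2\Xi\sqrt{\varphi_0}$, using $\sigma_k<1$, $\rho_k\le\|F(w_k)\|/\sqrt{m}$, $\|\hat{e}\|=\|e\|_2=\sqrt{m}$, and $\|F(w_k)\|\le\sqrt{\varphi_0}$; this proves (e).
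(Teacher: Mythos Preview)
Your argument is correct and, for parts (a), (b), (c), and (e), essentially coincides with the paper's (the paper simply defers (a), (b), (e) to the Euclidean references \cite{el1996formulation,bonettini2005inexact}, while you spell the bounds out; for (c) both you and the paper invert $\mathcal{H}_{x}^{*}\mathcal{H}_{x}$ and exploit continuity on the compact set $\overline{\{x_k\}}$). The one substantive difference is part (d). The paper passes to the matrix representation $\hat{\nabla}F(w_k)$ of Proposition~\ref{prop:matrix_nablaFw}, bounds the Frobenius norms of the blocks $Q_k,B_k,C_k$ via the continuity results of Subsection~\ref{subsec:Continuity}, and then invokes the explicit block-inversion estimate from \cite[Lemma~6.2]{el1996formulation} (or \cite[Theorem~2(c)]{bonettini2005inexact}) to bound $\|[\hat{\nabla}F(w_k)]^{-1}\|_{\mathrm{F}}\ge\|\nabla F(w_k)^{-1}\|$. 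You instead argue topologically: $\overline{\{w_k\}}$ is compact by Hopf--Rinow, its limit points remain in $\Omega(\epsilon)$ because the relevant denominators $z_k^{T}s_k$ and $\|F(w_k)\|$ stay bounded away from zero by (a), hence \ref{C3} furnishes nonsingularity on the whole closure, and a finite subcover built from Lemma~\ref{lem:nonsingular} gives the uniform bound. Your route is shorter and bypasses the matrix machinery of Proposition~\ref{prop:matrix_nablaFw} entirely, at the cost of leaning more heavily on the abstract hypothesis \ref{C3}; the paper's route, by contrast, makes the dependence on the data $f,h,g$ and on the bounds from (a)--(c) completely explicit, and in principle could be used to weaken or verify \ref{C3} in concrete cases.
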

\begin{proof}
The proofs in \cite[Lemma 6.1]{el1996formulation} and/or \cite[Theorem 2 (a)]{bonettini2005inexact} can be applied verbatim to \textbf{(a)}, \textbf{(b)} and \textbf{(e)}.

\textbf{(c)} 
By (b), it suffices to prove that $\left\{y_{k}\right\}$ is bounded. The iteration count $k$ is omitted in what follows. 
By using the notation $ \mathcal{H}_{x}$ and $ \mathcal{G}_{x} $ as defined in (\ref{defn:HxGx}), we have
$
\mathcal{H}_{x}y
=
\operatorname{grad}_{x} \mathcal{L}(w)
-\operatorname{grad} f(x)
-\mathcal{G}_{x}z
=:b.
$
By \ref{C1}, $\mathcal{H}_{x}$ is an injection; then, there exists a unique solution $y$ to $\mathcal{H}_{x}y=b$. Indeed, we have
\begin{equation}\label{68} 
	y
	=[ \left( \mathcal{H}_{x}^{*} \mathcal{H}_{x}\right)^{-1} \mathcal{H}_{x}^{*}]\left( \operatorname{grad}_{x} \mathcal{L}(w)
	-\operatorname{grad} f(x)
	-\mathcal{G}_{x}z
	\right) .
\end{equation}
Define $\mathcal{C}_{x}: T_{x} \mathcal{M} \to \mathbb{R}^{l}$ as $ \mathcal{C}_{x}:= \left(\mathcal{H}_{x}^{*} \mathcal{H}_{x}\right)^{-1} \mathcal{H}_{x}^{*}.$
Under an arbitrary orthonormal basis of $T_{x} \mathcal{M}$ and standard basis of $\mathbb{R}^{l}$, if $ \hat{\mathcal{H}}_{x} $
is the matrix corresponding to $ \mathcal{H}_{x} $, then
$\hat{\mathcal{C}}_{x}=(\hat{\mathcal{H}}_{x}^{T} \hat{\mathcal{H}}_{x})^{-1} \hat{\mathcal{H}}_{x}^{T}$. 
It is easy to show that $\left\|\mathcal{C}_{x}\right\|=\|\hat{\mathcal{C}}_{x}\|_{2}$ for any $x$.
We can see that for each $ \bar{x} \in \mathcal{M}$ there is a neighborhood $\mathcal{U}$ of $ \bar{x} $ such that $ x \mapsto \hat{\mathcal{H}}_{x} $ is continuous over $\mathcal{U}$.
Then, by function composition,
$x \mapsto \hat{\mathcal{C}}_{x}$ is also continuous over $\mathcal{U}$.
This implies that $x \mapsto  \left\|\mathcal{C}_{x}\right\|=\|\hat{\mathcal{C}}_{x}\|_{2}$ is continuous at each $ \bar{x} $, and hence, on $ \mathcal{M}$.
Finally, by Proposition \ref{cont_H&G}, $ \left\|\mathcal{C}_{x}\right\|, \left\|\operatorname{grad} f(x)\right\|,$ and $\left\|\mathcal{G}_{x}\right\| $ are all continuous on $ \mathcal{M} $. Because $\{x_{k}\}$ is bounded, by (\ref{68}) we have
$
\left\|y_{k}\right\|
\leq
\left\| \mathcal{C}_{x_{k}} \right\|
\left(
\left\| \operatorname{grad}_{x} \mathcal{L}(w_{k})\right\|
+
\left\|\operatorname{grad} f(x_{k})\right\|
+ 
\left\|\mathcal{G}_{x_{k}}\right\| \left\| z_{k}\right\| 
\right)
\leq
c_{1}
\left(
\sqrt{\varphi_{0}}+
c_{2}
+ c_{3} \left\| z_{k}\right\| 
\right),
$
for some positive constants $ c_{1},c_{2},c_{3}.$
Then, $\left\{y_{k}\right\}$ is bounded because $\left\{z_{k}\right\}$ is bounded.

\textbf{(d)}
For each $ w_{k} $, choose an arbitrary orthonormal basis of $T_{w_{k}} \mathcal{N}.$
If the matrix representation $\hat{\nabla} F(w_{k})$ corresponds to $\nabla F(w_{k})$, then
$[\hat{\nabla} F(w_{k})]^{-1}$ corresponds to $\nabla F(w_{k})^{-1}$. 
By Lemma \ref{lem:InvariantOrthonormalBasis}, we have
$
\left\|\nabla F(w_{k})^{-1}\right\| \leq
\|[\hat{\nabla} F(w_{k})]^{-1}\|_{\mathrm{F}};
$
thus, it is sufficient to show that $\{\|[\hat{\nabla} F(w_{k})]^{-1}\|_{\mathrm{F}} \}$ is bounded.
For convenience, we will choose the basis of $T_{w_{k}} \mathcal{N}$ given in (\ref{product_basis}). 
Then, we have
\begin{equation*}
\hat{\nabla} F(w_{k})=
\left(\begin{array}{cccc}
	Q_{k} & B_{k} & C_{k} & 0 \\
	B^{T}_{k} & 0 & 0 & 0 \\
	C^{T}_{k} & 0 & 0 & I \\
	0 & 0 & S_{k} & Z_{k}
\end{array}\right).
\end{equation*}
By Proposition \ref{prop:matrix_nablaFw},
there is a continuous scalar field $T: \mathcal{N} \rightarrow \mathbb{R}$
such that $\|Q(w)\|_{\mathrm{F}} \leq T(w)$ for all $w \in \mathcal{N}$; and $\|B(x)\|_{\mathrm{F}}$, $\|C(x)\|_{\mathrm{F}}$ are continuous on $\mathcal{M}$.
It follows from the boundedness of $\left\{x_{k}\right\}$ and $\left\{w_{k}\right\}$ that for all $ k $,
$
\left\|Q_{k}\right\|_{\mathrm{F}} \equiv \left\|Q(w_{k})\right\|_{\mathrm{F}} \leq T(w_{k}) \leq c_{4},$ 
$\left\|B_{k}\right\|_{\mathrm{F}} \equiv \left\|B(x_{k})\right\|_{\mathrm{F}} \leq c_{5},$
and $\left\|C_{k}\right\|_{\mathrm{F}} \equiv \left\|C(x_{k})\right\|_{\mathrm{F}} \leq c_{6},$
for some positive constants $ c_{4}, c_{5},$ and $c_{6}.$

On the other hand, whichever basis is used in the form of (\ref{product_basis}), the structure of $\hat{\nabla} F(w_{k})$ and the properties of its block submatrices remain unchanged, e.g., symmetry of $ Q_{k} $; full rank of $B_{k}$; identity matrix $I$ in the third row; all zero matrices; diagonal matrices $S_{k} , Z_{k}$; etc. This ensures that we can obtain the desired result by performing an appropriate decomposition of $\hat{\nabla} F(w_{k})$. Up to this point, we have created all the conditions needed in the proof of the Euclidean version, namely, EIPM. We can make the claim that $\{\|[\hat{\nabla} F(w_{k})]^{-1}\|_{\mathrm{F}} \}$ is bounded by applying the proofs in \cite[Lemma 6.2]{el1996formulation} and/or \cite[Theorem 2 (c)]{bonettini2005inexact} directly.
\end{proof}

\begin{lemma}[$\left\{\bar{\alpha}_{k}\right\}$ bounded away from zero]\label{lem:Step length}
	Let $\left\{w_{k}\right\}$ be generated by Algorithm \ref{algo:GlobalRIPM} with $\mathrm{R} = \operatorname{Exp}$ and let \ref{C1}-\ref{C3} hold.
	If $\epsilon>0$ and $w_{k} \in \Omega(\epsilon)$ for all $ k $, $\left\{\sigma_{k}\right\}$ is bounded away from zero; then, $\left\{\bar{\alpha}_{k}\right\}$ is bounded away from zero.
\end{lemma}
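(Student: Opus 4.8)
The plan is to mirror the Euclidean proof of El-Bakry et al.\ \cite[Lemma 6.3]{el1996formulation}, isolating the one place where the curvature of $\mathcal{M}$ actually intervenes. Since $\bar{\alpha}_k=\min\{\alpha_k^{I},\alpha_k^{II}\}$ by step (3a) of Algorithm \ref{algo:GlobalRIPM}, it suffices to bound $\{\alpha_k^{I}\}$ and $\{\alpha_k^{II}\}$ away from zero separately. Throughout I would use, without further comment, the scalar estimates supplied by Proposition \ref{boundedness}, by the definition of $\Omega(\epsilon)$, and by the fact that $w_k=\bar{\mathrm{R}}_{w_{k-1}}(\alpha_{k-1}\Delta w_{k-1})$ with the accepted step satisfying $\alpha_{k-1}\le\bar{\alpha}_{k-1}\le\alpha_{k-1}^{i}$, so that $f_{k-1}^{i}(\alpha_{k-1})\ge 0$ by (\ref{step}); concretely, $\{z_k\},\{s_k\},\{\Delta w_k\}$ are bounded, $z_k^{T}s_k$ and each $(z_k)_i(s_k)_i$ are bounded above and below away from zero, $\sqrt{\epsilon}\le\|F(w_k)\|\le\sqrt{\varphi_0}$, and the centrality quantities $\min(Z_kS_ke)/(z_k^{T}s_k/m)$ and $z_k^{T}s_k/\|F(w_k)\|$ are controlled exactly as in the Euclidean case. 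These are precisely the ingredients that the EIPM proof consumes.

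For $\alpha_k^{I}$, the first thing to observe is that $f_k^{I}(\alpha)$ involves only $z(\alpha)=z_k+\alpha\Delta z_k$ and $s(\alpha)=s_k+\alpha\Delta s_k$, which are affine in $\alpha$, so no manifold quantity enters. Using the last block of the Newton equation (\ref{eq:NT}), namely $Z_k\Delta s_k+S_k\Delta z_k=-Z_kS_ke+\sigma_k\rho_k e$, one gets the exact identities $(z(\alpha))_i(s(\alpha))_i=(1-\alpha)(z_k)_i(s_k)_i+\alpha\sigma_k\rho_k+\alpha^2(\Delta z_k)_i(\Delta s_k)_i$ and $z(\alpha)^{T}s(\alpha)=(1-\alpha)z_k^{T}s_k+\alpha m\sigma_k\rho_k+\alpha^2\Delta z_k^{T}\Delta s_k$, so $f_k^{I}$ is a quadratic in $\alpha$ whose coefficients are built from the bounded data of the previous paragraph. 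Hence the Euclidean argument of \cite[Lemma 6.3]{el1996formulation} (see also \cite{bonettini2005inexact,durazzi2000newton}) applies verbatim and yields $\inf_k\alpha_k^{I}>0$.

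For $\alpha_k^{II}$, the term $\|F(w(\alpha))\|$ in $f_k^{II}(\alpha)=z(\alpha)^{T}s(\alpha)-\gamma_k\tau_2\|F(w(\alpha))\|$, with $w(\alpha)=\bar{\mathrm{R}}_{w_k}(\alpha\Delta w_k)$, is the only genuinely Riemannian ingredient, and here I would use the hypothesis $\mathrm{R}=\operatorname{Exp}$: since the product metric makes geodesics on $\mathcal{N}=\mathcal{M}\times\mathbb{R}^{l}\times\mathbb{R}^{m}\times\mathbb{R}^{m}$ split factor-wise, $\bar{\mathrm{R}}$ is the exponential map of $\mathcal{N}$, so Lemma \ref{lem:R1est} (equivalently Lemma \ref{lem:FundamentalTheoremOfCalculusRiemannian} with $c_1=0$) applies at the base point $w_k$ with increment $\alpha\Delta w_k$ and gives $\|\mathrm{P}_{\gamma}^{0\to 1}[F(w(\alpha))]-F(w_k)-\alpha\nabla F(w_k)[\Delta w_k]\|\le c\,d^2(w_k,w(\alpha))\le c\,a_1^2\alpha^2\|\Delta w_k\|^2$, where $\gamma(t)=\bar{\mathrm{R}}_{w_k}(t\alpha\Delta w_k)$. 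Since parallel transport is isometric, substituting $\nabla F(w_k)[\Delta w_k]=-F(w_k)+\sigma_k\rho_k\hat{e}$ from (\ref{eq:NT}) and using $\|\hat{e}\|=\sqrt{m}$ together with $\rho_k\le\|F(w_k)\|/\sqrt{m}$ yields, for $\alpha\in(0,1]$, $\|F(w(\alpha))\|\le[1-\alpha(1-\sigma_k)]\|F(w_k)\|+c\,a_1^2\alpha^2\|\Delta w_k\|^2$ — exactly the bound on $\|F\|$ along the Newton step that the Euclidean proof uses. Combining this with the quadratic formula for $z(\alpha)^{T}s(\alpha)$, the centrality relation guaranteeing $f_k^{II}(0)>0$, and $\{\sigma_k\}$ bounded away from zero, the remaining manipulations — including the bookkeeping on the parameters $\gamma_k$ — are purely algebraic and identical to \cite[Lemma 6.3]{el1996formulation}; they give $\inf_k\alpha_k^{II}>0$, and hence $\inf_k\bar{\alpha}_k>0$.

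The hard part will be the uniformity of the constants. The estimate above was written as if $c$ (from Lemma \ref{lem:R1est}), $a_1$ (from Lemma \ref{lem:distR1}), and the Lipschitz constant of $w\mapsto\nabla F(w)$ were global, whereas Lemmas \ref{lem:distR1}, \ref{lem:R1est} and \ref{lem:LipCountNabulaF} only furnish constants valid in a neighborhood of a given point. The remedy is Proposition \ref{boundedness}(c): $\{w_k\}$ is bounded, so its closure is compact, and one may cover it by finitely many such neighborhoods and take the largest constant. One must also check that $w(\alpha)=\bar{\mathrm{R}}_{w_k}(\alpha\Delta w_k)$ lies in a retractive neighborhood of $w_k$ for $\alpha$ in the range considered, so that $\bar{\mathrm{R}}^{-1}$ and Lemma \ref{lem:R1est} apply — this follows from the boundedness of $\{\Delta w_k\}$ once attention is restricted to $\alpha$ below a uniform threshold. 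These uniformization steps, rather than any new inequality, are where the Riemannian proof requires care beyond the Euclidean one; once they are in place, the centrality-parameter bookkeeping is unchanged from \cite{el1996formulation}.
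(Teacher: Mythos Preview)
Your proposal is correct and follows essentially the same route as the paper: $\alpha_k^{I}$ is handled verbatim from \cite{el1996formulation,durazzi2000newton} since $f_k^{I}$ depends only on the affine Euclidean components, and for $\alpha_k^{II}$ the only Riemannian novelty is the bound on $\|F(w(\alpha))\|$ along the geodesic step, after which the algebra is identical to the Euclidean case.

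One simplification worth noting: the paper obtains the key estimate directly from Lemma~\ref{lem:FundamentalTheoremOfCalculusRiemannian} (with $c_1=0$ since $\mathrm{R}=\operatorname{Exp}$) together with the \emph{global} Lipschitz hypothesis on $w\mapsto\nabla F(w)$ in \ref{C1}, yielding
\[
\|F(w(\alpha))\|\le(1-\alpha)\|F(w)\|+\alpha\sigma\rho\sqrt{m}+\tfrac{\kappa}{2}\alpha^{2}\|\Delta w\|^{2}
\]
with a single constant $\kappa$ valid on all of $\Omega(0)$. Because \ref{C1} already supplies this Lipschitz constant globally, the compactness-based uniformization you carry out for the constants of Lemmas~\ref{lem:distR1} and~\ref{lem:R1est} is not needed here; invoking Lemma~\ref{lem:R1est} (whose constant is only local) and then patching via a finite cover is a correct but unnecessarily indirect path to the same inequality. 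In short, your ``hard part'' is in fact already paid for by \ref{C1}.
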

\begin{proof} 
Since $\bar{\alpha}_{k}=\min \{\alpha_{k}^{I}, \alpha_{k}^{II}\}$, it is sufficient to show that $\{\alpha_{k}^{I}\}$ and $\{\alpha_{k}^{II}\}$ are bounded away from zero. 
For $\alpha_{k}^{I}$, see \cite[Lemma 6.3]{el1996formulation} and/or \cite[Theorem 3.1]{durazzi2000newton}. The proofs in those references apply verbatim to the Riemannian case. On the other hand, for $\alpha_{k}^{II}$, we need to adapt the proofs in \cite{el1996formulation,durazzi2000newton}, since Lipschitz continuity on manifolds is more complicated, see Subsection \ref{subsec:LipschitzContinuity}.

Let us suppress the subscript $k$.
Recall that $ w(\alpha)=\bar{\operatorname{Exp}}_{w}(\alpha \Delta w). $ Fix $ \alpha \Delta w $ and let $\mathrm{P}_{\gamma}$ be the parallel transport along the geodesic $\gamma(t)=\bar{\operatorname{Exp}}_{w}\left(t \alpha \Delta w\right)$. By Lemma \ref{lem:FundamentalTheoremOfCalculusRiemannian} where $c_1=0$, we obtain
\begin{align*}
	\mathrm{P}_{\gamma}^{0 \to 1} [F(w(\alpha))]
	&= F(w) +\alpha \nabla F(w) [\Delta w] -\alpha \nabla F(w) [\Delta w] +\int_{0}^{1} \mathrm{P}_{\gamma}^{0 \to t} \nabla F(\gamma(t)) \mathrm{P}_{\gamma}^{t \to 0} [\alpha \Delta w] \mathrm{d} t \\
	&= F(w) 
	+
	\alpha \left( \sigma \rho \hat{e}-F(w)\right) 
	+
	\alpha \int_{0}^{1} \left( \mathrm{P}_{\gamma}^{0 \to t} \nabla F(\gamma(t)) \mathrm{P}_{\gamma}^{t \to 0} 
	-
	\nabla F(w)\right) [\Delta w] \mathrm{d} t \\
	&= (1-\alpha)F(w) +\alpha \sigma \rho \hat{e}
	+\alpha \int_{0}^{1} \left(\mathrm{P}_{\gamma}^{0 \to t} \nabla F(\gamma(t)) \mathrm{P}_{\gamma}^{t \to 0} -\nabla F(w)\right) [\Delta w] \mathrm{d} t.
\end{align*}
Taking the norm on both sides above gives
\begin{align*}
	\left\| F(w(\alpha))\right\| 
	&=\left\| \mathrm{P}_{\gamma}^{0 \to 1} [F(w(\alpha))]\right\|\text{ (since parallel transport (\ref{eq:ParallelTransport}) is isometric)}\\
	&\leq (1-\alpha) \left\| F(w) \right\|+ \alpha \sigma \rho \left\|\hat{e}\right\| + 
	\alpha \int_{0}^{1} \left\|  \mathrm{P}_{\gamma}^{0 \to t} \nabla F(\gamma(t)) \mathrm{P}_{\gamma}^{t \to 0} -\nabla F(w)\right\|  \left\| \Delta w \right\| \mathrm{d} t \\
%	&\leq (1-\alpha) \left\| F(w) \right\|+ 
%	\alpha \sigma \rho \sqrt{m} 
%	+ 
%	\alpha \int_{0}^{1} \kappa_1 d(w, \gamma(t)) \left\| \Delta w \right\| \mathrm{d} t \text{ (by Lipschitz continuity of $ \nabla F $)}\\
	&\leq (1-\alpha) \left\| F(w) \right\|+ \alpha \sigma \rho \sqrt{m} 
	+ 
	\alpha \int_{0}^{1} \kappa_2 \|t\alpha \Delta w \|  \left\| \Delta w \right\| \mathrm{d} t \\
	&= (1-\alpha) \left\| F(w) \right\|+ \alpha \sigma \rho \sqrt{m} + 
	\alpha^{2} \| \Delta w\|^{2} \kappa /2 . \label{86}
\end{align*}
The rest of the proof is the same as \cite[Lemma 6.3]{el1996formulation} and/or \cite[Theorem 3.1]{durazzi2000newton}, so we omit it.
\end{proof}

%We end this section with the following proof of Theorem \ref{thm:global}.

\vspace{2mm}
%\noindent {\it \bf Proof of Theorem \ref{thm:global}:} 
\begin{proof}[Proof of Theorem \ref{thm:global}:]
By Proposition \ref{prop:monotone}, we know that $\left\{\left\|F(w_{k})\right\|\right\}$ is monotonically nonincreasing, hence convergent. Assume that $\left\{\left\|F(w_{k})\right\|\right\}$ does not converge to zero.
Then, there exists $\epsilon>0$ such that
$\left\{w_{k} \right\} \subset \Omega(\epsilon)$ for infinitely many $k$.
We will show that the following two cases both lead to contradictions and thus, the hypothesis $\left\| F(w_{k}) \right\| \nrightarrow 0$ is not valid.

\textbf{Case 1.} 
For infinitely many $k$, if step (3b) in Algorithm \ref{algo:GlobalRIPM} is executed with $\alpha_{k} \equiv \bar{\alpha}_{k}$, it follows from Proposition \ref{prop:monotone} that
$
\varphi(w_{k+1}) / \varphi(w_{k}) \leq \lambda_{k}:= \left[1-2 \bar{\alpha}_{k} \beta\left(1-\sigma_{k}\right)\right].
$
Since $\left\{\bar{\alpha}_{k}\right\}$ is bounded away from zero by Lemma \ref{lem:Step length} and $\left\{\sigma_{k}\right\}$ is bounded away from one, then $\left\{\lambda_{k}\right\}$ is bounded away from one and hence, $\varphi(w_{k}) \to 0$; this is a contradiction.

\textbf{Case 2.} 
On the other hand, for infinitely many $k$, if $\alpha_{k}<\bar{\alpha}_{k}$, we have that $\alpha_{k} \leq \theta \bar{\alpha}_{k}$. Then, condition (\ref{eq:ArmijoCondition}) fails to hold for an $\tilde{\alpha}_{k}$ with $\alpha_{k}<\tilde{\alpha}_{k} \leq \alpha_{k} /\theta = \theta^{t-1} \bar{\alpha}_{k}$. Notice that $ \alpha_{k} /\theta $ is the value corresponding to the last failure.
Recall that the derivative of the real-valued function $\alpha \mapsto \varphi(\alpha):=\varphi\left(\bar{\operatorname{Exp}}_{w_{k}}(\alpha \Delta w_{k})\right)$ is
\begin{equation}\label{eq:241}
	\varphi^{\prime}(\alpha)=
	\mathcal{D} 
	\varphi (\bar{\operatorname{Exp}}_{w_{k}}\left(\alpha \Delta w_{k}\right))
	[\mathcal{D} \bar{\operatorname{Exp}}_{w_{k}}
	\left(\alpha \Delta w_{k}\right)\left[\Delta w_{k}\right]].
\end{equation}
Applying the mean value theorem to $ \varphi(\alpha) $ on the interval $ [0,\tilde{\alpha}_{k}] $ yields a number
$
\xi \in(0,1)$ such that $\tilde{\alpha}_{k} \varphi^{\prime}(\xi \tilde{\alpha}_{k})
=\varphi(\tilde{\alpha}_{k})-\varphi(0) .
$
For short, let $u:=\xi \tilde{\alpha}_{k} \Delta w_{k}.$
Hence,
\begin{equation}\label{eq:74}
	\begin{aligned}
		& \tilde{\alpha}_{k} \beta\left\langle\operatorname{grad} \varphi_{k}, \Delta w_{k}\right\rangle \\
		& <
		\varphi(\tilde{\alpha}_{k})-\varphi(0)  \text{ (as condition (\ref{eq:ArmijoCondition}) fails for $ \tilde{\alpha}_{k} $)} \\
		& =
		\tilde{\alpha}_{k} \varphi^{\prime}(\xi \tilde{\alpha}_{k}) \\ 
		&=
		\tilde{\alpha}_{k}
		\mathcal{D} 
		\varphi (\bar{\operatorname{Exp}}_{w_{k}}(u))
		\left[\mathcal{D} \bar{\operatorname{Exp}}_{w_{k}}
		(u)\left[\Delta w_{k}\right]\right] \text{ (by equation (\ref{eq:241}))}  \\
		&= 
		\tilde{\alpha}_{k}
		\langle 
		\operatorname{grad}
		\varphi (\bar{\operatorname{Exp}}_{w_{k}}(u)),
		\mathcal{D} \bar{\operatorname{Exp}}_{w_{k}}
		(u)\left[\Delta w_{k}\right]
		\rangle.
	\end{aligned}
\end{equation}
On the other hand, note that
\begin{equation}\label{78}
	\begin{aligned}
		&\left\langle\operatorname{grad} \varphi_{k}, \Delta w_{k}\right\rangle \\
		&=
		\langle \operatorname{grad} \varphi_{k}, u\rangle / \xi \tilde{\alpha}_{k}  \\
		&=
		\langle \mathcal{D} \bar{\operatorname{Exp}}_{w_{k}}
		(u)\left[\operatorname{grad} \varphi_{k}\right], 
		\mathcal{D} \bar{\operatorname{Exp}}_{w_{k}}
		(u)\left[u\right]
		\rangle / \xi \tilde{\alpha}_{k} \text{ (by Lemma \ref{lem:gauss})} \\
		&=
		\langle \mathcal{D} \bar{\operatorname{Exp}}_{w_{k}}
		(u)\left[\operatorname{grad} \varphi_{k}\right], 
		\mathcal{D} \bar{\operatorname{Exp}}_{w_{k}}
		(u)\left[\Delta w_{k}\right]
		\rangle. 
	\end{aligned}
\end{equation}
Subtracting $\tilde{\alpha}_{k} \left\langle\operatorname{grad} \varphi_{k}, \Delta w_{k}\right\rangle $ from both sides of (\ref{eq:74}) and using equalities (\ref{78}) gives
\begin{align*}
	& \tilde{\alpha}_{k}(\beta-1) 
	\left\langle\operatorname{grad} \varphi_{k}, \Delta w_{k}\right\rangle \\
	&< \tilde{\alpha}_{k}\left[ 
	\langle 
	\operatorname{grad}
	\varphi (\bar{\operatorname{Exp}}_{w_{k}}(u)),
	\mathcal{D} \bar{\operatorname{Exp}}_{w_{k}}
	(u)\left[\Delta w_{k}\right]
	\rangle
	-\langle\operatorname{grad} \varphi_{k}, \Delta w_{k}\rangle \right] \\
	&= \tilde{\alpha}_{k} 
	\langle 
	\operatorname{grad}
	\varphi (\bar{\operatorname{Exp}}_{w_{k}}(u))-\mathcal{D} \bar{\operatorname{Exp}}_{w_{k}}
	(u)\left[\operatorname{grad} \varphi_{k}\right],
	\mathcal{D} \bar{\operatorname{Exp}}_{w_{k}}
	(u)\left[\Delta w_{k}\right]
	\rangle \\
	&\leq \tilde{\alpha}_{k} 
	\left\|\operatorname{grad}
	\varphi (\bar{\operatorname{Exp}}_{w_{k}}(u))-\mathcal{D} \bar{\operatorname{Exp}}_{w_{k}}
	(u)\left[\operatorname{grad} \varphi_{k}\right]\right\|
	\left\|	\mathcal{D} \bar{\operatorname{Exp}}_{w_{k}}
	(u)\left[\Delta w_{k}\right]\right\| \\
	&= \tilde{\alpha}_{k} 
	\left\|\operatorname{grad}
	\varphi (y)-\mathcal{D} \bar{\operatorname{Exp}}_{w_{k}}
	(u)\left[\operatorname{grad} \varphi (w_{k}) \right]\right\|
	\left\|	\mathcal{D} \bar{\operatorname{Exp}}_{w_{k}}
	(u)\left[\Delta w_{k}\right]\right\| \\ 
	& \qquad \qquad \text{ (by letting $y:=\bar{\operatorname{Exp}}_{w_{k}}(u)$)}\\
	&\leq \tilde{\alpha}_{k} 
	\kappa\left\|u\right\| \left\|\Delta w_{k}\right\| \text{ (by (\ref{eq:DifferentiatedRetraction}), Definition \ref{defn:LipschitzContinuouslyDifferentiable} and Lemma \ref{lem:gauss})} \\
	&=  \kappa \xi \tilde{\alpha}_{k}^{2}  \left\|\Delta w_{k}\right\|^{2}.
\end{align*}
Finally, we obtain
$
(\beta-1) 
\left\langle\operatorname{grad} \varphi_{k}, \Delta w_{k}\right\rangle/(\kappa \xi \left\|\Delta w_{k}\right\|^{2})
< \tilde{\alpha}_{k}.
$

Because $ \left\langle\operatorname{grad} \varphi_{k}, \Delta w_{k}\right\rangle <0 $ and $\alpha_{k}$ satisfies (\ref{eq:ArmijoCondition}), we have
\begin{align*}
	\varphi_{k}(0)-\varphi_{k}(\alpha_{k})
	& \geq -\alpha_{k} \beta\left\langle\operatorname{grad} \varphi_{k}, \Delta w_{k}\right\rangle \\
	& \geq -\theta \beta \tilde{\alpha}_{k} \left\langle\operatorname{grad} \varphi_{k}, \Delta w_{k}\right\rangle \\
	& \geq -\theta \beta \left\langle\operatorname{grad} \varphi_{k}, \Delta w_{k}\right\rangle (\beta-1) 
	\left\langle\operatorname{grad} \varphi_{k}, \Delta w_{k}\right\rangle/(\kappa \xi \left\|\Delta w_{k}\right\|^{2})\\
	&\geq [\theta \beta (1-\beta) /\kappa \xi]
	\left( \left\langle\operatorname{grad} \varphi_{k}, \Delta w_{k}\right\rangle / \left\|\Delta w_{k}\right\|\right)^{2} \\
	&=\omega
	\left( \left\langle\operatorname{grad} \varphi_{k}, \Delta w_{k}\right\rangle / \left\|\Delta w_{k}\right\|\right),
\end{align*}
where $\omega(\cdot)$ is an $F$-function (see \cite[Definition 14.2.1 \& 14.2.2 in P479]{ortega1970iterative}).
Since $\{\varphi_{k}\}$ is bounded below and $\varphi_{k} \geq \varphi_{k+1}$, it follows that $\lim _{k \to \infty}(\varphi_{k}-\varphi_{k+1})=0$. By the definition of $ F $-functions, we obtain
$
\left\langle\operatorname{grad} \varphi_{k}, \Delta w_{k}\right\rangle / \left\|\Delta w_{k}\right\| \to 0.
$
Since $\{\left\|\Delta w_{k}\right\|\}$ is bounded (Proposition \ref{boundedness}), we have
$
\left\langle\operatorname{grad} \varphi_{k}, \Delta w_{k}\right\rangle \to 0.
$
Choosing $ \rho_{k} $ with $z_{k}^{T} s_{k}/m \leq \rho_{k} \leq \left\|F(w_{k})\right\| / \sqrt{m}$ in Algorithm \ref{algo:GlobalRIPM} implies that
\begin{align*}
	\left\langle\operatorname{grad} \varphi_{k}, \Delta w_{k}\right\rangle /(-2)
	&=
	\varphi_{k}-\sigma_{k} \rho_{k} z_{k}^{T} s_{k}
	\geq \varphi_{k}-\sigma_{k}\left\|F(w_{k})\right\| z_{k}^{T} s_{k} / \sqrt{m} \\
	&\geq  \varphi_{k}-\sigma_{k}\left\|F(w_{k})\right\|^{2}
	\geq  (1-\sigma_{k})\varphi_{k}.
\end{align*}
This shows that $\varphi (w_{k}) \to 0$, because $\left\{\sigma_{k}\right\}$ is bounded away from one; this is a contradiction.
\end{proof}

\section{Numerical Experiments}\label{sec:ne}

The numerical experiments compared the performance of the globally convergent RIPM (Algorithm \ref{algo:GlobalRIPM}) with those of other Riemannian methods in solving two problems. They were conducted in Matlab R2022a on a computer with an Intel Core i7-10700 (2.90GHz) and 16GB RAM. Algorithm \ref{algo:GlobalRIPM} utilized Manopt 7.0 \cite{Boumal2014}, a Riemannian optimization toolbox for Matlab. 
The problems involve three manifolds:
\begin{itemize}
	\item fixed-rank manifold, $\mathcal{M}_r =\left\{X \in \mathbb{R}^{m \times n}: \operatorname{rank}(X)=r\right\}$; 
	\item Stiefel manifold, $\operatorname{St}(n, k) =\left\{X \in \mathbb{R}^{n \times k}: X^T X=I_k\right\}$;
	\item oblique manifold, $\mathrm{Ob}(n, k) =\{X \in \mathbb{R}^{n \times k}: \left(X^T X\right)_{i i}=1, \forall i=1,\dots,k \}.$
%	\text{each column of $X$ has unit Euclidean norm}$.
\end{itemize}
We only consider their embedded geometry
and we apply the default retractions in Manopt, e.g., the retraction based on QR decomposition for the Stiefel manifold.

\textbf{Problem I.} 
Recently, \cite{Song2020} proposed the Nonnegative Low-Rank Matrix (NLRM) approximation. Formally, NLRM aims to solve
\begin{equation}\label{pro:nlrm} \tag{NLRM}
	\min _{X \in \mathcal{M}_r}\|A-X\|_{\mathrm{F}}^2 \quad \text { s.t. } X \geq 0.
\end{equation}
\emph{Input.} 
We tested three cases of integer triples $(m, n, r)$: $(20, 16, 2)$, $(30, 24, 3)$, and $(40, 32, 4)$.
%$ m = 20,30,40 $ and let $ n= 0.8m ,r= 0.1m$. 
For each $(m,n,r)$, we generated nonnegative $L \in \mathbb{R}^{m \times r}$, $R \in \mathbb{R}^{r \times n}$ whose entries follow a uniform distribution in [0,1]. Then, we added the Gaussian noise with zero mean and different standard deviation $(\sigma = 0, 0.001, 0.01)$ to $ A=LR$.  
When there is no noise (i.e., $\sigma = 0$), the input data matrix $A$ itself is exactly a solution.

\textbf{Problem II.} 
Given $C \in \mathbb{R}^{n \times k}$, \cite{Jiang2022} computed its projection onto the nonnegative part of the Stiefel manifold. If the distance is measured in terms of $\|C-X\|_{\mathrm{F}}^2$, we can express it equivalently as
\begin{equation}\label{pro:nonStProj} \tag{Model\_St}
	\min _{X \in \operatorname{St}(n, k)} - 2 \operatorname{trace}(X^{T}C)  \quad \text { s.t. } X \geq 0.
\end{equation}
In \cite{Jiang2022}, it is shown that (\ref{pro:nonStProj}) can be equivalently reformulated into
\begin{equation}\label{pro:nonStProj_ob} \tag{Model\_Ob}
	\min _{X \in \operatorname{Ob}(n, k)} - 2 \operatorname{trace}(X^{T}C)  \quad \text { s.t. } X \geq 0, \; \|X V\|_{\mathrm{F}}=1,
\end{equation}
where the positive integer $p$ and $V \in \mathbb{R}^{k \times p}$ can be arbitrary as long as $\|V\|_{\mathrm{F}}=1$ and $V V^T$ is entry-wise positive. We examined both models. 
\emph{Input.} 
We considered four cases of integer pairs $(n,k)$: $(40,8),(50,10),(60,16),$ and $(70,14).$
%We considered four cases $ n = 40,50,60,70 $ and let $ k= 0.2n$. 
For a general $C$, it is always difficult to seek nonnegative projections globally. Fortunately, Proposition 1 in \cite{Jiang2022} showed a way to construct $ C $ such that \ref{pro:nonStProj} has a unique, known solution $ X^{*} $. In our experiments, we generated a feasible point $ B $ of \ref{pro:nonStProj}; then, we set $C$ by codes:
\texttt{X1=(B>0).*(1+rand(n,k));Xstar=X1./sqrt(sum(X1.*X1));L=rand(k,k);\\L=L+k*eye(k);C=Xstar*L'.}
The initial point was computed by projecting $ C $ onto the Stiefel manifold. 
In addition, for \ref{pro:nonStProj_ob} we set \texttt{p=1;V=ones(k,p);V=V/norm(V,"fro").}

\subsection{Implementation Details}

The experimental implementation of Algorithm \ref{algo:GlobalRIPM} (i.e.,RIPM) initialized $z_0$ and $s_0$ from a uniform distribution in [0,1] and set $y_0=0$ if equality constraints exist. 
We used $\rho_{k}=z_{k}^{T}s_{k}/m$, $\sigma_k=\min \{0.5, \| F(w_{k})\| ^{1/2}\}$ and Algorithm \ref{algo:ConjugateResidualTangentSpace} to solve the condensed form of Newton equation (\ref{eq:CondensedEquation}). Algorithm \ref{algo:ConjugateResidualTangentSpace} stopped when the relative residual went below $10^{-9}$, or it reached 1000 iterations.
We used a backtracking line search simultaneously for the central conditions and sufficient decreasing conditions. 
We set $\gamma_{-1}=0.9$, $\gamma_{k+1}=(\gamma_{k}+0.5)/2;$ and $\beta=10^{-4}$, $\theta=0.5$.
We compared RIPM with the following Riemannian methods:
\begin{itemize}
	\item RALM: Riemannian augmented Lagrangian method \cite{liu2020simple}.
	\item REPM$_{\text{lqh}}$: Riemannian exact penalty method with smoothing functions of linear-quadratic and pseudo-Huber \cite{liu2020simple}.
	\item REPM$_{\text{lse}}$: Riemannian exact penalty method with smoothing functions of log-sum-exp \cite{liu2020simple}.
	\item RSQP: Riemannian sequential quadratic programming \cite{Obara2022}.
\end{itemize}
Let $[t]_{+}:=\max(0,t)$ and $ [t]_{-}:=\min(0,t)$ for any $t \in \mathbb{R}.$
The experimental settings followed those of \cite{Obara2022}, where they used the KKT residual defined as
\begin{equation*}
	\sqrt{ \left\| \operatorname{grad}_{x}\mathcal{L}(w) \right\|^2 +\sum_{i =1}^{m} \{  [z_{i}]_{-}^2 + [g_i(x)]_{+}^2 +\left|z_i g_i(x)\right|^2\} +\sum_{i =1}^{l} \left|h_i(x)\right|^2 }
\end{equation*}
to measure the deviation of an iterate from the KKT conditions.
For the parameters of RALM, REPMs, and RSQP, we utilized the experimental setting and Matlab codes provided by \cite{Obara2022}.

We conducted 20 random trials of each problem and model. 
All the algorithms ran with the same initial point.
The experiment is considered successfully terminated if it finds a solution with a KKT residual lower than $\epsilon_{\text{kkt}}$ before triggering any of the stopping conditions.
For the \emph{first-order} algorithms (including RALM and the REPMs), the stopping conditions are: elapsed time exceeding $ t_{\max} $ seconds, number of outer iterations exceeding 1,000, or failure of the algorithm to update any parameters.
For the \emph{second-order} algorithms (including RSQP and RIPM), the stopping conditions are elapsed time exceeding $ t_{\max} $ seconds or a number of outer iterations exceeding 10,000.
Here, considering that some problems might not have converged easily, the maximum number of iterations was chosen to be 1,000 (10,000), which was a sufficiently large value.
The selection of $t_{\max}$ is related to the actual time it took to run all the codes on the computer. Setting $t_{\max}$ too large resulted in excessive time spent on poorly performing algorithms. 
On the other hand, $\epsilon_{\mathrm{kkt}}$ was chosen to better demonstrate that second-order algorithms could achieve more accurate solutions. 
Therefore, we chose the appropriate values for $ t_{\max} $ and $\epsilon_{\text{kkt}}$ according to the problem that was to be solved.

\subsection{Results and Analysis}

The tables in this subsection report the success rate (\textbf{Success}), the average time (\textbf{Time}), and the average iteration number (\textbf{Iter.}) among the successful trials.
In order to capture the combination of stability and speed, the algorithms with 0.9 or higher are first highlighted in bold in the ``Success'' column, and then, of those algorithms, the fastest result is highlighted in bold in the ``Time'' column.
Here, ``successful convergence in numerical experiments'' means that the algorithm can generate a relatively accurate solution in a relatively reasonable amount of time. It is not exactly the same as the ``theoretical convergence in any global convergence theorem''. 
Numerical experiments reflect the actual performance of the algorithm in the application. 
For example, the first-order algorithms (RALM, REPMs) also have theoretical global convergence, but under our high criterion, it is difficult for them to generate a high-precision solution within a certain period of time.
The second-order algorithm (RIPM, RSQP), on the other hand, is excellent in terms of accuracy, although it takes quite a bit of time.

\textbf{Problem I.} 
Here, we set $ t_{\max} = 180$, $\epsilon_{\text{kkt}} = 10^{-8}$.
The numerical results in Table \ref{table:nlrm} show that RIPM had the best performance, except for cases (30, 40, 3) and (40, 32, 4) without noise.
The time spent by RALM and the REPMs grew slowly with the problem size $m$ and $n$, but their success rates dropped sharply as the noise level (standard deviation $\sigma$) intensified, eventually leading to non-convergence.
In contrast, RSQP and RIPM were more stable and robust, while RIPM was much faster than RSQP.
The cost of RSQP increased drastically with the problem size because a \emph{quadratic programming subproblem} (defined over tangent space of current point) had to be solved in each iteration. 
Unlike RIPM, which uses Krylov subspace methods introduced in Subsection \ref{ssec:Krylov} to avoid expensive computations, RSQP had to transform the subproblem into a matrix representation (similar to Step 1-6 in Subsection \ref{ssec:Krylov}).
As can be seen from Table \ref{table:nlrm}, RIPM took about the same amount of time as RALM and the REPMs did.

\begin{table}%[t]
	\caption{Performance of various Riemannian methods on problem I (\ref{pro:nlrm}).}
	\centering
	\label{table:nlrm}
	\begin{tabular}{l|lll|lll|lll}
		\hline
		$(m,n,r)$ & \multicolumn{3}{c}{(20,16,2)} & \multicolumn{3}{c}{(30,24,3)} & \multicolumn{3}{c}{(40,32,4)} \\ 
		\hline
		no noise  & Success & Time [s] & Iter. & Success & Time [s] & Iter. & Success & Time [s] & Iter. \\ 
		\hline
		RALM                           & 0.4           & 1.115                          & 31    & 0.65          & {1.813}                		& 31    & 0.75          & {2.800}                		& 31    \\
		REPM$_{\text{lqh}}$            & \textbf{1}    & {5.165$\times 10^{-1}$} 		& 31    & \textbf{1}    & \textbf{1.009}                & 31    & \textbf{1}    & \textbf{1.747}                & 31    \\
		REPM$_{\text{lse}}$            & \textbf{1}    & 2.242                          & 31    & \textbf{1}    & 4.041                         & 31    & \textbf{0.95} & 6.952                         & 31    \\
		RSQP                           & \textbf{0.9}  & 6.429                          & 7     & \textbf{0.9}  & 3.944$\times 10$              & 8     & \textbf{0.9}  & 1.254$\times 10^{2}$          & 8     \\
		RIPM                           & \textbf{1}    & $\mathbf{4.920\times 10^{-1}}$ & 19    & \textbf{1}    & 2.247                         & 27    & \textbf{1}    & 5.277                         & 32    \\ 
		\hline \hline
		$(m,n,r)$ & \multicolumn{3}{c}{(20,16,2)} & \multicolumn{3}{c}{(30,24,3)} & \multicolumn{3}{c}{(40,32,4)} \\ 
		\hline
		$\sigma=0.001$ & Success & Time [s] & Iter. & Success & Time [s] & Iter. & Success & Time [s] & Iter. \\ 
		\hline
		RALM                           & 0.2           & 1.001                          & 31    & 0.15          & 2.050                         & 31    & 0.05          & 2.758                         & 31    \\
		REPM$_{\text{lqh}}$            & 0.1           & 4.983$\times 10^{-1}$          & 32    & 0.25          & 1.035                         & 31    & 0.15          & 1.787                         & 31    \\
		REPM$_{\text{lse}}$            & 0.15          & 2.444                          & 31    & 0.1           & 4.867                         & 31    & 0.05          & 8.371                         & 31    \\
		RSQP                           & \textbf{0.95} & {6.619}                 		& 7     & \textbf{0.95} & {3.848$\times 10$}     		& 8     & \textbf{0.9}  & 1.299$\times 10^{2}$          & 8     \\
		RIPM                           & \textbf{1}    & $\mathbf{5.376\times 10^{-1}}$ & 20    & \textbf{1}    & \textbf{2.342}                & 27    & \textbf{1}    & \textbf{4.631}                & 29    \\ 
		\hline \hline
		$(m,n,r)$ & \multicolumn{3}{c}{(20,16,2)} & \multicolumn{3}{c}{(30,24,3)} & \multicolumn{3}{c}{(40,32,4)} \\ 
		\hline
		$\sigma=0.01$ & Success & Time [s] & Iter. & Success & Time [s] & Iter. & Success & Time [s] & Iter. \\ 
		\hline
		RALM                           & 0             & -                              & -     & 0             & -                             & -     & 0             & -                             & -     \\
		REPM$_{\text{lqh}}$            & 0             & -                              & -     & 0             & -                             & -     & 0             & -                             & -     \\
		REPM$_{\text{lse}}$            & 0             & -                              & -     & 0             & -                             & -     & 0             & -                             & -     \\
		RSQP                           & \textbf{1}    & {7.295}                 		& 8     & \textbf{0.95} & {4.114$\times 10$}            & 8     & \textbf{0.95} & {1.430$\times 10^{2}$} 		& 9     \\
		RIPM                           & \textbf{1}    & \textbf{5.980$\times 10^{-1}$} & 21    & \textbf{0.95} & \textbf{1.883}                & 25    & \textbf{0.95} & \textbf{4.602}                & 29    \\ 
		\hline
	\end{tabular}
\end{table}

\textbf{Problem II.}
Here, we set $ t_{\max} = 600$, $\epsilon_{\text{kkt}} = 10^{-6}$ for both (\ref{pro:nonStProj}) and (\ref{pro:nonStProj_ob}). Since the true solution $X^{*}$ is known, we added a column showing the average error $\|\tilde{X} - X^{*} \|_{\mathrm{F}}$, where $ \tilde{X} $ denotes the final iterate.
The numerical results are listed in Table \ref{table:nonStProj} and \ref{table:nonStProj_ob}. The Error columns show that if the KKT residual is sufficiently small, then $ \tilde{X} $ does approximate the true solution. In particular, RSQP and RIPM yield a more accurate solution (the error is less than $10^{-7} $). From Table \ref{table:nonStProj}, we can see that RALM is stable and fast for (\ref{pro:nonStProj}). However, from Table \ref{table:nonStProj_ob}, the success rate of RALM for (\ref{pro:nonStProj_ob}) decreases as the problem size becomes larger. The REPMs do not work at all on either model. 
RSQP also does not perform well on either model.
In contrast, RIPM successfully solved all instances of both models and was only slightly slower than RALM in some cases. Overall, our RIPM was relatively fast and most stable.
%In contrast, RIPM successfully solved all instances of both models. Overall, our RIPM was fast and the most stable.

\begin{table}%[t]
	\caption{Performance of various Riemannian methods on problem II (\ref{pro:nonStProj}).}
	\centering
	\label{table:nonStProj}
	\begin{tabular}{l|llll|llll}
		\hline
		$(n,k)$   & \multicolumn{4}{c}{(40,8)}          & \multicolumn{4}{c}{(50,10)}         \\ \hline
		& Success & Time [s]  & Iter. & Error    & Success & Time [s]  & Iter. & Error    \\ \hline
		RALM      & \textbf{1}    & {2.347}     & 45    & 5.41$\times 10^{-7}$ & \textbf{1}    & {4.344}     & 54    & 5.21$\times 10^{-7}$ \\ 
		REPM$_{\text{lqh}}$ & 0    & -         & -     & -        & 0    & -         & -     & -        \\
		REPM$_{\text{lse}}$ & 0    & -         & -     & -        & 0    & -         & -     & -        \\
		RSQP      & \textbf{0.9}  & 1.352$\times 10$ & 7     & 2.05$\times 10^{-9}$ & 0.7  & 3.097$\times 10$ & 6     & 2.47$\times 10^{-9}$ \\
		RIPM      & \textbf{1}    & \textbf{2.225}     & 31    & 3.72$\times 10^{-8}$ & \textbf{1}    & \textbf{3.785}     & 32    & 3.38$\times 10^{-8}$ \\ \hline \hline
		$(n,k)$   & \multicolumn{4}{c}{(60,12)}         & \multicolumn{4}{c}{(70,14)}         \\ \hline
		& Success & Time [s]  & Iter. & Error    & Success & Time [s]  & Iter. & Error    \\ \hline
		RALM      & \textbf{1}    & \textbf{4.097}     & 34    & 4.93$\times 10^{-7}$ & \textbf{1}    & \textbf{6.234}     & 37    & 5.34$\times 10^{-7}$ \\
		REPM$_{\text{lqh}}$ & 0    & -         & -     & -        & 0    & -         & -     & -        \\
		REPM$_{\text{lse}}$ & 0    & -         & -     & -        & 0    & -         & -     & -        \\
		RSQP      & 0.65 & 7.802$\times 10$ & 7     & 6.48$\times 10^{-9}$ & 0.85 & 1.661$\times 10^{2}$ & 7     & 2.64$\times 10^{-9}$ \\
		RIPM      & \textbf{1}    & {5.555}     & 32    & 2.81$\times 10^{-8}$ & \textbf{1}    & {7.574}     & 33    & 2.45$\times 10^{-8}$ \\ \hline
	\end{tabular}
\end{table}

\begin{table}%[t]
	\caption{Performance of various Riemannian methods on problem II (\ref{pro:nonStProj_ob}).}
	\centering
	\label{table:nonStProj_ob}
	\begin{tabular}{l|llll|llll}
		\hline
		$(n,k)$          & \multicolumn{4}{c}{(40,8)}                         & \multicolumn{4}{c}{(50,10)}                        \\ \hline
		& Success       & Time [s]           & Iter. & Error    & Success       & Time [s]           & Iter. & Error    \\ \hline
		RALM             & \textbf{1} & \textbf{2.510}     & 51    & 5.04$\times 10^{-7}$ & \textbf{0.95}       & \textbf{4.727}              & 64    & 4.94$\times 10^{-7}$ \\
		REPM$_{\text{lqh}}$        & 0          & -                  & -     & -        & 0          & -                  & -     & -        \\
		REPM$_{\text{lse}}$        & 0          & -                  & -     & -        & 0          & -                  & -     & -        \\
		RSQP             & 0.65       & 8.618              & 5     & 2.30$\times 10^{-10}$ & 0.7        & 2.782$\times 10$          & 6     & 1.12$\times 10^{-10}$ \\	
		RIPM         & \textbf{1} & {3.791}     & 22    & 5.62$\times 10^{-9}$ & \textbf{1} & {5.880}     & 23    & 7.93$\times 10^{-9}$ \\ 
		\hline \hline
		\textbf{$(n,k)$} & \multicolumn{4}{c}{(60,12)}                        & \multicolumn{4}{c}{(70,14)}                        \\ \hline
		& Success       & Time [s]           & Iter. & Error    & Success       & Time [s]           & Iter. & Error    \\ \hline
		RALM             & 0.6        & 5.725              & 49    & 3.82$\times 10^{-7}$ & 0.6        & 8.223              & 52    & 3.85$\times 10^{-7}$ \\
		REPM$_{\text{lqh}}$        & 0          & -                  & -     & -        & 0          & -                  & -     & -        \\
		REPM$_{\text{lse}}$        & 0          & -                  & -     & -        & 0          & -                  & -     & -        \\
		RSQP             & 0.7        & 4.446$\times 10$          & 5     & 1.17$\times 10^{-9}$ & 0.5        & 9.138$\times 10$          & 5     & 1.82$\times 10^{-9}$ \\	
		RIPM         & \textbf{1} & \textbf{7.134}     & 23    & 9.69$\times 10^{-9}$ & \textbf{1} & \textbf{9.268}     & 24    & 1.06$\times 10^{-8}$ \\ 
		\hline
	\end{tabular}
\end{table}

\section{Conclusions}\label{sec:Conclusion}

In this paper, we proposed a Riemannian version of the classical interior point method and established its local and global convergence. To our knowledge, this is the first study to apply the primal-dual interior point method to the nonconvex constrained optimization problem on a Riemannian manifold. 
Numerical experiments showed the stability and efficiency of our method.

Recently, Hirai et al. \cite{hirai2023interior} extended the self-concordance-based interior point methods to Riemannian
manifolds.
They aimed to minimize a \emph{geodesically convex} (i.e., convex on manifolds) objective $f \colon D \rightarrow \mathbb{R}$ defined on a geodesically convex subset $D \subset \mathcal{M}$.
In contrast, in (\ref{RCOP}), we do not require any convexity. 
In practice, many convex functions (in the Euclidean sense) are not geodesically convex on some interested manifolds.
For example, for any geodesically convex function defined on a connected, compact Riemannian manifold (e.g., Stiefel manifold), it must be constant \cite[Corollary 11.10]{boumal2022intromanifolds}, which is not of interest in the field of optimization.
Thus, (\ref{RCOP}) has a wider applicability.

In closing, let us make a comparison with the Euclidean Interior Point Method (EIPM) to illustrate the theoretical advantages of our RIPM and discuss two future directions of research on more advanced RIPM methods.
\paragraph{Comparison: Riemannian IPM (RIPM) v.s. Euclidean IPM (EIPM).}
\begin{enumerate}%[itemsep=0pt,topsep=0pt,partopsep=0pt,parsep=0pt]
	\item 
	RIPM generalizes EIPM from Euclidean space to general Riemannian manifolds. EIPM is a special case of RIPM when $\mathcal{M} = \mathbb{R}^n$ or $\mathbb{R}^{m \times n}$ in (\ref{RCOP}). 
	\item RIPM inherits all the advantages of Riemannian optimization.
	For example, we can exploit the geometric structure of $\mathcal{M}$, which is usually regarded as a set of constraints from the Euclidean viewpoint.
	\item Note that in both RIPM and EIPM, we have to solve the condensed Newton equation (\ref{eq:CondensedEquation}) at each iteration. 
	However, if the equality constraints can be considered to be a manifold, RIPM can solve (\ref{eq:CondensedEquation}) with a \emph{smaller} order on $T_x \mathcal{M} \times \mathbb{R}^l$.
	For example, the problem II (\ref{pro:nonStProj}) can be rewritten as
	\begin{equation*}
		\min _{X \in \mathbb{R}^{n \times k}} 
		- 2 \operatorname{trace}(X^{T}C)  \quad \text { s.t. } X^{T} X = I_k, \; X \geq 0.
	\end{equation*}
	Here, Stiefel manifold is replaced by the equality constraints, i.e., we define $h \colon \mathbb{R}^{n \times k} \to \operatorname{Sym}(k): X \mapsto h(X):=X^T X-I_k$; and $\mathcal{M} = \mathbb{R}^{n \times k}$, $ l = \operatorname{dim} \operatorname{Sym}(k) = k(k+1) / 2$ in (\ref{RCOP}).
	Then, when we apply EIPM, it requires us to solve (\ref{eq:CondensedEquation}) of order $n k + k(k+1) / 2$.
	On the other hand, if we apply RIPM to (\ref{pro:nonStProj}), then (\ref{eq:CondensedEquation}) reduces to (\ref{eq:OnlyInequality}) since there are only inequality constraints on $\mathcal{M} =\mathrm{St}(n, k)$.
	In this case, we solve the equation of order $n k-k(k+1) / 2$, i.e., the dimension of $\mathrm{St}(n, k)$.
	Compared to EIPM, using RIPM reduces our dimensionality by $k(k+1)$.
	\item RIPM can solve some problems that EIPM cannot. 
	For example, the problem I (\ref{pro:nlrm}) can be rewritten as
	\begin{equation*}
		\min _{X \in \mathbb{R}^{m \times n}} \|A-X\|_{\mathrm{F}}^2 \quad \text { s.t. } \operatorname{rank}(X)=r, \; X \geq 0.
	\end{equation*}
	Since the rank function, $X \mapsto \operatorname{rank}(X)$, is not even continuous, we cannot apply EIPM.
\end{enumerate}

\paragraph{Future Work I: Preconditioner for linear operator equation.}

With regard to the complementary condition, $S_{k}^{-1} Z_{k}$ values display a huge difference in magnitude as $k\to \infty$. 
The operator $\Theta:=G_x S^{-1} Z G_x^*$ causes the system (\ref{eq:CondensedEquation}) to be ill-conditioned, risking failure of the iterative method without preconditioning.
Matrix-decomposition-based preconditioner methods cannot be applied to a problem that does not have a matrix form. 
A possible alternative is to find a nonsingular $\mathcal{P}$ such that the condition number of $\mathcal{P}^{-1} \mathcal{T}$ is smaller.

\paragraph{Future Work II: Treatment of more state-of-the-art interior point methods.}

While we have considered interior point methods on a manifold for the first time, our Euclidean theoretic counterpart is an early nonlinear interior point method algorithm \cite{el1996formulation}; however, the counterpart now appears to be obsolete compared with more recent interior point methods.
For example, our method does not drive the iteration towards minimizers but only towards stationary points; globalization is done by monitoring only the KKT residuals; moreover, the boundedness assumption \ref{C2} of $\{z_{k}\}$ is too strong to hold in some simple cases (see W{\"a}chter-Biegler effect \cite{wachter2000failure}). 
It remains an important issue to adapt more modern interior point methods to manifolds, although we may encounter various difficulties in Riemannian geometry.

\smallskip

\noindent
\textbf{Acknowledgements}
We are deeply grateful to Hiroyuki Sato of Kyoto University for his valuable comments and suggestions. 
This work was supported by JST SPRING, Grant Number JPMJSP2124, and JSPS KAKENHI, Grant Numbers 19H02373 and 22K18866. 

\smallskip

\noindent
\textbf{Data availability}
The datasets generated during and/or analyzed during the current study are available from the corresponding author upon reasonable request.
%\end{acknowledgements}

%Old instruction:
%References
% BibTeX users  please use  \bibliographystyle{spmpsci_unsrt}. The option spmpsci_unsrt prints the references in old JOTA format  in the order they are cited.
%Otherwise, please use the following:

%\bibliographystyle{plain}
%\bibliography{../mybib}

%References

\end{document}